\documentclass[1p]{article}

\usepackage{amssymb}
\usepackage{amsthm}

\usepackage{amsmath}
\usepackage{mathtools}
\usepackage{float}
\usepackage{tikz}
\usetikzlibrary{arrows.meta}
\usetikzlibrary{decorations.markings}
\usepackage{pict2e} 

\usepackage{subcaption}
\usepackage[percent]{overpic}
\usetikzlibrary{shapes.geometric}

\usepackage{lipsum}
\usepackage{amsfonts}
\usepackage{epstopdf}
\usepackage{algorithmic}
\usepackage{hyperref}
\ifpdf
  \DeclareGraphicsExtensions{.eps,.pdf,.png,.jpg}
\else
  \DeclareGraphicsExtensions{.eps}
\fi

\usepackage{enumitem}
\setlist[enumerate]{leftmargin=.5in}
\setlist[itemize]{leftmargin=.5in}

\newtheorem{definition}{Definition}
\newtheorem{theorem}{Theorem}
\newtheorem{remark}{Remark}
\newtheorem{lemma}{Lemma}
\newtheorem{corollary}{Corollary}

\usepackage{amsopn}

\DeclareMathOperator{\interior}{int}
\DeclareMathOperator{\dom}{dom}

\begin{document}
\def\stablemanifold#1#2#3#4#5{\begin{scope}[thick,decoration={
    markings,
    mark=at position #4 with {\arrow{Latex[length=3mm]}},
    mark=at position #5 with {\arrowreversed{Latex[length=3mm]}}}
    ] 
    \draw[#3, postaction={decorate}] #1 -- #2;
\end{scope}}

\def\unstablemanifold#1#2#3#4#5{\begin{scope}[thick,decoration={
    markings,
    mark=at position #4 with {\arrowreversed{Latex[length=3mm]}},
    mark=at position #5 with {\arrow{Latex[length=3mm]}}}
    ] 
    \draw[#3, postaction={decorate}] #1 -- #2;
\end{scope}}

\def\stablemanifoldpositive#1#2#3#4#5{\begin{scope}[thick,decoration={
    markings,
    mark=at position #5 with {\arrowreversed{Latex[length=3mm]}}}
    ] 
    \draw[#3, postaction={decorate}] #1 -- #2;
\end{scope}}

\def\unstablemanifoldpositive#1#2#3#4#5{\begin{scope}[thick,decoration={
    markings,
    mark=at position #5 with {\arrow{Latex[length=3mm]}}}
    ] 
    \draw[#3, postaction={decorate}] #1 -- #2;
\end{scope}}

\def\hset#1#2#3#4#5{
    \draw[double=#5, thick, double distance=2pt] #1 -- #2;
    \draw[double=#5, thick, double distance=2pt] #3 -- #4;
    \draw[thick] #2 -- #3;
    \draw[thick] #1 -- #4;
}

\def\hsetrect#1#2#3#4#5{
    \hset{(#1,#3)}{(#1,#4)}{(#2,#4)}{(#2,#3)}{#5}
}

\def\hsetparallelogram#1#2#3#4#5{
    \hset
        {(#1*5/6-#3*1/6,#3*5/6-#1*1/6)}
        {(#1*5/6-#4*1/6,#4*5/6-#1*1/6)}
        {(#2*5/6-#4*1/6,#4*5/6-#2*1/6)}
        {(#2*5/6-#3*1/6,#3*5/6-#2*1/6)}{#5}
}

\begin{center}
{\Large \bf Oscillatory motion to collision and infinity in the Earth-Moon restricted three body problem}

 \vskip 0.5cm
{\large Aleksander Pasiut \footnote{The author has been partially supported by the NCN grant 2021/41/B/ST1/00407.}}

 \vskip 0.2cm
 { Faculty of Applied Mathematics,\\
 AGH University of Kraków}\\
{\small  Mickiewicza 30, 30-059 Krak\'ow, Poland} \\
e-mail: \texttt{pasiut@agh.edu.pl}

\vskip 0.5cm

 \today
\vskip 0.5cm

\end{center}

\begin{abstract}
    We present a computer assisted proof of the existence of the oscillatory motion to collision and to infinity in the Earth-Moon planar circular restricted three body problem. In particular, we show that all types of motion (hyperbolic, parabolic, oscillatory to infinity, oscillatory to collision and terminating in collision) can be combined. We also prove the existence of arbitrarily large ejection/collision orbits. The results are achieved with the use of the Levi-Civita regularization, the McGehee regularization, topological tools and rigorous numerical computations.
\end{abstract}

\vspace{0.3cm} \noindent {\bf Keywords:}  celestial mechanics, collisions, computer assisted proofs

\vspace{0.3cm} \noindent {\bf AMS classification:} 37C29, 37J46, 70F07

\vskip\baselineskip

\section{Introduction}
Planar circular restricted three body problem is a model of celestial mechanics, which describes the motion of a masless particle under the gravitational pull of two large masses, called primaries, which move on circular orbits on the same plane as the massless particle. In 1922, J. Chazy \cite{MR1509241} classified the possible final motions that a trajectory $q(t)$ of a massless body might have, when the time $t$ goes to positive or negative infinity as:
\begin{itemize}
    \item $\mathcal{O}_H^{\pm}$ (hyperbolic):
    \[
        r(t)\to\infty \quad\text{and}\quad \| \dot q(t)\| \to c>0 \quad\text{for}\quad t\to \pm \infty,
    \]
    \item $\mathcal{O}_P^{\pm}$ (parabolic):
    \[
        r(t)\to\infty \quad\text{and}\quad \| \dot q(t)\| \to 0 \quad\text{for}\quad t\to \pm \infty,
    \]
    \item $\mathcal{O}_B^{\pm}$ (bounded):
    \[
        \limsup_{t\to \pm \infty} r(t) < +\infty,
    \]
    \item $\mathcal{O}_{OS}^{\pm}$ (oscillating): 
    \[
        \limsup_{t \to \pm \infty} r(t)= + \infty \quad\text{and}\quad
        \liminf_{t \to \pm \infty} r(t) < \infty.
    \]
\end{itemize}
where we define $r(t)$ as the distance from $q(t)$ to the closest primary.

We extend the classification above by distinguishing the additional cases included in sets $\mathcal{O}_B^\pm$ and $\mathcal{O}_{OS}^\pm$ and considering the possibility of the ejection/collision:
\begin{itemize}
    \item $\mathcal{O}_{OI}^{\pm}$ (oscillating to infinity): 
    \[
        \limsup_{t \to \pm \infty} r(t)= + \infty \quad\text{and}\quad
        \liminf_{t \to \pm \infty} r(t) \in (0, +\infty),
    \]
    \item $\mathcal{O}_{OC}^{\pm}$ (oscillating to collision):
	\[
        \liminf_{t \to \pm \infty} r(t) = 0  \quad\text{and}\quad \limsup_{t \to \pm \infty} r(t) \in (0,+\infty),
    \]
    \item $\mathcal{O}_{OB}^{\pm}$ (oscillating to infinity and collision):
	\[
        \limsup_{t \to \pm \infty} r(t)= + \infty \quad\text{and}\quad \liminf_{t \to \pm \infty} r(t) = 0,
    \]
    \item $\mathcal{O}_A^{\pm}$ (bounded away from collision):
    \[
        \limsup_{t\to \pm \infty} r(t) \in (0,+\infty) \quad\text{and}\quad \liminf_{t \to \pm \infty} r(t) \in (0,+\infty),
    \]
    \item $\mathcal{O}_C^{\pm}$ (ejection/collision): 
    \[
        r(t)\to 0 \quad\text{for}\quad t\to t_c^{\pm},\quad\text{for some ejection/collision time}\quad t_c.
    \]
\end{itemize}

In this work, we present the computer-assisted proof that all of these types of motion co-exist and can be combined together. We consider the energy levels and parameter domains relevant to celestial mechanics, which are in the {\em non-perturbative} regime. Our result is formally stated in the following theorem.

\begin{theorem}
    \label{thm:primary_result}
    We consider planar circular restricted three body problem with the mass ratio parameter $\mu=1/82$, which is an approximation of the Earth-Moon system. Then,
    \begin{equation}
        \label{eq:main}
        \mathcal{O}_X^- \cap \mathcal{O}_Y^+ \ne \emptyset,
    \end{equation}
    for $X,Y \in \{H,P,OI,OC,OB,B,C\}$. 

    In particular, we prove that for every real $\varepsilon, \varepsilon' > 0$, there exists a periodic orbit satisfying:
    \[
        \max_{t\in \mathbb{R}} r(t) > \varepsilon \qquad \mbox{and} \qquad \min_{t\in \mathbb{R}} r(t) < \varepsilon'
    \]
    and there exists an ejection/collision orbit satisfying:
    \[
        \max_{t \in [t_e,t_c]} r(t) > \varepsilon,
    \]
    where $t_e,t_c \in \mathbb{R}$ are the ejection time and the collision time respectively.
\end{theorem}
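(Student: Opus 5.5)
We will obtain the result by symbolic dynamics built from covering relations (h-sets) in two regularized settings, verified by rigorous numerics. Near infinity we use McGehee coordinates $x = 2/\sqrt{r}$ (with polar angle and radial/angular momenta) on a fixed Jacobi energy level. In these coordinates the ``parabolic'' set $\{x=0\}$ is a degenerate invariant manifold carrying a periodic orbit $\Lambda_\infty$, whose stable and unstable manifolds consist of the parabolic orbits $\mathcal{O}_P^\pm$. Near the primary (we pick the Earth, or the Moon, whichever gives a better numerical margin) we use Levi-Civita coordinates $z^2 = q - q_{\text{primary}}$ with a time rescaling. In these coordinates collision becomes a regular invariant circle $\{z=0\}$ on the energy level, and orbits crossing it transversally correspond to ejection/collision. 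The plan is to construct a chain of h-sets $N_0,\dots,N_k$ in a Poincar\'e section, together with two \emph{special} sets: $N_\infty$, adjacent to the parabolic manifold, and $N_c$, adjacent to the collision manifold. We then prove covering relations $N_i \Rightarrow N_j$ along a transition graph. For $N_\infty$ we use the known topological theory near a degenerate (parabolic) point, in which coverings hold with only weak, polynomial expansion. For $N_c$ we use covering relations that cross $\{z=0\}$ in the regularized flow.

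The key steps, in order, are as follows.

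\begin{enumerate}
\item Numerically locate a homoclinic-type loop: a trajectory that leaves a neighbourhood of infinity, passes close to the chosen primary (including a branch through the collision circle in Levi-Civita variables), and returns near infinity. Also locate an intermediate ``bounded'' return loop that stays at moderate $r$.
\item Cover these loops by h-sets and verify, with interval arithmetic (CAPD-type $C^0$/$C^1$ integration of the regularized vector fields and the coordinate changes between McGehee, synodic and Levi-Civita charts), the covering relations $N_\infty \Rightarrow N_1 \Rightarrow \dots \Rightarrow N_c \Rightarrow \dots \Rightarrow N_\infty$. Include a self-loop $N_b \Rightarrow N_b$ for the bounded region.
\item Near infinity, use the Moser/McGehee-type analysis of the parabolic cylinder to show the following. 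For any $n$, some orbit from the exit set of $N_\infty$ reaches $x< 1/n$ and comes back. Also, orbits landing on $W^s(\Lambda_\infty)$ or in $\{\dot r>0,\ \text{hyperbolic energy}\}$ escape with $\mathcal{O}_P^+$ or $\mathcal{O}_H^+$ behaviour. This gives ``terminal'' symbols $P,H$ at infinity.
\item Near collision, h-sets arranged on both sides of $\{z=0\}$ give the following. For any $n$, an orbit passes within $1/n$ of the primary without colliding. Meanwhile an h-set whose image is stretched across the collision circle yields orbits hitting $z=0$, i.e.\ the terminal symbol $C$.
\end{enumerate}

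Given the covering graph, the standard theorem on chains of covering relations produces, for every admissible bi-infinite (or one-sided terminated) symbol sequence, an orbit realizing it. Choosing sequences appropriately then gives every pair $(X,Y)$. We prescribe only the past and the future tails: visits to infinity with depths $\to 0$ in $x$ give $\limsup r=\infty$; visits to collision with depths $\to 0$ give $\liminf r = 0$; staying in the bounded loop gives $\mathcal{O}_B$/$\mathcal{O}_A$ behaviour. For $OI$ we must ensure $\liminf r>0$, which we get by not using the near-collision passages infinitely often, since the remaining h-sets are bounded away from the primaries. Symmetric statements for $t\to-\infty$ follow from the reversibility symmetry $(x,y,t)\mapsto(x,-y,-t)$ of the PCR3BP, so only forward coverings need checking. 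Periodic orbits with $\max r>\varepsilon$ and $\min r<\varepsilon'$ come from periodic symbol sequences that contain one deep infinity excursion and one deep collision passage, via the fixed-point version of the covering theorem. Large ejection–collision orbits come from finite sequences $C\,\cdots\,(\text{deep infinity})\,\cdots\,C$, started and ended on the collision circle; reversibility again reduces this to a one-sided construction.

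The main obstacle will be step 3: rigorously establishing covering relations near the degenerate parabolic set at $x=0$, where the flow has no hyperbolicity and the McGehee vector field vanishes to higher order. We would first try to use an explicit isolating block / cone-condition argument. In it, on a thin wedge $0<x<\delta$ the sign of $\dot{(\dot r)}$ and the monotonicity of $x$ are estimated analytically via bounds on the expansion of the potential in $x$. The numerics are then used only to connect the boundary of this wedge to the rest of the chain. A secondary difficulty is controlling the Levi-Civita chart transitions with enough precision on the $\mu=1/82$ energy level.
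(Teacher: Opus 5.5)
There is a genuine gap in your Step 4, and it is the step on which $\mathcal{O}_{OC}$, $\mathcal{O}_{OB}$ and the bound $\min r<\varepsilon'$ all rest.

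Chains of coverings through finitely many fixed-size h-sets cannot force $\liminf r=0$. They also cannot rule out collision inside an h-set that the collision set crosses. ``H-sets arranged on both sides of $\{z=0\}$'' is still a finite, fixed family.

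What you need is an infinite family of h-sets that are disjoint from the collision set but accumulate on it, each joined to the rest of the graph by coverings in both directions. These sets must shrink, so the coverings into and out of them need contraction and expansion rates that grow without bound. For $\mathcal{O}_{OC}$ those rates must be produced while $r$ stays bounded, so deep excursions to infinity cannot supply them.

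Your graph has no such source. The self-loop $N_b\Rightarrow N_b$ sits at moderate $r$ and is not tied to collision. The only route through $N_c$ passes through infinity.

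Two smaller points:
\begin{itemize}
\item In Levi-Civita variables the collision circle is not invariant. Orbits cross it with velocity $(p_u,p_v)\neq 0$.
\item Every return $N_\infty\to N_\infty$ in your graph goes through $N_c$, which contains the collision set. So for $\mathcal{O}_{OI}$ you must split $N_c$ and rigorously check that the connecting orbits avoid collision. The paper does this by verifying $\mathcal{C}\neq 0$ on the rigorous orbit enclosures.
\end{itemize}

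The missing idea is a hyperbolic invariant object of the regularized flow, at bounded $r$, that actually meets the collision set. The paper uses the ejection--collision orbit at the end of the $L_1$ Lyapunov family, at energy $h_0$. In Levi-Civita coordinates this is a hyperbolic periodic orbit.
\begin{itemize}
\item On $\Sigma_0$, through its collision point $w_0$, the chart $\psi_0$ makes the collision set the line $z_1+z_2=0$. This line is both a horizontal and a vertical disc in $\mathcal{N}_0$, which gives $\mathcal{O}_C$ via Theorem~\ref{thm:cover-discs}.
\item Shadowing this orbit $k$ times yields the h-sets $\mathcal{R}_j$ and the self-$S$-symmetric $\mathcal{Q}_{4k}$. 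These avoid that line but converge to $w_0$, with the required rates coming from the hyperbolicity.
\item The homoclinic orbit of the ejection--collision orbit gives the bounded returns.
\item Instead of a homoclinic to infinity, the paper uses a heteroclinic from this orbit to the parabolic point, $\mathcal{N}_2\Rightarrow\mathcal{N}_{19}\Rightarrow\cdots\Rightarrow\mathcal{N}_{29}\Rightarrow\mathcal{M}_0$.
\item Every loop is closed by reversibility. This works only because the h-sets where forward and reflected chains are glued ($\mathcal{N}_2$, $\mathcal{W}_n$, $\mathcal{Q}_{4k}$) are self-$S$-symmetric, a requirement your ``only forward coverings need checking'' leaves implicit.
\end{itemize}

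Your Step~3 is in line with the paper. The paper extracts the cubic-order dynamics at the parabolic point with the auxiliary variables of Theorem~\ref{thm:exp_trick}. It then builds the shrinking sets $\mathcal{M}_n$ and $\mathcal{W}_n$ analytically via Lemma~\ref{lem:u_sequence}.
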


The proof relies on the topological tools such as the covering relations and rigorous numerical computations performed with the use of interval arithmetic. We define the suitable h-sets which are connected by the sequences of the covering relations, which imply the existence of the orbits in question.

Our construction can be briefly descibed in the following way. We distinguish four h-sets:
\begin{itemize}
    \item $\mathcal{N}_0$ - the h-set for which the collision manifold is both a horizonal and a vertical disc,
    \item $\mathcal{N}_2$ - the h-set which is located away from the collision,
    \item $\mathcal{Q}_k$ - the sequence of h-sets which does not intersect with the collision manifold, but which approaches arbitrarily close to it for the integer $k$ going to infinity,
    \item $\mathcal{W}_n$ - the sequence of h-sets which gets arbitrarily far from the primaries for the integer $n$ going to infinity.
\end{itemize}
Then, we construct the sequences of the covering relations between these sets in a way presented in the figures \ref{fig:covering_relations_overview} and \ref{fig:covering_relations_overview_detailed}.
\begin{figure}[ht]
    \centering
    \begin{tikzpicture}
    \draw[-Latex, thick] (-2.8,0.4) .. controls (-1.5,1.3) and (0,1.3) .. (0.8,0.5);
    \draw[-Latex, thick] (-0.8,0.3) .. controls (-0.3, 0.5) and (0.2,0.5) .. (0.6,0.3);
    \draw[-Latex, thick] (1.3,0.3) .. controls (1.7, 0.5) and (2.2,0.5) .. (2.6,0.3);
    \draw[Latex-, thick] (-2.8,-0.4) .. controls (-1.5,-1.3) and (0,-1.3) .. (0.8,-0.5);
    \draw[Latex-, thick] (-0.8,-0.3) .. controls (-0.3, -0.5) and (0.2,-0.5) .. (0.6,-0.3);
    \draw[Latex-, thick] (1.3,-0.3) .. controls (1.7, -0.5) and (2.2,-0.5) .. (2.6,-0.3);
    \node[anchor=center] at (-3,0) {$\mathcal{N}_0$};
    \node[anchor=center] at (-1,0) {$\mathcal{Q}_k$};
    \node[anchor=center] at (1,0) {$\mathcal{N}_2$};
    \node[anchor=center] at (3,0) {$\mathcal{W}_n$};
    \end{tikzpicture}
    \vspace{8pt}
    \caption{The diagram which depicts the configuration of the covering relations between the h-sets $\mathcal{N}_0$, $\mathcal{N}_2$, $\mathcal{Q}_k$ and $\mathcal{W}_n$.}
    \label{fig:covering_relations_overview}
\end{figure}

\begin{figure}[ht]
    \centering
    \begin{tikzpicture}
    \draw[-Latex, thick] (-3.8,0.4) .. controls (-3.0,1.3) and (-2.5,1.3) .. (-1.7,0.5);
    \draw[Latex-, thick] (-3.8,-0.4) .. controls (-3.0,-1.3) and (-2.5,-1.3) .. (-1.7,-0.5);
    \draw[-Latex, thick] (-2.65,0.5) .. controls (-2.0,1.5) and (-1.0,1.5) .. (-0.3,0.9);
    \draw[Latex-, thick] (-2.65,-0.5) .. controls (-2.0,-1.5) and (-1.0,-1.5) .. (-0.3,-1.0);
    \draw[-Latex, thick] (-1.2,0.3) .. controls (-0.8, 0.5) .. (-0.3,0.7);
    \draw[Latex-, thick] (-1.2,-0.3) .. controls (-0.8, -0.5) .. (-0.4,-0.7);
    \draw[-Latex, thick] (0.3,0.7) .. controls (0.8, 0.5) .. (1.2,0.3);
    \draw[Latex-, thick] (0.4,-0.7) .. controls (0.8, -0.5) .. (1.2,-0.3);
    \draw[-Latex, thick] (3.0,1.3) .. controls (3.3,1.0) .. (3.6,0.5);
    \draw[Latex-, thick] (3.0,-1.3) .. controls (3.3,-1.0) .. (3.6,-0.5);
    \draw[-Latex, thick] (1.4,1.9) .. controls (1.9,1.85) .. (2.4,1.65);
    \draw[Latex-, thick] (1.4,-1.9) .. controls (1.9,-1.85) .. (2.2,-1.7);
    \draw[-Latex, thick] (-1.4,0.5) .. controls (-0.7,1.7) and (0.0,1.85)  .. (0.6,1.9);
    \draw[Latex-, thick] (-1.4,-0.5) .. controls (-0.7,-1.7) and (0.4,-1.85)  .. (0.4,-1.9);
    \node[draw, diamond, aspect=1, inner sep=1pt] at (-4.0,0) {$\mathcal{N}_0$};
    \node[draw, diamond, aspect=1, inner sep=1pt] at (-2.75,0) {$\mathcal{Q}_k$};
    \node[draw, diamond, aspect=1, inner sep=1pt] at (-1.5,0) {$\mathcal{N}_2$};
    \node[anchor=center] at (0,0.75) {$\mathcal{N}_4$};
    \node[anchor=center] at (0,-0.75) {$S\mathcal{N}_4$};
    \node[anchor=center] at (1.0,1.9) {$\mathcal{N}_{19}$};
    \node[anchor=center] at (0.9,-1.9) {$S\mathcal{N}_{19}$};
    \node[draw, diamond, aspect=1, inner sep=1pt] at (1.5,0) {$\mathcal{N}_{18}$};
    \node[anchor=center] at (2.7,1.5) {$\mathcal{M}_0$};
    \node[anchor=center] at (2.7,-1.5) {$\mathcal{SM}_0$};
    \node[draw, diamond, aspect=1, inner sep=1pt] at (3.7,0) {$\mathcal{W}_n$};
    \end{tikzpicture}
    \vspace{8pt}
    \caption{Extension of the diagram from the figure \ref{fig:covering_relations_overview} including the auxiliary h-sets which are present in our construction. Symmetric sets are marked with diamonds.}
    \label{fig:covering_relations_overview_detailed}
\end{figure}

The information about where to locate our h-sets is obtained by computing the coordinates of the points at which the stable and unstable manifolds of the relevant invariant sets intersect. This calculation is performed with the nonrigorous numerical computations, but it provides sufficient accuracy to enable us to specify the coordinates of the h-sets which are then used in the rigorous computations.

This work is the direct extension of the results published in \cite{CAPINSKI2026109173}, where the analogous proof was provided, but only for the existence of the oscillatory motion to collision in the Earth-Moon system.

Our result is related to a number of previous works concerning collisions, near collision orbits and the oscillatory motion. In 1960 Sitnikov \cite{MR0127389} constructed oscillatory motion to infinity for a particular case of a symmetric restricted spacial three body problem we nowadys refer to as the Sitnikov model. His results were later improved by Alexeev \cite{VMAlekseev_1976} and Moser \cite{Moser01}, who used the intersections of stable/unstable manifolds to infinity.

In the planar setting, the first result was obtained by Llibre and Sim\'o \cite{SimoL80} who proved the existence of the oscillatory motion to infinity for the sufficiently small mass ratio $\mu$. Their result was extended by Xia \cite{XIA1992170} for the range of all $\mu \in (0,1/2]$ except for the finite set of values and then extended further by Guardia, Martin and Seara \cite{GuardiaMS16} for the complete range of $\mu \in (0,1/2]$. We notice that all these results \cite{SimoL80,XIA1992170,GuardiaMS16} assume that the value of the Jacobi constant is sufficiently large.

Oscillatory motion to infinity in the setup with the realistic mass ratio and energy level parameters was achieved with the computer-assisted proof by Capiński, Guardia, Martin, Seara and Zgliczyński \cite{MR4391693}. The authors considered the planar circular restricted three body problem as an approximation of the Sun-Jupiter system with the Jacobi constant allowing to cross the Jupiter's orbit.

In the context of the evolution to collision in the planar circular restricted three body problem, the first results were presented by Llibre and Lacomba \cite{llibre1982,LACOMBA198869} who proved the existence of the transversal ejection/collision orbits for the sufficiently small mass ratio $\mu$ and sufficiently large value of the Jacobi constant.

The consecutive close encounters with the small primary were the subject of the study by Font, Nunes, and Simó \cite{MR1877971,MR2475705}, who proved the existence of the chaotic orbits, which approach the collision infinitely many times. The mass ratio $\mu$ is treated as the perturbation parameter and with the use of numerical computations, the authors prove the applicability of their method for $\mu$ up to $10^{-3}$.

In \cite{MR0967629} Chenciner and Llibre used KAM arguments combined with the Levi-Civita coordinates to prove that the collision circle intersects transversally with an invariant tori. Their results were later extended in \cite{OLLE2018298,OLLE2020105294,seara2022}, where the authors introduced and explored the concept of the $n$-ejection-collision orbits.

Regarding the target to combine all types of motion we mention the work of Moeckel \cite{MR2350333} who has proven the existence of oscillatory motions and close to collision orbits via symbolic dynamics for the general planar three body problem by relying on triple collision approaches.

Significant progress has been achieved recently with the results from Guardia, Lamas and Seara \cite{MR4890942}, who proved that all types of motion (hyperbolic, parabolic, oscillatory to collision and oscillatory to inifinity) in the planar circular restricted three body problem can be combined. The proof is computer-assisted and it assumes sufficiently small mass ratio $\mu$ and the energy value sufficiently close to zero.

Our work is organized as follows. In section \ref{sec:preliminaries} we cover the preliminaries, which includes the definition of the planar circular restricted three body problem, the Levi-Civita coordinates, the McGehee coordinates, interval Newton method and the covering relations. We also recall the key constructions and results from \cite{CAPINSKI2026109173} which sets up the notation and serves as a baseline for the new developments. In the subsection \ref{subsec:poincare_sections} we provide the description of how we set up Poincar\'e sections and covering relations on a constant energy level with an emphasis on the utilization of the time reversal symmetry of the system. Then, we discuss the properties of the ejection/collision orbit, its homoclinic and the related covering relations in the subsection \ref{subsec:EC_orbit_and_homoclinic}. In section \ref{sec:main_result} we present the main results. We discuss the construction of the covering relations located along the heteroclinic orbit from collision to infinity in subsection \ref{subsec:covrel_to_infinity}. We construct the covering relations in the neighborhood of infinity in subsection \ref{subsec:covrel_at_infinity}. We conclude our work with the proof of Theorem \ref{thm:primary_result} in subsection \ref{subsec:main_proof}.

\section{Preliminaries}
\label{sec:preliminaries}
\subsection{The planar circular restricted 3-body problem\label{subsec:PCR3BP}}
In this work we consider the same setup as in \cite{CAPINSKI2026109173}. The planar circular restricted three body problem (PCR3BP) is a celestial mechanics model where a massless test particle moves under a gravitational influence of the two point masses called primaries, which are moving along the circular orbits around the center of the mass of the system. 

We select the coordinate system whose origin coindices with the center of mass of the primaries and which is co-rotating with them, so that the masses become stationary at points $(x_1, 0)$ and $(x_2, 0)$, respectively. We assume the units of time, mass and distance such that $\mu_2  = x_1$, $\mu_1 = -x_2$ and $\mu_1 + \mu_2 = 1$. The Hamiltonian which describes the dynamics of the test particle in such setting takes form:
\begin{equation}
    H(x,y,p_{x},p_{y})=\frac{1}{2}(p_{x}^{2}+p_{y}^{2})+yp_{x}-xp_{y}-\sum_{i\in\{1,2\}}\mu_{i}\big((x-x_{i})^{2}+y^{2}\big)^{-\frac{1}{2}},
\end{equation}
where $x$,$y$ are the position coordinates of the test particle and $p_{x}$,$p_{y}$ are their associated momenta.

Movement of the test particle is determined by the Hamilton equation:
\begin{equation}
    \label{eq:std_ode}
    \frac{d}{dt}(x,y,p_{x},p_{y})=J\nabla H(x,y,p_{x},p_{y}),
\end{equation}
where $t$ is time and 
\begin{equation}
    \label{eq:J_matrix_def}
    J=\begin{bmatrix}0 & I_{2}\\-I_{2} & 0 \end{bmatrix},\quad
    I_{2}=\begin{bmatrix}1 & 0\\0 & 1\end{bmatrix}.
\end{equation}
We denote the flow induced by the differential equation (\ref{eq:std_ode}) by $\Phi_t$ and we notice that the system features a time reversal symmetry with respect to the function $S$ which is defined by:
\begin{equation}
    \label{eq:S_symmetry}
    S(q_1, q_2, q_3, q_4) = (q_1, -q_2, -q_3, q_4).
\end{equation}
We recall that the time reversal symmetry is defined by the relationship:
\[
    \Phi_{-t} (S(q)) = S( \Phi_t (q) ) \quad\text{for}\quad (t,q) \in \dom \Phi.
\]

The Hamiltonian has singularities for $(x,y)$ equal to $(x_1, 0)$ or $(x_2, 0)$. We interpret them as the collisions with the first or the second primary mass respectively. In order to be able to compute the trajectories which pass through these singularities, it is necessary to regularize the system. We achieve it by applying the Levi-Civita coordinates \cite{levi_civita_1906} which consists in the suitable phase space coordinate change and the time coordinate change.

We introduce a coordinate change $(x,y,p_x,p_y)=\gamma_i (u,v,p_u,p_v)$ such that:
\begin{equation}
    \label{eq:lc_coord_change}
    \begin{aligned}
        x & =u^{2}-v^{2}+x_{i},\\
        y & =2uv,\\
        p_{x} & =\frac{1}{2}\frac{up_{u}-vp_{v}}{u^{2}+v^{2}},\\
        p_{y} & =\frac{1}{2}\frac{vp_{u}+up_{v}}{u^{2}+v^{2}},
    \end{aligned}
\end{equation}
where $i \in \{ 1, 2 \}$ is fixed and we introduce new time coordinate $s$ such that:
\begin{equation}
    \label{eq:reg_time_def}
    \frac{dt}{ds}(s)=4\big(u^{2}(s)+v^{2}(s)\big).
\end{equation}
The value $i$ specifies which primary mass we regularize (the trajectories which pass through the collision with this mass shall no longer have the singularity).

We define Hamiltonian $\Gamma_{i,h}$ with $h$ being a fixed real parameter, by
\begin{equation}
    \label{eq:Gamma_i}
    \Gamma_{i,h} (u,v,p_u,p_v) = 4(u^2 + v^2) \left( H(\gamma_i(u,v,p_u,p_v)) - h \right).
\end{equation}

Movement of the test particle in coordinates $(u,v,p_u,p_v)$ with respect to the time coordinate $s$ is determined by the Hamilton equation:
\begin{equation}
    \label{eq:reg_ode}
    \frac{d}{ds}(u,v,p_{u},p_{v})=J\nabla \Gamma_{i,h} (u,v,p_{u},p_{v}),
\end{equation}
under the assumption that
\begin{equation}
    \label{eq:gamma_zero_condition}
    \Gamma_{i,h}(w_0) = 0,\quad \text{(or equivalently }h = H(\gamma_i(w_0))),
\end{equation}
where $w_0 = (u_0, v_0, p_{u,0}, p_{v,0})$ is the initial condition of the equation (\ref{eq:reg_ode}). The value $h$ can be interpreted as the energy level for which we perform the regularization.

We denote the flow induced by the differential equation (\ref{eq:reg_ode}) by $\Phi_s^{i,h}$ and we notice that it also has the time reversal symmetry with respect to the function $S$ defined in (\ref{eq:S_symmetry}).

The relationship between the flow $\Phi_s^{i,h}$ and the flow $\Phi_t$ is:
\[
    \Phi_{s}^{i,h} (w) = \Phi_{t(s)} (\gamma_i(w)),
\]
where $t(s)$ is integrated according to the formula (\ref{eq:reg_time_def}), with the initial condition $t(0) = 0$.

The position of the regularized mass (specified by the index $i$) in the new coordinates is $(u,v) = (0,0)$. If we combine this fact with the condition (\ref{eq:gamma_zero_condition}) we reach the conclusion, that the collision with the regularized mass must satisfy:
\begin{equation}
    p_{u}^{2}+p_{v}^{2}=8\mu_{i}, \label{eq:collision-circle}
\end{equation}
regardless of the selected energy level $h$.

\subsection{McGehee coordinates}
In this section we present the coordinate change of the planar circular restricted three body problem, which allows for the numerical computation of the trajectories, which are at the arbitrarily large distance from the barycentre of the system, including the limit case (i.e. infinite distance). Originally, these coordinates have been introduced in \cite{MCGEHEE197370}. The transformation presented in this chapter includes an additional step (we follow the reasoning from \cite{CAPINSKI2022316}), which provides a good alignment of the coordinate directions with the dynamics of the system.

The first step is to express the Hamiltonian $H$ in the polar coordinates $(q_r,\theta,p_r,p_\theta)$, where
\begin{equation*}
    \begin{split}
        x &= q_r \cos(\theta), \\
        y &= q_r \sin(\theta), \\
        p_x &= \cos(\theta) p_r - p_\theta \sin(\theta) / q_r, \\
        p_y &= \sin(\theta) p_r + p_\theta \cos(\theta) / q_r. \\
    \end{split}
\end{equation*}
Coordinate change is canonical and the Hamiltonian $H$ takes form:
\[
    H = \frac{1}{2} \left(p_r^2 + p_\theta^2/q_r^2 \right) - p_\theta - \sum_{k=1,2} \mu_k \left( q_r^2 - 2x_k q_r \cos(\theta) + x_k^2 \right)^{-1/2}.
\]
The next step is to replace the coordinate $q_r$, with the coordinate $\chi$, which we define by the equation $q_r = 2 \chi^{-2}$. The equations of motion induced by the Hamiltonian $H$, in the coordinates $(\chi, \theta, p_r, p_\theta)$, which we call McGehee coordinates, take form: 
\begin{equation}
    \label{eq:vector_field_mcgehee_short}
    \begin{split}
        \dot{\chi} &= -\frac{1}{4} \chi^3 p_r, \\
        \dot{\theta} &= \frac{1}{4} \chi^4 p_\theta - 1, \\
        \dot{p}_r &=
            -\frac{1}{4} \chi^4 -\frac{1}{4} \chi^4 f_1(\chi, \theta) + \frac{1}{8} \chi^6 \left( p_\theta^2 + \cos(\theta) f_2(\chi, \theta) \right), \\
        \dot{p}_\theta &= - \frac{1}{4} \chi^4 \sin(\theta) f_2 (\chi, \theta), \\
    \end{split}
\end{equation}
where we define functions $f_1$ and $f_2$ by:
\begin{equation}
    1 + f_1 (\chi, \theta) = \sum_{k=1,2} \mu_k \left(1 - x_k \chi^2 \cos(\theta) + \frac{1}{4} x_k^2 \chi^4 \right)^{-3/2},
\end{equation}
\begin{equation}
    f_2 (\chi, \theta) = \sum_{k=1,2} \mu_k x_k \left(1 - x_k \chi^2 \cos(\theta) + \frac{1}{4} x_k^2 \chi^4 \right)^{-3/2}.
\end{equation}
\begin{remark}
    Functions $f_1$ and $f_2$ are $2\pi$-periodic with respect to the coordinate $\theta$. On the other hand, due to the equation
    \[
        \left(1 - x_k \cos(\theta) \chi^2 + \frac{1}{4} x_k^2 \chi^4 \right)^{-3/2} = 1 + \frac{3}{2} x_k \cos(\theta) \chi^2 + O(\chi^4)
    \]
    (which is the direct consequence of the Taylor expansion with respect to the variable $\chi^2$) and
    \[
        \mu_1 x_1 + \mu_2 x_2 = 0,
    \]
    we can determine the asymptotic behavior of both functions for $\chi$ going to $0$:
    \[
        f_1 (\chi,\theta) = O(\chi^4) \quad\text{and}\quad
        f_2 (\chi,\theta) = \frac{3}{2} \cos(\theta) \mu_1 \mu_2 \chi^2 + O(\chi^4).
    \]
\end{remark}

The function $H$ expressed in the McGehee coordinates takes form:
\begin{equation}
\label{eq:Hamiltonian_in_McGehee_coordinates}
    H = \frac{1}{8} p_\theta^2 \chi^4 - p_\theta + R_H \left( \chi, \theta, p_r \right),
\end{equation}
where
\[
    R_H \left( \chi, \theta, p_r \right) = \frac{1}{2} p_r^2 - \sum_{k=1,2} \mu_k \chi^2 \left( 4 - 4 \chi^2 x_k  \cos(\theta) + \chi^4 x_k^2 \right)^{-1/2}.
\]

Due to the fact that the coordinate change from $(q_r, \theta, p_r, p_\theta)$ to $(\chi, \theta, p_r, p_\theta)$ is not canonical, the function $H$ does not give rise to the system of ordinary differential equations (\ref{eq:vector_field_mcgehee_short}) according to the Hamilton equation of motion.

On the other hand, the value of the function $H$ remains constant along the solution of the system of equations (\ref{eq:vector_field_mcgehee_short}). For the fixed energy level $h_0$ it is possible to compute the coordinate $p_\theta$, according to the formula:
\begin{equation}
    \label{eq:E_H_solution}
    p_\theta = E_H (h_0, \chi, \theta, p_r) = \frac{2\left(R_H \left( \chi, \theta, p_r \right)- h_0 \right)}{ 1 + \sqrt{ 1 - \chi^4 \left(R_H \left( \chi, \theta, p_r \right) - h_0 \right) / 2 } }.
\end{equation}
We shall make use of this fact later in order to parameterize the Poincar\'e section restricted to the constant energy level in the McGehee coordinates.

\begin{remark}
    \label{rem:E_H_even}
    The function $E_H$ is even with respect to the variable $p_r$, i.e.
    \[
        E_H (h_0, \chi, \theta, p_r) = E_H (h_0, \chi, \theta, -p_r).
    \]
\end{remark}

Similarily as in the case of the Levi-Civita coordinates, each point in the original coordinates corresponds to two points in the McGehee coordinates. The equation (\ref{eq:vector_field_mcgehee_short}) features time-reversal symmetry with respect to the function $S$ defined in (\ref{eq:S_symmetry}).

For each pair of values $p_r, p_\theta \in \mathbb{R}$, the set $\{ (\chi, \theta, p_r, p_\theta) : \chi = 0, \theta \in [0, 2\pi) \}$ is an invariant set for the dynamical system induced by the system of the differential equations (\ref{eq:vector_field_mcgehee_short}).

Stable and unstable sets of these invariants contain the parabolic (for $p_r = 0$) and hyperbolic (for $p_r \neq 0$) orbits of the planar circular restricted three body problem (see Figure \ref{fig:mcgehee_dynamics} for the illustration of the dynamics in the neighborhood of the parabolic limit point). This is implied by the fact that the radial velocity $\dot{q}_r$ expressed in the McGehee coordinates takes form $\dot{q}_r = -4\chi^{-3} \dot{\chi} = p_r$, which means, that the investigation of the sign of coordinate $p_r$ is sufficient to determine the classification of the orbit which goes to infinity.

\begin{figure}[h!]
\begin{center}
	\begin{overpic}[width=10cm]{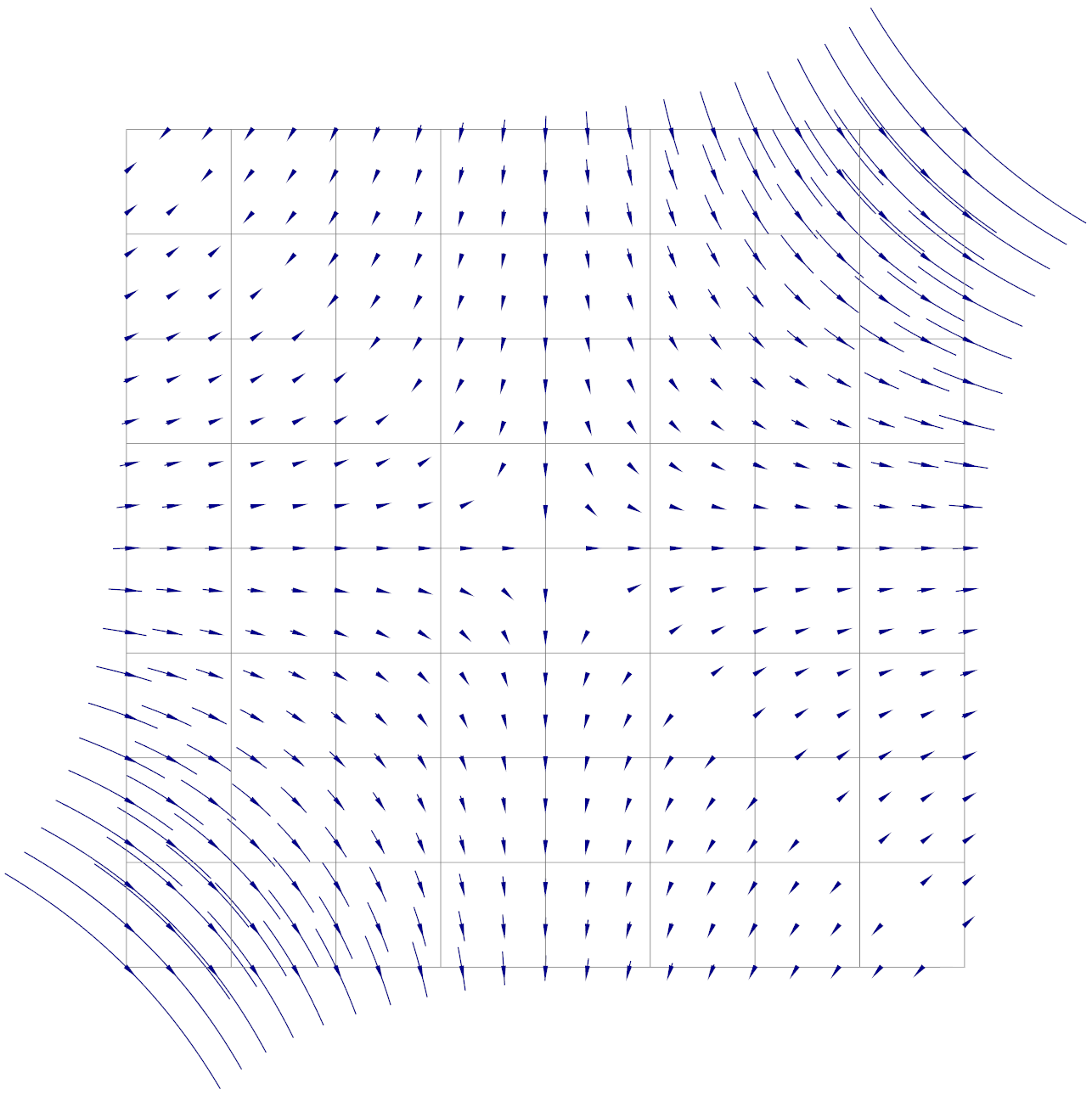}
        \dashline{2}(7.7,92)(92,7.8)
        \put (8,85) {$\zeta$}
		\put (49,7) {$0$}
		\put (-12,76) {$\lim_{t \to +\infty} \mathcal{O}_H^+$}
		\put (-12,66) {$\lim_{t \to \pm\infty} \mathcal{O}_P^\pm$}
		\put (-12,56) {$\lim_{t \to -\infty} \mathcal{O}_H^-$}
		\put (8,49) {$0$}
		\put (85,7) {$\xi$}
		\put (9,94) {\makebox(0,0){\rotatebox{-45}{$\chi = 0$}}}

        \linethickness{1pt}

        \put(8,76.6){\vector(2.5,-1){25}}

        \put(8,66.6){\vector(2.5,-1){42}}

        \put(8,56.6){\vector(2.5,-1){58.5}}

	\end{overpic}
	\caption{Dynamics of the PCR3BP in the coordinates $(\xi, \zeta, \theta, p_\theta)$ obtained with the use of approximate numerical computations. The origin of the coordinate system corresponds to the invariant set, for which the stable and unstable sets contain the parabolic orbits ($\mathcal{O}_P^\pm$). Intersection of the invariant manifold $\chi = 0$ (the diagonal marked with the dashed line) and the second and the fourth quarters of the coordinate system corresponds with the limit sets of the hyperbolic orbits $\mathcal{O}_H^+$ and $\mathcal{O}_H^-$ respectively.}
    \label{fig:mcgehee_dynamics}
\end{center}
\end{figure}

We introduce an additional coordinate change from $(\chi, \theta, p_r, p_\theta)$ to the coordinates $(\xi, \zeta, \theta, p_\theta)$ which are defined by the relationships:
\[
    \chi = \frac{1}{2} (\zeta + \xi), \quad p_r = \frac{1}{2} (\zeta - \xi).
\]
Such operation leads to the setup where the suitable projections of the stable and unstable directions of the manifold $(\xi,\eta) = (0,0)$ coincide approximately with the vectors $(1,0,0,0)$ and $(0,1,0,0)$ of the introduced coordinate system.

The vector field (\ref{eq:vector_field_mcgehee_short}) expressed in the coordinate system $(\xi, \zeta, \theta, p_\theta)$ takes form (we skip the arguments of functions  $f_2$ and $f_3$ to simplify the notation):
\begin{equation}
    \label{eq:vector_field_aligned}
    \begin{split}
        \dot{\xi} &= \frac{1}{32} (\zeta + \xi)^3 \left( \xi + \frac{1}{2} (\zeta + \xi) {f_3} \right), \\
        \dot{\zeta} &= -\frac{1}{32} (\zeta + \xi)^3 \left( \zeta + \frac{1}{2} (\zeta + \xi) {f_3} \right), \\
        \dot{\theta} &= \frac{1}{64} (\zeta + \xi)^4 p_\theta - 1, \\
        \dot{p}_\theta &= - \frac{1}{64} (\zeta + \xi)^4 \sin(\theta) f_2, \\
    \end{split}
\end{equation}
where the function $f_3$ is defined by:
\begin{equation}
    f_3 (\chi, \theta, p_\theta) = f_1(\chi, \theta) - \frac{1}{2} \chi^2 \left( p_\theta^2 + \cos(\theta) f_2(\chi, \theta) \right).
\end{equation}
\begin{remark}
    We notice that $f_3 = O (\chi^2)$ 
\end{remark}

We denote the coordinate change which is the composition of all three transformations described in this section as $\hat{\gamma}$, i.e.:
\begin{equation}
    \label{eq:hat_gamma}
    (\xi, \zeta, \theta, p_\theta) = \hat{\gamma} (x,y,p_x,p_y).
\end{equation}

The planar circular restricted three body problem expressed in the coordinates $(\xi, \zeta, \theta, p_\theta)$ has a time reversal symmetry with respect to function $S'$ defined by:
\begin{equation}
    \label{eq:Sp_symmetry}
    S'(x_1, x_2, x_3, x_4) = (x_2, x_1, -x_3, x_4).
\end{equation}

\subsection{Interval arithmetics}

Our work depends heavily on the rigorous numerical computations which are the centerpiece of the computer assisted proofs. One technique which we use extensively is the interval arithmetic. We perform our numerical computations with the use of the CAPD library \cite{MR4283203,MR4395996}, which implements all the necessary operations such as the rigorous evaluation of the Poincar\'e maps.

The other tool which is essential for us is the interval variant of the Newton's method which we use to get the bounds of the solutions of our nonlinear equations. We provide the description of this tool in this section. Let us first set up the needed notation.

We shall refer to a product of closed intervals in $\mathbb{R}^{n}$ as an interval vector. We define an interval enclosure of a set $X\subset\mathbb{R}^{n}$ by:
\[
    [X]=Y_{1}\times...\times Y_{n}\text{ such that }\pi_{i}(X)\subset Y_{i}\text{ for }i\in\{1,...,n\},
\]
where $Y_{1},...,Y_{n}$ are closed real intervals and $\pi_{i}$ is the projection onto the $i$-th component. Note that an interval enclosure is not unique. (We prefer tighter enclosures, since they provide better estimates.) For a set $X\subset\mathbb{R}^{n}$ and for a map $F=(F_{1},...,F_{n}):\mathbb{R}^{n}\to\mathbb{R}^{n}$ we denote an interval enclosure of $F(X)$ as
\[
    \big[F(X)\big]=Y_{1}\times...\times Y_{n}\text{ such that }F_{i}(X)\subset Y_{i}\text{ for }i\in\{1,...,n\}.
\]

A set $A\subset\mathbb{R}^{n\times n}$ is an interval matrix if each of its entries is a closed interval. We say that an interval matrix $A$ is invertible if for every $B\in A$ the matrix $B$ is invertible. We will call an interval matrix $[A^{-1}]$ an interval enclosure of the inverse of $A$ if for every $B\in A$ we have $B^{-1}\in[A^{-1}]$. Note that as in case of interval enclosures in general, the interval enclosure of the inverse of $A$ is not unique, since after it is
enlarged it remains an enclosure of the inverse.

If the map $F$ is $C^{1}$ then we will write $[DF(X)]\subset\mathbb{R}^{n\times n}$ for an interval matrix that satisfies 
\[
    DF(X)\subset[DF(X)].
\]

\begin{theorem}
    \cite{ALEFELD2000421}
    Let $X$ be an interval vector with nonempty interior, let $x\in\interior X$ be some point and let $F:\mathbb{R}^{n}\to\mathbb{R}^{n}$ be $C^{1}$. If for 
    \[
        N(x,X):=x-\big[DF(X)^{-1}\big]\big[F(x)\big]
    \]
    we have $N(x,X)\subset X$, then there exists a unique point $x^{*}\in N(x,X)$ such that 
    \[
        F(x^{*})=0.
    \]
\end{theorem}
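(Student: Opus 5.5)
The statement is the interval Newton theorem. I would prove it by reducing it to a Brouwer fixed-point argument, using a mean-value representation of $F$ on the box $X$. The key observation is that, since $X$ is an interval vector, it is convex. Hence for any $y\in X$ the segment from $x$ to $y$ lies in $X$, and the integral form of the mean value theorem gives
\[
    F(y)-F(x)=M(y)\,(y-x),\qquad M(y):=\int_0^1 DF\big(x+\tau(y-x)\big)\,d\tau .
\]
Each entry of $M(y)$ is an average of values of the corresponding entry of $DF$ on $X$. Because each entry of $[DF(X)]$ is a closed interval, the entries of $M(y)$ lie in those same intervals, so $M(y)\in[DF(X)]$. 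The hypothesis that $N(x,X)$ is defined implicitly means $[DF(X)]$ is an invertible interval matrix with enclosure $[DF(X)^{-1}]$. Therefore $M(y)$ is invertible and $M(y)^{-1}\in[DF(X)^{-1}]$.

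For existence, I would define $T(y):=x-M(y)^{-1}F(x)$ for $y\in X$. This map is continuous: $M$ is continuous in $y$ since $DF$ is continuous, and inversion is continuous on invertible matrices. Since $M(y)^{-1}\in[DF(X)^{-1}]$ and $F(x)\in[F(x)]$, interval arithmetic gives $M(y)^{-1}F(x)\in[DF(X)^{-1}][F(x)]$. Hence $T(y)\in N(x,X)\subset X$. By Brouwer's theorem, $X$ being homeomorphic to a closed ball because it has nonempty interior, there is a point $x^*=T(x^*)\in N(x,X)$. Then $M(x^*)(x^*-x)=-F(x)$, so
\[
    F(x^*)=F(x)+M(x^*)(x^*-x)=0 .
\]

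For uniqueness, suppose $F(y_1)=F(y_2)=0$ with $y_1,y_2\in X$. Applying the mean value argument along the segment between $y_1$ and $y_2$ gives $0=\tilde M(y_2-y_1)$ with $\tilde M\in[DF(X)]$, again by convexity. Since $\tilde M$ is invertible, $y_1=y_2$. So the zero is unique in $X$, and in particular in $N(x,X)$.

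The only delicate step is justifying that $M(y)$ lies in the interval matrix. A single-point mean value theorem does not apply to vector-valued maps, so I use the integral form applied entrywise: the $(i,j)$ entry of $M(y)$ is the average of $\partial_j F_i$ along the segment, and an average of values in a closed interval stays in that interval. The other point to state explicitly is that the definition of $N(x,X)$ presupposes $[DF(X)]$ is invertible.
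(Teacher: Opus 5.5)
Your proof is correct and complete. The paper gives no proof of its own here; it simply quotes the result from the interval-analysis literature. Your argument is the standard proof of it:
\begin{itemize}
\item the integral slope matrix $M(y)$ lies in $[DF(X)]$ because the box $X$ is convex;
\item Brouwer's theorem applied to $y\mapsto x-M(y)^{-1}F(x)$ gives existence of a zero in $N(x,X)$;
\item invertibility of every matrix in $[DF(X)]$ gives uniqueness on all of $X$.
\end{itemize}
You were right to read $[DF(X)^{-1}]$ as an enclosure of the inverses of \emph{all} matrices in $[DF(X)]$, which matches the paper's definition of the enclosure of the inverse of an interval matrix. An enclosure of only the pointwise inverses $DF(y)^{-1}$ would not in general contain $M(y)^{-1}$.
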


This result can be extended. Interval Newton method for the validation of the existence of the solution to implicit equations over an explicit domain is formally stated in the following theorem.

\begin{theorem}
    \label{thm:INM}
    \cite{Walawska2019ValidatedNF}
    Let $X \subset \mathbb{R}^n$ and $Z \subset \mathbb{R}^m$ be interval vectors with nonempty interior, let $x \in \interior X$ be some point and let $F : \mathbb{R}^m \times \mathbb{R}^n \to \mathbb{R}^n$ be a $C^1$ map. If for 
    \[
        N(x,X,Z) = x - \left[ D_x F(Z,X) \right]^{-1} \big[ F(Z,x) \big]
    \]
    we have $N(x,X,Z) \subset \interior X$ then there exists a $C^1$ smooth function $g : Z \to X$ such that the set of zeros $\{ (z,x) \in Z \times X : F(x,z) = 0 \}$ coincides with the graph of the function $g$, i.e.:
    \[
        \left\{ (z,x) \in Z \times X : F(x,z) = 0 \right\} = \left\{ (z,g(z)) : z \in Z \right\}.
    \]
\end{theorem}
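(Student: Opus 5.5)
The proof reduces to the classical interval Newton theorem of \cite{ALEFELD2000421}, applied separately for each fixed parameter $z\in Z$. Smoothness of the resulting solution branch then follows from the classical implicit function theorem. (I read the zero set in the statement as $\{(z,x) : F(z,x)=0\}$, matching the order of arguments in $N(x,X,Z)$.) Throughout, I take the hypothesis to include that the interval matrix $[D_xF(Z,X)]$ is invertible, which is implicit in writing its inverse.

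\textbf{Step 1: existence of a zero for each $z$.} Fix $z\in Z$ and set $F_z(x)=F(z,x)$. By definition of interval enclosures,
\[
D F_z(X)\subset [D_xF(Z,X)], \qquad F_z(x)\in [F(Z,x)],
\]
so every matrix in the interval inverse used for $F_z$ alone may be taken inside $[D_xF(Z,X)]^{-1}$. Hence
\[
N_z(x,X):=x-[D_xF(Z,X)]^{-1}[F_z(x)]\subset N(x,X,Z)\subset \interior X .
\]
The theorem of \cite{ALEFELD2000421} then gives a zero $x^*_z\in N_z(x,X)\subset\interior X$ of $F_z$. I define $g(z):=x^*_z$.

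\textbf{Step 2: uniqueness in all of $X$, not just in $N$.} Suppose $x_1,x_2\in X$ both satisfy $F(z,x_i)=0$. Apply the mean value theorem to each component $F_j$ on the segment $[x_1,x_2]\subset X$, which lies in $X$ by convexity of the box. This gives
\[
0=F(z,x_1)-F(z,x_2)=A(x_1-x_2),
\]
where the $j$-th row of $A$ is $\nabla_x F_j(z,\xi_j)$ for some $\xi_j\in X$. The points $\xi_j$ may differ from row to row, but an interval matrix is a product of entrywise intervals, so this mixed-row matrix still belongs to $[D_xF(Z,X)]$. That matrix is invertible, so $x_1=x_2$. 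Consequently the zero set in $Z\times X$ is exactly the graph of $g$, and $g(z)\in\interior X$ for every $z$. I expect this step to be the main point needing care: the mixed-row argument, and making explicit that invertibility of the interval matrix is part of the hypothesis.

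\textbf{Step 3: $C^1$ regularity.} Fix $z_0\in Z$. Since $F$ is defined and $C^1$ on all of $\mathbb{R}^m\times\mathbb{R}^n$, boundary points of $Z$ cause no trouble. The matrix $D_xF(z_0,g(z_0))$ lies in $[D_xF(Z,X)]$, so it is invertible, and the implicit function theorem gives:
\begin{itemize}
\item neighborhoods $U\ni z_0$ and $V\ni g(z_0)$, where $V$ can be chosen inside $X$ because $g(z_0)\in\interior X$;
\item a $C^1$ map $h:U\to V$ whose graph is the zero set of $F$ in $U\times V$.
\end{itemize}
For $z\in U\cap Z$ the point $h(z)\in V\subset X$ is a zero of $F(z,\cdot)$ in $X$. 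By Step 2 it must equal $g(z)$, so $g=h$ on $U\cap Z$. Hence $g$ is $C^1$ near every point of $Z$, which completes the proof.
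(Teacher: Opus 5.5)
Your proof is correct. The paper gives no proof of its own and simply imports the statement from \cite{Walawska2019ValidatedNF}. Your argument is the standard route behind that result: interval Newton for each fixed $z$, reusing the enclosures $[D_xF(Z,X)]^{-1}$ and $[F(Z,x)]$; a row-wise mean value argument placing the mixed-row matrix in $[D_xF(Z,X)]$, which gives uniqueness in all of $X$; and the implicit function theorem at $(z_0,g(z_0))\in Z\times\interior X$. That last step produces local $C^1$ extensions of $g$, which is the appropriate meaning of $C^1$ on the closed box $Z$.
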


\subsection{Covering relations}
In this section we recall the results form \cite{ZGLICZYNSKI200432}, which provide the primary topological tools for obtaining our results. In this paper it will be sufficient for us to consider the two dimensional case where both unstable and stable directions of h-sets have dimensions one, so the definitions provided below are the special cases of the respective definitions from \cite{ZGLICZYNSKI200432}.

Let $B = [-1,1]\subset \mathbb{R}$ and $\partial B = \{ -1, 1 \}$.

\begin{definition}
    \label{def:h_set}
    An h-set is a pair $N = \left( |N|, c_N \right)$ where $|N|$ is a compact subset of $\mathbb{R}^2$ and $c_N : \mathbb{R}^2  \to  \mathbb{R}^2$ is a homeomorphism such that:
    \[
        c_N \left( B \times B \right) = |N|.
    \]
    The subset $|N|$ is called a support of an h-set.
\end{definition}

We denote (see Figure \ref{fig:covering}):
\[
    N_c = B \times B, \quad
    N_c^- = \partial B \times B, \quad 
    N_c^+ = B \times \partial B,
\]
\[
    N^- = c_N ( N_c^- ), \quad
    N^+ = c_N ( N_c^+ ),
\]
\[
    N^{l} = c_N \big( \{ -1 \} \times B \big), \quad
    N^{r} = c_N \big( \{ +1 \} \times B \big).
\]
We shall refer to $N^-$ as the exit set.

In this paper the supports of all h-sets we consider will be quadrangles. We will frequently refer to the support as the h-set and neglect to write out the homeomorphism $c_N$, which simply maps the square $N_c$ into a quadrangle, whenever it will be clear from the context which edges of the quadrangle represent the exit set.

\begin{figure}
\begin{center}
	\begin{overpic}[width=13cm]{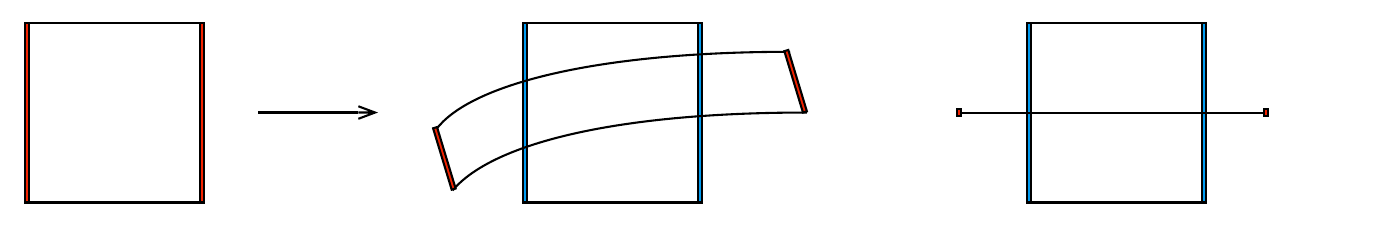}
	\put (7,0){$N_c$}
	\put (43,0){$M_c$}
	\put (79,0){$M_c$}
	\put (22,12){$f_c$}
	\put (55,16){$f_c(N_c)$}
	\put (90,12){$h(1,N_c)$}
	\end{overpic}
\end{center}
\caption{A depiction of a covering relation $N \xRightarrow{f} M$. The exit sets $N_c^-$ and $M_c^-$ are in red and blue, respectively. On the right we have the final result of the homotopy from Definition \ref{def:covering-relation}.\label{fig:covering}}
\end{figure}

\begin{definition}
    \label{def:covering-relation}
    Assume $N$, $M$ are h-sets. Let $f : N \to \mathbb{R}^n$ be a continuous map, such that the map $f_c = c_M^{-1} \circ f \circ c_N : N_c \to \mathbb{R}^2$ is well-defined and continuous. We say that
    \[
        N \xRightarrow{f} M
    \]
    ($N$ $f$-covers $M$) if and only if the following conditions are satisfied (see Figure \ref{fig:covering}):
    
    \begin{enumerate}
        \item There exists a continuous homotopy $h : [0;1] \times N_c \to \mathbb{R}^2$, such that the following conditions hold true:
        \begin{align}
            \begin{aligned}
                h_0 &= f_c, \\
                h \big( [0;1], N_c^- \big) \cap M_c &= \emptyset, \\
                h \big( [0;1], N_c \big) \cap M_c^+ &= \emptyset. \\
            \end{aligned}
        \end{align}
        \item There exists a real number $a > 1$, such that:
        \[
            h(1, (p, q)) = (ap, 0), \quad p,q \in B.
        \]
    \end{enumerate}
\end{definition}

In our computer assisted proofs, we assert the existence of the covering relations by checking the following conditions ($\pi_1$ and $\pi_2$ denote the projection onto the first and the second component respectively):
\begin{align}
    \label{eq:covering_relation_conditions}
    \begin{aligned}
        \pi_{2}\big(f_c(N_c)\big) & \subset B,\\
        \pi_{1}\big(f_c(N_c^{l})\big) & < -1,\\
        \pi_{1}\big(f_c(N_c^{r})\big) & > +1,
    \end{aligned}
\end{align}
which we refer to as the contraction condition, the left expansion condition and the right expansion condition respectively.

\begin{definition}\label{def:NT}
    Let $N$ be an h-set. We define the h-set $N^T$ such that the support of $N^T$ is equal to the support of $N$ and  the homeomorphism $c_{N^T}$ is such that:
    \[
        c_{N^T} = c_N \circ j,
    \]
    where $j : \mathbb{R}^2 \to \mathbb{R}^2$ is given by $j(p,q) = (q,p)$.
\end{definition}

\begin{definition}\label{def:back-covering-relation}
    Assume $N$, $M$ are h-sets. Let $g$ be a function such that $g^{-1} : M \to \mathbb{R}^2$ is well defined and continuous. We say that
    \[
        N \xLeftarrow{g} M
    \]
    ($N$ $g$-backcovers $M$) if and only if $M^T \xRightarrow{g^{-1}} N^T$.
\end{definition}

The following theorem will be our main tool for constructing our oscillating motions.
\begin{theorem}
    \label{thm:generic_covering_relations}
    \cite{ZGLICZYNSKI200432} Let $k \in \mathbb{N}$, $N_0, ..., N_{k-1}$ be a sequence of h-sets, let $N_k=N_0,$ and $f_{1}, \dots, f_{k}$ be a sequence of functions such that
    \[
    	N_{i-1} \xLeftarrow{f_i} N_{i} \qquad \mbox{or} \qquad N_{i-1} \xRightarrow{f_i} N_{i} \qquad \mbox{for }i\in\{1,\ldots,k\}.
    \]
    Then, there exists a point $x$ in the interior of $N_0$ such that:
    \[
        f_{i}\circ f_{i-1}\circ\cdots\circ f_{1}(x) \in N_i, \qquad \mbox{for }i=1,\ldots,k-1,
    \]
    and
    \[
        f_{k}\circ f_{k-1}\circ \cdots \circ f_{1}(x)=x.
    \]
\end{theorem}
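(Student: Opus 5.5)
The plan is to convert the periodic-orbit problem into a zero-finding problem on the product of the model squares and to show that this problem has nonzero local Brouwer degree. Working in the coordinates $x_i = c_{N_i}^{-1}(\cdot) = (p_i,q_i) \in N_c = B\times B$ for $i=0,\dots,k-1$, with indices mod $k$ so that $x_k = x_0$, I would first rewrite every back-covering $N_{i-1}\xLeftarrow{f_i} N_i$ through Definition~\ref{def:back-covering-relation}. It becomes the covering $N_i^T \xRightarrow{f_i^{-1}} N_{i-1}^T$, in which the roles of $p$ and $q$ are swapped by $j$. I then define $F:(N_c)^k \to (\mathbb{R}^2)^k$ componentwise:
\[
F_i(x_0,\dots,x_{k-1}) = x_i - f_{i,c}(x_{i-1}) \quad \text{(covering)}, \qquad F_i(x_0,\dots,x_{k-1}) = x_{i-1} - (f_i^{-1})_c(x_i) \quad \text{(back-covering)},
\]
where the subscript $c$ denotes expression in the model coordinates. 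A zero of $F$ is exactly a point $x=c_{N_0}(x_0)$ with $f_i\circ\cdots\circ f_1(x)\in N_i$ for all $i$ and $f_k\circ\cdots\circ f_1(x)=x$. The back-covering components encode the same relation $x_i = f_i(x_{i-1})$, because $f_i^{-1}$ is defined on $N_i$. It therefore suffices to show $\deg(F,\interior (N_c)^k,0)\neq 0$.

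Next I deform $F$ by replacing each $f_{i,c}$, respectively $(f_i^{-1})_c$, by the homotopy $h^{(i)}(t,\cdot)$ from Definition~\ref{def:covering-relation}. At $t=1$ this gives an affine model. A covering contributes the equations $p_i - a_i p_{i-1}=0$ and $q_i=0$. A back-covering, after undoing the transposition, contributes $q_{i-1} - a_i q_i = 0$ and $p_{i-1}=0$. Each relation thus constrains one ``unstable'' coordinate $p$ and one ``stable'' coordinate $q$. Solving around the cycle forces all $p_i$ and all $q_i$ to vanish, since $a_i>1$ gives, for example, $p_0=\prod a_i\,p_0 \Rightarrow p_0=0$ in the pure-covering case. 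Hence the linear model has the unique zero $0\in\interior(N_c)^k$ and is an invertible linear map, so its degree is $\pm1$. Homotopy invariance then transfers this to $F$, and the theorem follows by existence of a zero.

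The main obstacle is checking that no zero of the homotopy $F_t$ lies on $\partial (N_c)^k$ for $t\in[0,1]$; this is a case analysis. Suppose $x\in\partial(N_c)^k$, so some $x_j\in\partial N_c$. If $x_j \in N_c^+$, i.e.\ $q_j=\pm1$, consider the relation linking $x_{j-1}$ to $x_j$. If it is a covering, the condition $h(t,N_c)\cap M_c^+=\emptyset$ forbids $x_j = h(t,x_{j-1})$. If it is a back-covering, I instead look at the relation linking $x_j$ to $x_{j+1}$: there $x_j$ is the preimage side in transposed coordinates, so $q_j=\pm1$ places it in the exit set of the transposed h-set. If $x_j\in N_c^-$, i.e.\ $p_j=\pm1$, the symmetric argument uses the condition $h(t,N_c^-)\cap M_c=\emptyset$ on the forward relation from $j$, or the $M^+$ condition on the transposed relation from $j-1$. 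I expect the bookkeeping of which of the two adjacent relations to use, depending on the mixture of covering and back-covering types, to be the delicate point. I would handle it by treating each back-covering uniformly as a covering of $N_i^T$ by $N_{i-1}^T$ and verifying that every boundary face of every $N_c$ is an ``exit'' or ``entry'' face for at least one of its two adjacent relations. Once the boundary is excluded, the degree argument completes the proof, and the zero lies in the interior as required.
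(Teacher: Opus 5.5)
Your overall route (zeros of $F$ on $(N_c)^k$, deformation through the covering homotopies to a linear model, degree $\pm1$) is the standard Zgliczy\'nski--Gidea degree argument behind the cited result; the paper itself only quotes the theorem. Your linear endpoint is also correct, including the transposed contributions $p_{i-1}=0$, $q_{i-1}-a_iq_i=0$ and the injectivity around the cycle. The step that fails as written is the boundary bookkeeping you flagged. Write $m$ for the index, to avoid a clash with the swap map $j$. A face of the $m$-th factor is excluded by a relation only if it is the exit set on the \emph{source} side or the entry set on the \emph{target} side. Your rule of switching to the other neighbour when a back-covering occurs gives neither. Suppose relation $m$ is a back-covering and $q_m=\pm1$. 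In relation $m+1$, the point $x_m$ is either the source of an ordinary covering, where $\{q_m=\pm1\}=N_c^+$ is the source's entry set, or the target $j(x_m)=(q_m,p_m)$ of $N_{m+1}^T\xRightarrow{f_{m+1}^{-1}}N_m^T$, where first coordinate $\pm1$ is the target's exit set. Definition~\ref{def:covering-relation} constrains neither. Similarly, the $M^+$ condition of the transposed relation between $N_{m-1}$ and $N_m$ constrains $x_{m-1}$, not $x_m$, so it cannot exclude $p_m=\pm1$.

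The fix needs no case split on the neighbours' types, because reversing the map and transposing the coordinates cancel each other.

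The face $\{q_m=\pm1\}$ is always excluded by the incoming relation $m$:
\begin{itemize}
\item For a covering, $x_m=h(t,x_{m-1})$ would lie in $M_c^+$, which is forbidden.
\item For a back-covering, $x_m$ is the source of $N_m^T\xRightarrow{f_m^{-1}}N_{m-1}^T$. Then $j(x_m)=(\pm1,p_m)\in(N_m^T)_c^-$, and its image must avoid $(N_{m-1}^T)_c\ni j(x_{m-1})$.
\end{itemize}

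The face $\{p_m=\pm1\}$ is always excluded by the outgoing relation $m+1$:
\begin{itemize}
\item For a covering, it is the source's exit set $N_c^-$.
\item For a back-covering, $j(x_m)=(q_m,\pm1)$ lies in the target's entry set $(N_m^T)_c^+=B\times\partial B$.
\end{itemize}

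With this assignment, $F_t$ has no zeros on $\partial(N_c)^k$ for $t\in[0,1]$, and the rest of your argument goes through. A minor point: ``a covering of $N_i^T$ by $N_{i-1}^T$'' reverses the direction. The relation is $N_i^T\xRightarrow{f_i^{-1}}N_{i-1}^T$, as you wrote earlier.
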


The theorem above can be extended (we will use both versions in our arguments) by introducing the following two notions.
\begin{definition}\label{def:horizontal-disc} Let $N$ be an h-set. Let $b:[-1,1]\to |N|$ be continuous and let $b_c=c_N^{-1}\circ b$. We say that $b$ is a {\em horizontal disc} in $N$ if there is a homotopy $h:[0,1]\times[-1,1]\to N_c$ such that
\begin{align*}
	h_0&=b_c, \\
	h_1(x)& = (x,0), \quad \mbox{for all }x\in [-1,1], \\
	h([0,1],x)&\subset N_c^-, \qquad \mbox{for }x\in\{-1,1\}.
\end{align*}
\end{definition}
\begin{definition}\label{def:vertical-disc} Let $N$ be an h-set. Let $b:[-1,1]\to |N|$ be continuous and let $b_c=c_N^{-1}\circ b$. We say that $b$ is a {\em vertical disc} in $N$ if there is a homotopy $h:[0,1]\times[-1,1]\to N_c$ such that
\begin{align*}
	h_0&=b_c, \\
	h_1(y)& = (0,y), \quad \mbox{for all }y\in [-1,1], \\
	h([0,1],y)&\subset N_c^+, \qquad \mbox{for }y\in\{-1,1\}.
\end{align*}
\end{definition}
With these notions we can formulate the following result:
\begin{theorem}\cite{MR2494688}
    \label{thm:cover-discs}
    Let $k \in \mathbb{N}$, $N_0, ..., N_{k-1}$ be a sequence of h-sets and $f_{1}, \dots, f_{k}$ be a sequence of functions such that
    \[
    	N_{i-1} \xLeftarrow{f_i} N_{i} \qquad \mbox{or} \qquad N_{i-1} \xRightarrow{f_i} N_{i} \qquad \mbox{for }i\in\{1,\ldots,k\}.
    \]
    Assume that $b_h$ is a horizontal disc in $N_0$ and $b_v$ is a vertical disc in $N_k$.
    
    Then, there exists a point $x$ in the interior of $N_0$ such that:
    \begin{align*}
    	x&=b_h(t) &\mbox{for some }t\in (-1,1), \\
	    f_{i}\circ f_{i-1}\circ\cdots\circ f_{1}(x) &\in N_i, &\mbox{for }i=1,\ldots,k-1,\,\,\, \\
	    f_{k}\circ f_{k-1}\circ\cdots\circ f_{1}(x)&=b_v(s) &\mbox{for some }s\in(-1,1).
    \end{align*}
  
\end{theorem}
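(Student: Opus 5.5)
The plan is to recast the existence of the required orbit as the existence of a zero of a single continuous map on a product of boxes. We then show this map has nonzero local Brouwer degree by homotoping it to a linear model. This is the standard degree argument used for Theorem \ref{thm:generic_covering_relations}, with the closing condition $x_k=x_0$ replaced by the two disc conditions.

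\textbf{Unknowns and the map $F$.} We work in the local coordinates $c_{N_i}^{-1}$. The unknowns are $t\in B$, points $z_i=(p_i,q_i)\in N_{i,c}=B\times B$ for $i=1,\ldots,k-1$, and $s\in B$. The domain is therefore $\Omega=B\times (B\times B)^{k-1}\times B$, of dimension $2k$. Set $z_0=(b_h)_c(t)$ and $z_k=(b_v)_c(s)$. For each $i$ we write one $\mathbb{R}^2$-valued component $F_i$:
\begin{itemize}
\item if $N_{i-1}\xRightarrow{f_i}N_i$, then $F_i=f_{i,c}(z_{i-1})-z_i$;
\item if $N_{i-1}\xLeftarrow{f_i}N_i$, then $F_i=(f_i^{-1})_c(z_i)-z_{i-1}$, using transposed coordinates as in Definition \ref{def:back-covering-relation}.
\end{itemize}
Zeros of $F=(F_1,\ldots,F_k):\Omega\to\mathbb{R}^{2k}$ are exactly the sought chains. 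For backward steps this uses that $f_i^{-1}$ is defined on $N_i$, so $z_{i-1}=f_i^{-1}(z_i)$ is equivalent to $f_i(z_{i-1})=z_i$.

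\textbf{Homotopy to a linear model.} We deform all data simultaneously:
\begin{itemize}
\item each covering map $f_{i,c}$ (or $(f_i^{-1})_c$ in swapped coordinates) via the homotopy from Definition \ref{def:covering-relation}, ending at $(p,q)\mapsto(a_ip,0)$;
\item the horizontal disc via the homotopy of Definition \ref{def:horizontal-disc}, ending at $t\mapsto(t,0)$;
\item the vertical disc via the homotopy of Definition \ref{def:vertical-disc}, ending at $s\mapsto(0,s)$.
\end{itemize}
At the end of the homotopy we have a linear system. For a forward step it reads $a_ip_{i-1}=p_i$, $q_i=0$. For a backward step the roles of the coordinates are exchanged, giving $a_iq_i=q_{i-1}$, $p_{i-1}=0$. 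Together with $z_0=(t,0)$ and $z_k=(0,s)$, this system is square and invertible because every $a_i\neq0$. Its unique solution is the origin, an interior point, so the degree of the model map on $\Omega$ at $0$ is $\pm1$.

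\textbf{No zeros on $\partial\Omega$.} We must check that no zero of the homotopy lies on $\partial\Omega$ at any stage. A boundary point has some coordinate equal to $\pm1$; we go through the cases.
\begin{itemize}
\item $t=\pm1$: then $z_0$ lies in the exit set $N_{0,c}^-$ throughout the disc homotopy. By the covering condition $h([0,1],N_c^-)\cap M_c=\emptyset$, its image leaves $N_{1,c}$, so $F_1\neq0$. For a backward first step the transposed condition gives the same conclusion.
\item $p_i=\pm1$: this is an exit edge of $N_i$. Forward propagation through the next covering is impossible by the same condition.
\item $q_i=\pm1$: this is an entry edge $N_i^+$. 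It cannot be hit as an image, by the condition $h([0,1],N_c)\cap M_c^+=\emptyset$.
\item $s=\pm1$: then $z_k\in N_{k,c}^+$ throughout the homotopy of the vertical disc, which again cannot be hit as an image.
\end{itemize}
For backward steps the same conditions hold after exchanging $+$ and $-$ via $j$.

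\textbf{Conclusion.} By homotopy invariance of the Brouwer degree, $\deg(F,\interior\Omega,0)=\pm1\neq0$, so $F$ has a zero in $\interior\Omega$. This gives $t,s\in(-1,1)$, the point $x=b_h(t)$ in the interior of $N_0$, and intermediate points $f_i\circ\cdots\circ f_1(x)\in N_i$ for $i=1,\ldots,k-1$, with $f_k\circ\cdots\circ f_1(x)=b_v(s)$.

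\textbf{Main obstacle.} The delicate part is the boundary bookkeeping when forward and backward coverings are mixed. Each boundary face of $\Omega$ must be excluded by exactly one of the covering inequalities, with the correct orientation of $N^{\pm}$ after transposition. The disc endpoints also have to match the exit and entry sets of the first and last sets. I would handle this by proving a one-step lemma for $k=1$ covering all four combinations (forward or backward step, with horizontal disc in $N_0$ and vertical disc in $N_1$). The general case then follows by induction on the chain, concatenating coordinates.
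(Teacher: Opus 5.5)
Your degree argument is correct and is the standard route. The paper gives no proof of this statement and defers to \cite{MR2494688}; results of this type (cf.\ Theorem \ref{thm:generic_covering_relations} and \cite{ZGLICZYNSKI200432}) are proved by exactly such a local Brouwer degree computation on a chain of unknowns. Your bookkeeping checks out:
\begin{itemize}
\item The model system has only the trivial zero. The $p$-equations force $p_{k-1}=\dots=p_0=0$ going down from $p_k=0$, and the $q$-equations force $q_1=\dots=q_k=0$ going up from $q_0=0$.
\item Every face of $\partial\Omega$ is excluded for all $\lambda\in[0,1]$ by one adjacent covering. This works because the disc homotopies take values in $N_c$ and keep their endpoints in $N_c^-$, resp.\ $N_c^+$.
\item For backward steps, the face $q_i=\pm1$ is killed by step $i$, because $N_i^+$ is the exit set of $N_i^T$. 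The face $p_i=\pm1$ is killed by step $i+1$, because $N_i^-$ is the entry set of $N_i^T$, which the homotoped $f_{i+1}^{-1}$ avoids. This is what your remark about exchanging $+$ and $-$ amounts to.
\end{itemize}

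The step that does not go through is the last one: from $t\in(-1,1)$ you conclude that $x=b_h(t)$ lies in the interior of $N_0$. The zero lies in $\interior\Omega$, which gives $z_i\in\interior N_{i,c}$ for $1\le i\le k-1$. The first covering also gives $x\notin N_0^-$, by the same argument as for the face $t=\pm1$ at $\lambda=0$. But nothing excludes $x\in N_0^+$: no covering relation maps into $N_0$, and a horizontal disc may touch $N_0^+$ at interior parameters.

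In fact the interiority claim is false with the definitions used here. Take $N_0=N_1=[-1,1]^2$ with $c_N=\mathrm{id}$ and $f_1(p,q)=(2p,0)$, which is a covering with the constant homotopy. Take the horizontal disc $b_h(t)=(t,1-t^2)$, with homotopy $(\lambda,t)\mapsto(t,(1-\lambda)(1-t^2))$, and the vertical disc $b_v(s)=(0,s)$. Then $f_1(b_h(t))=b_v(s)$ forces $t=s=0$, so $x=(0,1)\in\partial N_0$.

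What your proof actually establishes is the statement with $x\in|N_0|\setminus N_0^-$. Interiority follows only under an extra hypothesis such as $b_h((-1,1))\cap N_0^+=\emptyset$. That hypothesis does hold in the paper's application, where $b_h$ parametrizes the collision manifold in $\mathcal{N}_0$, i.e.\ the diagonal $z_1+z_2=0$ of $N_c$, which meets $\partial N_c$ only at its endpoints. So you should either weaken the conclusion or add this hypothesis, rather than assert interiority from $t\in(-1,1)$.
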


\subsection{Poincare sections and symmetries}
\label{subsec:poincare_sections}
In order to simplify the analysis of the dynamics of the planar circular restricted three body problem we make use of the method of the Poincar\'e maps \cite{poincaré1890probleme}. This technique relies on the selection of the subsets of the phase space of codimension one called Poincar\'e sections and the analysis of how the point from one section is mapped onto another by following the trajectory of the dynamical system.

The technique is general and it can be used to transform an arbitrary continuous time dynamical system into a discrete time dynamical system with the phase space of a lower dimension. In principle, we expect the analysis and the description of such system to be simpler than the original one.

However, the system we consider has two properties which we can use to simplify its analysis even further. First, the system is defined by the time independent Hamiltonian. Secondly, the system features a time reversal symmetry which we can make use of to reduce the amount of the numerical computations and improve their accuracy.

In this chapter, we present the approach from \cite{CAPINSKI2026109173} which takes all of the mentioned facts into account to construct the Poincar\'e maps between the sections restricted to a constant energy level. We also discuss the relationship between the Poincar\'e maps and the covering relations and the time reversal symmetry of the system.

Let $\phi (t,x) : \mathbb{R} \times \mathbb{R}^{n} \supset \dom\phi \to \mathbb{R}^{n}$ be the flow of the continuous time dynamical system, where the positive integer $n$ is the dimension of the phase space.

We consider the differentiable function $\Sigma: \mathbb{R}^{n} \supset \dom\Sigma \to \mathbb{R}$ such that the flow of the dynamical system $\phi$ is transversal to the set $\{ \Sigma = 0 \}$, that is:
\[
    D\Sigma (x) \cdot D_t \phi(0,x) \neq 0
\]
for all $x \in \mathbb{R}^n$ which satisfy $(0,x) \in \dom \phi$ and $\Sigma(x) = 0$ ($D\Sigma$ denotes the derivative of $\Sigma$ and it is considered to be a row vector, $D_t \phi$ denotes the partial derivative of $\phi$ with respect to time $t$ and it is interpreted as a column vector). The set $\{ \Sigma = 0 \}$ is called the Poincar\'e section (for simplicity we shall refer to it as section $\Sigma$).

We define the function $T_{\Sigma} : \mathbb{R}^{n} \supset \dom T_\Sigma \to \mathbb{R}_+$ by:
\[
    T_\Sigma (x) = \inf \left\{ t > 0: \Sigma \left( \phi(t,x) \right) = 0 \right\},
\]
where the domain of the function $T_\Sigma$ is the set of $x\in\mathbb{R}^{n}$ such that the set
\[
    \left\{ t > 0: \Sigma \left( \phi(t,x) \right) = 0 \right\}
\]
is not empty. The function $T_\Sigma$ is called the time to section $\Sigma$.

We define the Poincar\'e map onto section $\Sigma$ by
\[
    P_\Sigma (x) = \phi( T_\Sigma(x), x),
\]
where the domain of $P_\Sigma$ is equal the domain of $T_\Sigma$.

Let $\Sigma_1$, $\Sigma_2$ be the two Poincar\'e sections. We define the Poincar\'e map from section $\Sigma_1$ onto section $\Sigma_2$ by $P_{\Sigma_2, \Sigma_1} = P_{\Sigma_2}$, where the domain of $P_{\Sigma_2, \Sigma_1}$ is restricted to section $\Sigma_1$, i.e.:
\[
    \dom P_{\Sigma_2, \Sigma_1} = \dom P_{\Sigma_2} \cap \{ \Sigma_1 = 0 \}.
\]

We recall that our dynamical system is defined by the time independent Hamiltonian $\Gamma$, which means that value of $\Gamma$ remains constant along the trajectory of the system. We can make use of this fact to simplify the analysis of our setup even further, by lowering the dimension of the domain and image of the Poincar\'e maps between sections by one. Instead of analyzing the dynamics of the Poincar\'e maps from section to section, we shall analyze the dynamics of the Poincar\'e maps between an intersections of the sections and the zero energy level set $\{ \Gamma = 0 \}$ \footnote{Hamiltonian is invariant under the addition of a constant, which means that restriction to the zero energy level can be transformed into the restriction to an arbitrary energy level in a straighforward way.} (we choose the energy level equal zero for the compatibility with the condition (\ref{eq:gamma_zero_condition})).

Let us discuss how we select the Poincar\'e sections and how we parameterize their restrictions to the constant energy level.

In our constructions, every Poincar\'e section is a hyperplane of codimension one. In order to specify it uniquely, it is sufficient to specify an arbitrary point, which belongs to it and the normal vector.

We shall always specify the point $w \in \mathbb{R}^n$, which shall be located in the vicinity of some relevant orbit (periodic orbit, homoclinic orbit, heteroclinic orbit, etc.). The normal vector $\hat{n}$ shall always be an approximation of the flow of the system at the specified point $w$, i.e. $J \nabla \Gamma(w)$. We emphasize that both quantities $w$ and $\hat{n}$ do not need to be chosen exactly and it is sufficient that $\Gamma(w) \approx 0$, which makes this approach suitable for the numerical computation.

Taking the considerations above into account, we define every Poincar\'e section $\Sigma$ according to the formula:
\begin{equation}
    \label{eq:section-choice}
    \Sigma (p) = \langle \hat{n}, p-w \rangle,
\end{equation}
where $\langle\cdot,\cdot\rangle$ denotes the scalar product.

Let $A \in \mathbb{R}^{n \times n}$ be an invertible matrix. We introduce the shorthand notation $(\Gamma, \Sigma)(x)$ for the function $x \mapsto \left( \Gamma(x), \Sigma(x) \right)$ and we implicitly define $E : \mathbb{R}^{n-2} \to \mathbb{R}^2$ to be the function satisfying
\begin{equation}
    \label{eq:E-implicit-def}
    (\Gamma,\Sigma)(w + A(z,E(z)))=0,
\end{equation}
where $(z, E(z))$ denotes a $n$-dimensional vector which comes from the concatenation of $z$ and $E(z)$.

We notice that for the fixed $w$, fixed $A$ and fixed $z$, the $E(z)$ can be computed in interval arithmetic by means of the interval Newton method (see Theorem \ref{thm:INM}).

Finally, we define the function $\psi : \mathbb{R}^{n-2} \supset \dom\psi \to \mathbb{R}^{n}$ by:
\begin{equation}
    \label{eq:local-parametrization-choice}
    \psi(z) = w + A(z, E(z)),
\end{equation}
and we shall refer to $\psi$ as the local coordinate system of section $\Sigma$ on the zero energy level in the neighborhood of $w$.

An inverse of $\psi$ can be computed in a straighforward way:
\begin{equation}
    \label{eq:inverse-psi}
    \psi^{-1} (p) = \pi_{1, ..., n-2} A^{-1} (p - w)
\end{equation}
under the assumption that $p$ belongs to $\{\Sigma = 0 \} \cap \{ \Gamma = 0 \}$, where $\pi_{1, ..., n-2}$ denotes the projection onto the first $n-2$ coordinates.

Let $\Sigma_1$ and $\Sigma_2$ be two Poincare sections equipped with the local coordinate systems $\psi_1$ and $\psi_2$ constructed according to the description provided before, i.e.:
\[
    \psi_1 (z) = w_1 + A_1 (z, E_1 (z)),
\]
\[
    \psi_2 (z) = w_2 + A_2 (z, E_2 (z)).
\]
The Poincare map from section $\Sigma_1$ onto section $\Sigma_2$ restricted to the zero energy level, expressed in the local coordinate systems takes form:
\[
    P_{21} = (\psi_2)^{-1} \circ P_{\Sigma_2, \Sigma_1} \circ \psi_1 : \mathbb{R}^{n-2} \to \mathbb{R}^{n-2}.
\]
The construction of the local coordinate systems $\psi_1$ and $\psi_2$ according to the description presented above shall provide correct, rigorous results for an arbitrary invertible matrices $A_1$ and $A_2$, as long as the functions $E_1$ and $E_2$ can be successfully evaluated with the use of interval Newton method.

In our case, we choose the matrix $A$ such that
\begin{equation}
    \label{eq:A-form}
    A = \left[ \hat{u} \quad \hat{s} \quad \hat{n} \quad \hat{h} \right],
\end{equation}
where $\hat{u}$ and $\hat{s}$ are the approximations of the unstable and stable directions at point $w$ respectively. The vector $\hat{n}$ is the normal vector of the Poincar\'e section (see (\ref{eq:section-choice})) and the vector $\hat{h}$ is the approximation of $\nabla \Gamma (w)$.

The selection of $\hat{u}$ and $\hat{s}$ leads to the good alignment of the local coordinates with respect to the dynamics of the system: the image of the point $(0,0)$ under local Poincar\'e map will be close to $(0,0)$, and for $z=(z_1, z_2)$, the first coordinate $z_1$ will be expanding for the local map and $z_2$ will be contracting. Furthermore, the derivatives of $P_{21}$ will be close to diagonal.  On the other hand, the selection of $\hat{n}$ and $\hat{h}$ minimizes the numerical errors.

Let us now discuss how we establish the covering relations in the described setup.

\begin{definition}
    \label{def:h_set_on_section}
    Let $\mathcal{N} := (N, \psi)$ be a pair that consists of an h-set $N$ and a parametrization $\psi$ of section $\{\Gamma = 0\} \cap \{ \Sigma = 0 \}$. We shall refer to the object $\mathcal{N}$ as an h-set $N$ on section $\Sigma$.
\end{definition}

We consider $\mathcal{N}_1 = (N_1, \psi_1)$ and $\mathcal{N}_2 = (N_2, \psi_2)$. We will use the notation:
\begin{equation}
    \label{eq:sec-covering-1}
    \mathcal{N}_1 \xRightarrow{P} \mathcal{N}_2 \qquad\text{and}\qquad
    \mathcal{N}_1 \xLeftarrow{P} \mathcal{N}_2
\end{equation}
to denote the covering relations
\begin{equation}
    \label{eq:sec-covering-2}
    N_1 \xRightarrow{P_{21}} N_2 \qquad\text{and}\qquad
    N_1 \xLeftarrow{P_{21}} N_2 
\end{equation} 
respectively, when it is clear from the context with which Poincar\'e sections and with which local coordinates the map $P$ is associated.

Let us assume that the system driven by the Hamiltonian $\Gamma$ features a time reversal symmetry with respect to a function $S : \mathbb{R}^n \to \mathbb{R}^n$, such that $S(x) = \hat{S}x$, where $\hat{S} \in \mathbb{R}^{n \times n}$ is a unitary matrix, i.e. $\hat{S} \hat{S}^T = I$. In this setting, the time reversal symmetry with respect to the function $S$ is equivalent to:
\[
    \Gamma \circ S = \Gamma \quad \text{and}\quad
    \hat{S} \cdot J \cdot \hat{S}^T = -J, \quad\text{where}\quad
    J = \begin{pmatrix} 0 & I_{n/2} \\ -I_{n/2} & 0 \end{pmatrix},
\]
where $I_{n/2}$ is the identity matrix of dimension $n/2 \times n/2$ (our system is driven by the Hamiltonian, so we implicitly assume that the phase space dimension $n$ is an even number).

Let $\Sigma$ be the hyperplane section, which contains the point $w \in \mathbb{R}^n$, and its normal vector is $\hat{n} \in \mathbb{R}^n$. Then, the function $\Sigma S = \Sigma \circ S$ defines the hyperplane section which contains the point $\hat{S}^T w$ and its normal vector is $\hat{S}^T \hat{n}$. For a given $\psi:\mathbb{R}^{n-2}\to \{\Gamma =0\} \cap \{ \Sigma = 0 \}$, $S\psi$ is a parameterization of $\{ \Sigma S = 0\} \cap\left\{\Gamma=0\right\}.$ For $\mathcal{N} = (N,\psi)$ we define:
\[
    S\mathcal{N} := (N^T, S\psi)
\]
and we call it an image of the h-set $\mathcal{N}$ under the symmetry $S$.

\begin{lemma}
    \label{lem:S_backsymmetry}
    Consider  $\mathcal{N}_1 = (N_1, \psi_1)$ and $\mathcal{N}_2 = (N_2, \psi_2)$, where $N_1$ and $N_2$ are two h-sets on the sections $\Sigma_{1}$ and $\Sigma_{2}$, respectively. The sections are parameterized by $\psi_1$ and $\psi_2$, respectively. Consider also two Poincar\'e maps $P:\Sigma_{1}\rightarrow\Sigma_{2}$ and $P^{S}:S\Sigma_{2}\rightarrow S\Sigma_{1}$. If
    \begin{equation}
        \label{eq:Poinc-cover}
        \mathcal{N}_1 \xRightarrow{P} \mathcal{N}_2
    \end{equation}
    then
    \begin{equation}
        \label{eq:symmetry-back-covering}
        S \mathcal{N}_2 \xLeftarrow{P^S} S \mathcal{N}_1.
    \end{equation}
\end{lemma}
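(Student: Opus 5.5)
By Definition \ref{def:back-covering-relation}, the claim $S\mathcal{N}_2 \xLeftarrow{P^S} S\mathcal{N}_1$ means $(N_1^T)^T \xRightarrow{(P^S_{loc})^{-1}} (N_2^T)^T$. Here $P^S_{loc} = (S\psi_1)^{-1}\circ P^S \circ S\psi_2$ is the Poincar\'e map $P^S$ written in the local coordinates of $S\Sigma_2$ and $S\Sigma_1$. Since $(N^T)^T = N$, which holds because $j\circ j = \mathrm{id}$, it suffices to prove $N_1 \xRightarrow{(P^S_{loc})^{-1}} N_2$. The plan is to show that, on the relevant domain, $(P^S_{loc})^{-1}$ coincides with the local map $P_{21} = \psi_2^{-1}\circ P\circ\psi_1$. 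The covering relation then reduces directly to the hypothesis (\ref{eq:Poinc-cover}).

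\textbf{Step 1: time reversal conjugates the maps.} Take $x\in\{\Sigma_1=0\}\cap\{\Gamma=0\}$ with $P(x)=y=\phi(T,x)$, where $T=T_{\Sigma_2}(x)$. By $\phi(-t,S(q)) = S(\phi(t,q))$ we get $\phi(T,S y) = S\phi(-T,y) = Sx$. Hence the trajectory starting at $Sy\in S\Sigma_2$ reaches $Sx\in S\Sigma_1$ after time $T$. The same identity maps the intermediate orbit segment $\phi((0,T),x)$ bijectively onto the segment from $Sy$ to $Sx$. Because $\phi((0,T),x)$ avoids $\Sigma_2$ by the definition of $T_{\Sigma_2}$, the reversed segment avoids $S\Sigma_2$ but not necessarily $S\Sigma_1$. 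This is the main obstacle: to conclude $P^S(Sy)=Sx$ we need the first hit of $S\Sigma_1$ to be at time $T$.

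I would handle it by reading $P^S$ as the map whose domain and first-return are tied to $P$: the maps considered are local Poincar\'e maps along the same orbit segment, and $P^S$ is the Poincar\'e map between the symmetric sections following the reversed orbit. Equivalently, $P^S := S\circ P^{-1}\circ S$ on $S(P(\dom))$, which is then well defined and continuous by the argument above. If a strict first-hit interpretation is required, I would add the assumption that the orbit segment from $x$ to $P(x)$ does not cross $\Sigma_1$ before reaching $\Sigma_2$, and check this in the rigorous computation. Either way, $P^S\circ S = S\circ P^{-1}$, so $(P^S)^{-1} = S\circ P\circ S^{-1}$ on the image.

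\textbf{Step 2: local coordinates.} Write $S\psi := S\circ\psi$. Then
\[
  (P^S_{loc})^{-1} = (S\psi_2)^{-1}\circ (P^S)^{-1}\circ S\psi_1
  = \psi_2^{-1}S^{-1}\, S P S^{-1}\, S\psi_1 = \psi_2^{-1} P\psi_1 = P_{21}.
\]
I also check that $S$ maps $\{\Gamma=0\}\cap\{\Sigma=0\}$ onto $\{\Gamma=0\}\cap\{\Sigma S=0\}$, using $\Gamma\circ S=\Gamma$ and that $S$ is an involution (for the given $\hat S$, $\hat S^T=\hat S$). This guarantees that $S\psi$ is a legitimate parametrization, so the supports and homeomorphisms match: $c_{N_i^{TT}} = c_{N_i}$.

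\textbf{Step 3: conclude.} The map $(P^S_{loc})^{-1} = P_{21}$ is defined and continuous on $N_1$, and $N_1\xRightarrow{P_{21}}N_2$ holds by hypothesis. Definition \ref{def:back-covering-relation} then yields (\ref{eq:symmetry-back-covering}). The homotopy from Definition \ref{def:covering-relation} is reused unchanged, and the only nontrivial content is the first-hit issue in Step 1.
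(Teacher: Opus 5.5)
The paper states this lemma without proof (it is imported from \cite{CAPINSKI2026109173}), but your argument is the natural one and is correct. Reversibility gives $P^S\circ S = S\circ P^{-1}$ on $S P(\psi_1(N_1))$, hence $\big((S\psi_1)^{-1}\circ P^S\circ S\psi_2\big)^{-1} = \psi_2^{-1}\circ P\circ\psi_1$, and unwinding Definition \ref{def:back-covering-relation} with $(N^T)^T = N$ reduces the claim exactly to the hypothesis $N_1 \xRightarrow{P_{21}} N_2$.

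You are also right that, for literal first-hit maps, $P^S(SP(x)) = Sx$ needs the orbit segment from $x$ to $P(x)$ not to re-cross $\Sigma_1$; otherwise $Sx$ is not even in the image of $P^S$. This is a genuine imprecision in the statement, and either of your two fixes is adequate for how the lemma is used in the paper.
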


\begin{definition}
    \label{def:self-S-symmetry}
    We will say that $\mathcal{N} = (N,\psi)$ is self $S$-symmetric, if
    \[
        S \circ \psi \circ c_{N^T}=\psi \circ c_N
    \]
    where $N$ is an h-set with a homeomorphism $c_N$ (see Definition \ref{def:NT} for the notation $c_{N^T}$).
\end{definition}

\begin{remark}
    \label{rem:S_symmetry_cN}
    If the local coordinate system $\psi$ satisfies the condition $S \circ \psi \circ j = \psi$ (we recall that $j : \mathbb{R}^2 \to \mathbb{R}^2$ is defined by $j(x_1, x_2) = (x_2, x_1)$), then the construction of the self $S$-symmetric h-sets in that system is achieved by ensuring that the homeomorphism $c_N$ satisfies the condition $j \circ c_N = c_N \circ j$.
\end{remark}

\begin{lemma}
    \label{lem:self-symmetric-covering}
    Assume that $\mathcal{N}$ is a self $S$-symmetric h-set on a Poincar\'e section $\Sigma_N$ and that $\mathcal{M}$ is an h-set (not necessarily self $S$-symmetric) on a Poincar\'e section $\Sigma_M$. Then (here we use the short-hand notation (\ref{eq:sec-covering-1}))
    \begin{align*} 
        \mathcal{N} \xRightarrow{P} \mathcal{M} & \qquad \mbox{implies} \qquad S\mathcal{N} \xRightarrow{P} \mathcal{M}, \\
        \mathcal{M} \xRightarrow{P} \mathcal{N} & \qquad \mbox{implies} \qquad \mathcal{M} \xRightarrow{P} S\mathcal{N}, \\
        \mathcal{N} \xLeftarrow{P} \mathcal{M} & \qquad \mbox{implies} \qquad S\mathcal{N} \xLeftarrow{P} \mathcal{M}, \\
        \mathcal{M} \xLeftarrow{P} \mathcal{N} & \qquad \mbox{implies} \qquad \mathcal{M} \xLeftarrow{P} S\mathcal{N}.
    \end{align*}
\end{lemma}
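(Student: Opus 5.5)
The plan is to show that $\mathcal{N}$ and $S\mathcal{N}$ are, after unpacking the definitions, the \emph{same} h-set on the \emph{same} section with the \emph{same} local map. Then each implication is a tautology once the relevant local map is rewritten.

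First, $S\mathcal{N}$ lives on the same section as $\mathcal{N}$. We have $S\mathcal{N} = (N^T, S\psi)$, where $S\psi = S\circ\psi$ parameterizes $\{\Sigma_N S=0\}\cap\{\Gamma=0\}$. Self-symmetry (Definition \ref{def:self-S-symmetry}) gives $S\circ\psi\circ c_{N^T} = \psi\circ c_N$. So the support of the h-set $S\mathcal{N}$ in phase space is
\[
S\psi(|N^T|) = S\psi(c_{N^T}(N_c)) = \psi(c_N(N_c)) = \psi(|N|),
\]
the same subset of phase space as $\mathcal{N}$. Moreover, the composite "phase space point as a function of the normalized coordinates in $N_c$" coincides for the two objects:
\[
(S\psi)\circ c_{N^T} = \psi\circ c_N .
\]
The exit sets and entry sets also coincide, since they are images of $N_c^\pm$ under this common map.

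Second, express the covering relation. A covering $\mathcal{N}\xRightarrow{P}\mathcal{M}$ is, by definition, $N\xRightarrow{P_{MN}}M$ with $P_{MN}=\psi_M^{-1}\circ P\circ\psi$. The map appearing in Definition \ref{def:covering-relation} is therefore
\[
f_c = c_M^{-1}\circ\psi_M^{-1}\circ P\circ\psi\circ c_N .
\]
For $S\mathcal{N}\xRightarrow{P}\mathcal{M}$ the corresponding map is
\[
c_M^{-1}\circ\psi_M^{-1}\circ P\circ (S\psi)\circ c_{N^T},
\]
which equals $f_c$ by the identity above. Here $P$ is the Poincar\'e map from the section of $S\mathcal{N}$, which is pointwise the same set of phase-space points as $\psi(|N|)$, so $P$ acts on the same points. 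Identical $f_c$ means the same homotopy works, which proves the first implication.

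The second implication is the mirror case, with $f_c = c_{N^T}^{-1}\circ(S\psi)^{-1}\circ P\circ\psi_M\circ c_M$. We use the inverted identity $c_{N^T}^{-1}\circ(S\psi)^{-1} = c_N^{-1}\circ\psi^{-1}$ on the relevant image. This needs some care, because $\psi^{-1}$ is only defined on the section. It should be justified by noting that $S\psi\circ c_{N^T}$ and $\psi\circ c_N$ are the same homeomorphism onto the common support, so their inverses agree there. In the covering conditions we only need $f_c$ on $N_c$ and its composition with $c^{-1}$ near the target. To be safe, I would state it as: the two local maps agree wherever both are defined, and their domains coincide.

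The back-covering cases follow from Definition \ref{def:back-covering-relation}. There, $\mathcal{N}\xLeftarrow{P}\mathcal{M}$ means $M^T\xRightarrow{P^{-1}}N^T$, so we apply the already-proved forward statements to the inverse map, with $^T$ applied on both sides. Self-symmetry of $\mathcal{N}$ also gives self-symmetry in transposed form, since $c_{(N^T)^T}=c_N$ and $S^{-1}=S$. The main obstacle is this bookkeeping of domains of the inverses $\psi^{-1}$ and $(S\psi)^{-1}$. I would handle it by working throughout with the composite homeomorphisms $\psi\circ c_N$ onto the common support.
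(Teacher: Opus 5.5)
The paper does not prove this lemma itself; it is recalled from \cite{CAPINSKI2026109173}, so there is no argument to compare against. Your argument is correct and is the natural direct proof. Self-symmetry says exactly that $S\psi\circ c_{N^T}$ and $\psi\circ c_N$ are the same map. Hence the map $f_c$ of Definition~\ref{def:covering-relation} is literally unchanged when $\mathcal{N}$ is replaced by $S\mathcal{N}$. The back-covering cases reduce to the forward ones because $(N^T,\psi)$ is again self $S$-symmetric and $S(N^T,\psi)=(N,S\psi)$.

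Two points need tightening. Both arise in the second and fourth implications, where $S\mathcal{N}$ sits at the target end of $P$.

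\emph{Inverses off the support.} There $f_c$ applies $c_{N^T}^{-1}\circ(S\psi)^{-1}$ to $P(\psi_M(c_M(M_c)))$. In any covering this set sticks out of $\psi(|N|)$, since the image of $M_c^-$ must miss $N_c$. So equality of the inverses ``on the common support'' is not enough. Your fallback (the two maps agree wherever defined and have equal domains) is the right statement. It holds because Definition~\ref{def:self-S-symmetry} asserts $S\circ\psi\circ c_{N^T}=\psi\circ c_N$ as an identity of maps on the whole domain of $\psi\circ c_N$, not only on $N_c$. Equal injective maps then have equal inverses on their entire common image.

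\emph{The map $P$ itself.} In these implications $P$ is the first-hit map onto $\{\Sigma_N\circ S=0\}\cap\{\Gamma=0\}$. Coincidence of the supports alone does not make it equal to the first-hit map onto $\{\Sigma_N=0\}\cap\{\Gamma=0\}$: an orbit could cross one hypersurface, away from the h-set, before reaching the other. Your opening claim that $S\mathcal{N}$ ``lives on the same section'' is true here, but for a different reason than the one you give. The conditions $Sw=w$ and $\hat n=-S\hat n$ give $\Sigma_N\circ S=-\Sigma_N$. For $\Sigma_0=\{v=0\}$ this is immediate, and for $\Sigma_\theta$ the two sections agree with $\theta$ taken modulo $2\pi$. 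Alternatively, read the lemma as keeping the same map $P$ in hypothesis and conclusion, which is what your formulas tacitly do.

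With these two justifications in place, the proof is complete.
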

    
The statements of the Lemmas \ref{lem:S_backsymmetry} and \ref{lem:self-symmetric-covering} point to the following conclusion: if we are able to find a sequence of h-sets connected by the covering relations, where the first and the last of the h-sets are self $S$-symmetric, then the images of these h-sets under the symmetry $S$ are also connected by the covering relations, but in the opposite direction. Furthermore, both sequences can be connected, which leads to the setup identical to the one described in the assumptions of the Theorem \ref{thm:generic_covering_relations}.

\subsection{Ejection/collision orbit and its homoclinic}
\label{subsec:EC_orbit_and_homoclinic}
In this chapter we summarize all the key constructions from \cite{CAPINSKI2026109173} which are needed to prove the existence of the oscillatory motion to collision in the planar circular restricted three body problem for the mass ratio equal $1/82$. We begin with the discussion of the properties and the parameters of the ejection / collision orbit. Then, we present the construction of the covering relations between the h-sets $\mathcal{N}_0, ..., \mathcal{N}_{18}$ and $\mathcal{Q}_k$ for $k \in \mathbb{N}$ (see Figure \ref{fig:covering_relations_overview_detailed}). We place these sets on the Poincar\'e sections restricted to the fixed energy level in such a way that they intersect with the ejection / collision orbit or its homoclinic orbit.

Throughout this section we work with the regularized flow $\Phi_{s}^{2,h_{0}}$ generated by the Hamiltonian $\Gamma_{2,h_{0}}$, which regularizes the collision with the second mass, which in the Earth-Moon system is the Earth.

Ejection / collision orbit is determined by performing the continuation of the periodic Lyapunov orbits of the $L_1$ libration point until they reach the collision with the greater primary. Formally, the outcome of this operation is stated in the following theorem.

\begin{theorem}
    \label{thm:lyapunov_orbit_energy}
    \cite{CAPINSKI2026109173} 
    There exists positive real value $\tau$ and real value $h_{0}$ such that
    \[
        \Gamma_{2,h_{0}}(w_{0})=0,\quad\Phi_{2\tau}^{2,h_{0}}(w_{0})=w_{0}.
    \]
    where as $w_{0}=(0,0,0,\sqrt{8\mu_{2}})$ and\footnote{The exact bounds for the value of $h_0$ that we obtained and that we use in the later computations are $h_{0}\in-\big[6404647308743062,6404647308742343\big]\cdot2^{-53}$.}
    \begin{equation}
        \label{eq:h0_energy_interval}
        h_0 \in [ -0.71106, -0.71105 ].
    \end{equation}
\end{theorem}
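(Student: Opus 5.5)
The plan is to exploit the time reversal symmetry $S$ of the regularized flow $\Phi_s^{2,h}$ and to reduce the existence of the periodic ejection/collision orbit to a one-dimensional zero-finding problem in the energy parameter $h$, which we then solve rigorously with the interval Newton method (Theorem \ref{thm:INM}). The point $w_0=(0,0,0,\sqrt{8\mu_2})$ lies on the collision circle (\ref{eq:collision-circle}) and is a fixed point of $S$, since $S(0,0,0,\sqrt{8\mu_2})=(0,0,0,\sqrt{8\mu_2})$. Moreover, $\Gamma_{2,h}(w_0)=0$ holds for every $h$, because the prefactor $4(u^2+v^2)$ vanishes and the singular part of $H$ is exactly compensated on the collision circle. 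So the energy condition is automatic, and $h$ remains a free parameter.

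The key reduction is the standard symmetric periodic orbit argument. Suppose that for some $h$ and $\tau>0$ the point $q=\Phi_\tau^{2,h}(w_0)$ lies in $\mathrm{Fix}(S)=\{v=0,\ p_u=0\}$. Then
\[
\Phi_{-\tau}^{2,h}(q)=\Phi_{-\tau}^{2,h}(S q)=S\Phi_\tau^{2,h}(q),
\]
and since the left-hand side equals $w_0$, we get $\Phi_\tau^{2,h}(q)=S w_0=w_0$. Hence $\Phi_{2\tau}^{2,h}(w_0)=w_0$. Geometrically, $q$ is the perpendicular crossing of the orbit with the $x$-axis near $L_1$; in the $(u,v)$ plane this is the symmetry line $v=0$. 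We therefore take a Poincar\'e section $\Sigma=\{v=0\}$, or the analogous hyperplane near the approximate crossing point, chosen transversal to the flow. We define
\[
F(h)=\pi_{p_u}P_\Sigma(w_0;h).
\]
Once $P_\Sigma(w_0;h)\in\Sigma$ with $v=0$, the condition $F(h)=0$ forces $q\in\mathrm{Fix}(S)$, and the return time is the $\tau$ we want.

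The computational steps run as follows. First, a nonrigorous continuation of the $L_1$ Lyapunov family in the Levi-Civita coordinates locates the orbit that hits collision, and gives an approximate $h^*\approx -0.711055$ together with the crossing time. Second, CAPD computes rigorously, for the interval $X=[-0.71106,-0.71105]$ and the point $h^*$, enclosures of $F(h^*)$ and of $F'(X)$. The derivative is obtained from the $C^1$ Poincar\'e map with respect to the parameter $h$, treating $h$ as an extra constant variable with $\dot h=0$, and also enforcing transversality of the flow to $\Sigma$ over the whole enclosure. Third, we verify $N(h^*,X)=h^*-F(h^*)/[F'(X)]\subset\interior X$. Theorem \ref{thm:INM} (or its scalar version from \cite{ALEFELD2000421}) then gives a unique $h_0\in X$ with $F(h_0)=0$, and by the reduction above this yields (\ref{eq:h0_energy_interval}) and $\tau$.

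The main obstacle is that the initial point $w_0$ sits exactly at the regularized collision. This is harmless for the regularized vector field, which is polynomial in $(u,v,p_u,p_v)$ near the origin apart from the other primary's term. However, the orbit travels to the $L_1$ region, where the dependence of the crossing point on $h$ has to be controlled tightly enough that $[F'(X)]$ excludes zero and the Newton image is well inside an interval of width $10^{-5}$. I would first try a direct $C^1$ integration with a high Taylor order. If the wrapping effect makes the enclosure of $F'$ too wide, I would shrink $X$ around the footnoted bounds, or split the trajectory into several intermediate sections and compose the derivative enclosures.
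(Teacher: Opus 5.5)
Your proposal is correct and follows essentially the route behind this result, which the paper imports from \cite{CAPINSKI2026109173} without reproducing a proof. The ingredients match: $\Gamma_{2,h}(w_0)=0$ holds for every $h$, the $S$-reversibility reduces $\Phi^{2,h}_{2\tau}(w_0)=w_0$ to the half-orbit hitting $\mathrm{Fix}(S)=\{v=0,\ p_u=0\}$, and a rigorous interval Newton step in the single parameter $h$ gives the tight enclosure quoted in the footnote. One correction to your heuristics: the symmetric crossing is near $w_2$, with $u\approx 0.9975$. That point is within about $0.005$ of the Moon, which sits at $u=\pm1$ and is not regularized by $\Gamma_{2,h}$, rather than near $L_1$, so the delicate part of the validation is this close lunar passage.
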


Ejection / collision orbit is not a periodic orbit in the context of the planar circular restricted three body problem expressed in the standard coordinates ($x, y, p_x, p_y$), because in these coordinates the ejection / collision point does not belong to the phase space of the system. It is the regularization which fills the gap and makes the orbit periodic, which allows us to consider its stable set, unstable set and the homoclinic orbits.

We consider a sequence of points $w_k\in \mathbb{R}^4$ for  $k \in \{ 0, ..., K \}$ with $K = 18$. The points $w_0, .., w_3$ are placed approximately on the ejection / collision orbit and the remaining ones are placed approximately on its homoclinic orbit. The exact choice of the points $w_0,\ldots, w_K$ is written out in Table \ref{table:wk} in Appendix \ref{appendix:coordinate_systems}. At these points we will position consecutive Poincar\'e sections $\Sigma_k$ equipped with the local coordinate changes 
\[
    \psi_{k}:\mathbb{R}^{2}\to\Sigma_{k}\cap\{ \Gamma_{2,h_{0}} = 0\}, \qquad \mbox{for }k=0,\ldots,K.
\]
Local coordinate systems $\psi_1, ..., \psi_K$ have been constructed according to the description from the section \ref{subsec:poincare_sections}. The choice of the vectors $\hat u_k$ and $\hat s_k$ which specifies them is written out in Appendix \ref{appendix:coordinate_systems} (see in particular Tables \ref{table:uk} and \ref{table:sk}).

The section $\Sigma_0$ and the local coordinate change $\psi_0$ is constructed in a different way. The reason is that this section intersects with the collision manifold (see (\ref{eq:collision-circle})) and it plays a crucial role in the establishment of the orbits which begin or end at the collision.

Recall that $x_2= -\mu_1$ is the position of the second mass on the $x$-axis in the original coordinates. Let $R:\mathbb{R}\to \mathbb{R}$ be a function defined as
\begin{equation}
    \label{eq:R-def}
	R(u) := 4u^2 \left( x_{2} + u^{2} \right)^2 + 8\mu_{2} + 8 h_0^{2} + 8 \mu_{1} \frac{ u^{2} }{ |u^{2}-1| }. 
\end{equation}
We consider two fixed non-zero real values $d_1,d_2\in \mathbb{R}$ such that
\[
    d_1= 0.0598649594810129 \cdot 10^{-10} \quad \mbox{and}\quad
    d_2 = 0.997908614890024 \cdot 10^{-10}
\]
and we define
\[
	\psi_0 : \mathbb{R}^2 \to \Sigma_0 \cap \{ \Gamma_{2,h_0}=0 \}
\]
with $\Sigma_0 = \{v=0\}$ by
\[
    \psi_0 (z_1, z_2) := (u, 0, p_u, p_v(u,p_u)),
\]
where  
 \begin{align}
    u &= d_1  (z_1 + z_2), \notag \\
    p_u &= d_2  (z_1 - z_2), \label{eq:psi_0-def}\\
    p_v(u,p_u) &= 2u \left( x_2 + u^2 \right)^2 + \sqrt{ R(u) - p_u^2 }.\notag
\end{align}

Such construction asserts that the collision manifold is the image of the diagonal $z_1 + z_2 = 0$, which means that for the suitably placed h-set (e.g. the h-set $\mathcal{N}_0$ defined below), the collision manifold is both a horizontal disc and a vertical disc in that h-set (see Definitions \ref{def:horizontal-disc} and \ref{def:vertical-disc}).

For $k=0,\ldots,K$ we define a sequence of h-sets $\mathcal{N}_k$ on the sections $\{ \Sigma_k = 0 \} \cap \{\Gamma_{2,h_0}=0\}$, which are parameterized by $\psi_k$, namely:
\begin{equation}
    \label{eq:N_k}
    \mathcal{N}_k = (N_c, \psi_k),    
\end{equation}
where $N_c = [-1,1]^2$. Our choices of the vectors $\hat{u}_k, \hat{s}_k$ defining the parametrizations are appropriately rescaled so that we can choose all of these h-sets to be of the same size in the local coordinates at $w_k$.

\begin{remark}
    The h-sets $\mathcal{N}_0$, $\mathcal{N}_2$ and $\mathcal{N}_K$ are self $S$-symmetric. We achieve this, by ensuring that the origin $w \in \mathbb{R}^4$ and the direction matrix $A \in\mathbb{R}^{4 \times 4}$ for each of the local coordinate systems of these h-sets satisfies:
    \begin{equation}
        \label{eq:S_symmetry_A_matrix_conditions}
        Sw = w,\quad \hat{s} = S\hat{u},\quad \hat{n} = -S\hat{n},\quad \hat{h} = S\hat{h},
    \end{equation}
    where $A = \left[ \hat{u} \quad \hat{s} \quad \hat{n} \quad \hat{h} \right]$ (see (\ref{eq:A-form})). It is crucial to notice that the equations from (\ref{eq:S_symmetry_A_matrix_conditions}) can be satisified with the numerical computations, because the value $0$ are the operation of multiplication by $-1$ are represented exactly without any numerical errors.
\end{remark}

For the results presented below consider a sequence of Poincar\'e maps 
\begin{equation} \label{eq:PS-maps}
P^S_k:S\Sigma_k \to S\Sigma_{k-1}  \qquad \mbox{for } k=1,\ldots,K.
\end{equation}

\begin{theorem}
    \label{thm:covering_relations_1_3}
    \cite{CAPINSKI2026109173} 
    We have the following coverings (here we use the short-hand notation (\ref{eq:sec-covering-1}--\ref{eq:sec-covering-2})):
    \begin{align}
        \label{eq:covering_sequence_c1}
        &\mathcal{N}_0 \xRightarrow{P_1}
        \mathcal{N}_1 \xRightarrow{P_2}
        \mathcal{N}_2, \\
        \label{eq:covering_sequence_c2}
        &\mathcal{N}_2 \xLeftarrow{P_3}
        \mathcal{N}_3 \xLeftarrow{P_0}
        \mathcal{N}_0,
    \end{align}
    \begin{align}
        \label{eq:covering_sequence_o1}
        &\mathcal{N}_0 \xRightarrow{P_1}
        \mathcal{N}_1 \xRightarrow{P_2} 
        \mathcal{N}_2 \xLeftarrow{P_3} 
        \mathcal{N}_3 \xRightarrow{P_4} ... \xRightarrow{P_K}
        \mathcal{N}_K, \\
        \label{eq:covering_sequence_o2}
        &\mathcal{N}_K \xLeftarrow{P^S_K}
        S \mathcal{N}_{K-1} \xLeftarrow{P^S_{K-1}} ... \xLeftarrow{P^S_4}
        S \mathcal{N}_3 \xRightarrow{P^S_3}
        S \mathcal{N}_2 \xLeftarrow{P^S_2}
        S \mathcal{N}_1 \xLeftarrow{P^S_1}
        \mathcal{N}_0.
     \end{align}
\end{theorem}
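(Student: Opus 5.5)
The coverings listed in Theorem \ref{thm:covering_relations_1_3} will be established by a computer assisted verification, one relation at a time, using only the sufficient conditions (\ref{eq:covering_relation_conditions}). The symmetry lemmas will roughly halve the work.

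\emph{Forward chains.} For each forward covering $\mathcal{N}_{k-1} \xRightarrow{P_k} \mathcal{N}_k$ in (\ref{eq:covering_sequence_c1}) and (\ref{eq:covering_sequence_o1}) I will study the local map
\[
P_{k,k-1}=\psi_k^{-1}\circ P_{\Sigma_k,\Sigma_{k-1}}\circ\psi_{k-1}.
\]
The square $N_c$ is subdivided into small boxes, and the edges $N_c^l$, $N_c^r$ into small segments. For every piece, the interval Newton method (Theorem \ref{thm:INM}) is applied to (\ref{eq:E-implicit-def}) to obtain a rigorous enclosure of $E_{k-1}(z)$, and hence of $\psi_{k-1}(z)$; for $\psi_0$ the explicit formula (\ref{eq:psi_0-def}) is used instead. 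This enclosure is propagated with the rigorous CAPD integrator of the regularized flow $\Phi^{2,h_0}_s$ up to $\Sigma_k$, and then mapped back through the linear formula (\ref{eq:inverse-psi}) for $\psi_k^{-1}$.

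It then remains to check the three conditions. The second coordinate of the image must lie in $(-1,1)$ on the whole square, which is the contraction condition. The first coordinate must lie below $-1$ on $N_c^l$ and above $+1$ on $N_c^r$, which are the two expansion conditions. The chain (\ref{eq:covering_sequence_o1}) starts with the same first two relations as (\ref{eq:covering_sequence_c1}), and then continues with forward coverings along the homoclinic from $\mathcal{N}_3$ to $\mathcal{N}_K$, which are verified in the same way.

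\emph{Back-coverings.} By Definition \ref{def:back-covering-relation}, the back-covering $\mathcal{N}_2\xLeftarrow{P_3}\mathcal{N}_3$ means a forward covering $N_3^T\Rightarrow N_2^T$ by the inverse map. I would check it by integrating backward in time from $\Sigma_3$ to $\Sigma_2$, with the roles of the coordinates swapped. The relation $\mathcal{N}_3\xLeftarrow{P_0}\mathcal{N}_0$ is handled in the same way, by integrating backward from $\Sigma_0$ to $\Sigma_3$ along the ejection/collision orbit.

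\emph{Symmetric chain.} Rather than computing (\ref{eq:covering_sequence_o2}) again, I will derive it from (\ref{eq:covering_sequence_o1}). Lemma \ref{lem:S_backsymmetry} turns each forward covering into a back-covering between the $S$-images, in reversed order, and turns each back-covering into a forward covering. The end sets are handled by self-symmetry: $\mathcal{N}_0$, $\mathcal{N}_2$ and $\mathcal{N}_K$ are self $S$-symmetric by the Remark, since their data satisfy (\ref{eq:S_symmetry_A_matrix_conditions}). Lemma \ref{lem:self-symmetric-covering} therefore lets us replace $S\mathcal{N}_0$ by $\mathcal{N}_0$ and $S\mathcal{N}_K$ by $\mathcal{N}_K$ at the two ends of the chain.

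\emph{Main obstacle.} I expect the hardest part to be the relations that touch $\mathcal{N}_0$. Its parametrization contains the collision circle, where the time rescaling degenerates, and the scaling constants $d_1,d_2\sim10^{-10}$ are tiny. Keeping the interval enclosures tight enough for the contraction condition will therefore require fine subdivision of the domain. It should also require a first-order (Lohner / $C^1$) propagation, using the fact that the Jacobian of the local map is nearly diagonal, so that the strong expansion along the first coordinate does not swamp the second coordinate. The same difficulty is expected for the long transits along the homoclinic near $\mathcal{N}_K$. If the direct verification fails, I would insert intermediate sections, shortening each integration step so that the errors stay controlled.
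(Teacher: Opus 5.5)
Your proposal is correct and follows essentially the same route as the paper and its source: each relation is validated directly through the conditions (\ref{eq:covering_relation_conditions}), using interval enclosures of the local Poincar\'e maps obtained with CAPD and the interval Newton method, and (\ref{eq:covering_sequence_o2}) is read off from (\ref{eq:covering_sequence_o1}) via Lemmas \ref{lem:S_backsymmetry} and \ref{lem:self-symmetric-covering}, using the self-symmetry of $\mathcal{N}_0$, $\mathcal{N}_2$ and $\mathcal{N}_K$. One further saving is available: the tabulated data give $w_3=Sw_1$, $\hat u_3=S\hat s_1$ and $\hat s_3=S\hat u_1$, so if $\mathcal{N}_3$ is set up as exactly $S\mathcal{N}_1$, then (\ref{eq:covering_sequence_c2}) is simply the $S$-image of (\ref{eq:covering_sequence_c1}) by the same lemmas, and your separate backward integrations become unnecessary.
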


\begin{theorem}
    \label{thm:covering_relations_R_Q}
    \cite{CAPINSKI2026109173} 
    There exists a sequence of the self $S$-symmetric h-sets $\mathcal{Q}_4, \mathcal{Q}_8, ...$ such that for the nonnegative integer $k$ going to infinity, the sets 
    $\mathcal{Q}_{4(k+1)}$ approach the collision arbitrarily close, but they never intersect it. Furthermore, these exists a sequence of the h-sets $\mathcal{R}_1, \mathcal{R}_2, ...$ such that the following covering relations exist:
    \begin{equation}
        \label{eq:R-Q-coverings}
        \begin{array}[c]{ccccccccc}
        \mathcal{R}_{4k+1}  & \overset{P_{2}}{\implies}& \mathcal{R}_{4k+2} & \overset{P_{3}}{\implies} & \mathcal{R}_{4k+3} & \overset{P_{0}}{\implies} & \mathcal{R}_{4\left(k+1\right)} & \overset{P_{1}}{\implies} & \mathcal{R}_{4\left(k+1\right)+1}\\
        & & &  & \mathcal{R}_{4k+3} & \overset{P_{0}}{\implies} & \mathcal{Q}_{4\left(k+1\right)} & &\\
        \end{array}
    \end{equation}
    and
    \begin{equation}
        \label{eq:gluing}
        \begin{array}[c]{rcc}
            S\mathcal{N}_{4} & \overset{P^S_{4}}{\Longrightarrow} & \mathcal{R}_{1}.\\
        \end{array}
    \end{equation}
\end{theorem}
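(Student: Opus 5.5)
The plan is to build the h-sets $\mathcal{R}_n$ and $\mathcal{Q}_{4(k+1)}$ in the Levi-Civita chart, close to the periodic ejection/collision orbit through $w_0$. The construction relies on the saddle-type hyperbolicity of the Poincar\'e return map $P_0\circ P_3\circ P_2\circ P_1$ along this orbit. I expect the $\mathcal{R}$'s to be a nested family converging to the stable manifold of the orbit. Each loop around the orbit shrinks their unstable extent by roughly the inverse multiplier, while their stable extent stays fixed.

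Concretely, I would define $\mathcal{R}_{4k+j}$ on the section $\Sigma_j$, with $j\in\{1,2,3,0\}$, in the local coordinates $\psi_j$. Each is a quadrangle whose unstable coordinate $z_1$ ranges over an interval of the form $[a_k,b_k]$. This interval sits at distance of order $\lambda^{-k}$ from $z_1=0$, on the side not containing the collision manifold. The stable coordinate $z_2$ ranges over $[-1,1]$ or a slightly smaller interval.

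\textbf{Step 1: the initial set.} I choose $\mathcal{R}_1$ so that the gluing relation (\ref{eq:gluing}) holds. The image $P^S_4(S\mathcal{N}_4)$ is a thin strip stretched along the unstable direction near $w_1$. I take $\mathcal{R}_1$ to be a box whose $z_1$-interval is strictly inside the $z_1$-projection of that strip, and verify (\ref{eq:covering_relation_conditions}) rigorously with CAPD.

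\textbf{Step 2: the periodic coverings.} The chain $\mathcal{R}_{4k+1}\Rightarrow\cdots\Rightarrow\mathcal{R}_{4(k+1)+1}$ cannot be checked for infinitely many $k$ one at a time. Instead I will prove it uniformly. On a fixed neighborhood of the orbit I compute rigorous enclosures of $DP_j$ for $j=1,2,3,0$ in the coordinates $\psi_j$. These enclosures give cone conditions of the form
\[
|\partial_{z_1}(P_j)_1|\ge\lambda_j>1,\qquad \|\partial_{z_2}P_j\|\text{ small},
\]
together with bounds on the off-diagonal terms. I also check that the images of $z_1=0$ remain within a controlled distance of $z_1=0$.

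I then define the $z_1$-intervals recursively, backwards via a fixed rescaling. The intervals for $\mathcal{R}_{4k+j}$ are given as explicit affine functions of $\lambda^{-k}$, with $\lambda$ a rigorously bounded lower estimate of the product of the expansions. A mean-value argument then shows that the left and right expansion conditions hold for every $k$ at once. The contraction condition follows from the $z_2$-contraction bound.

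\textbf{Step 3: the sets $\mathcal{Q}$.} $\mathcal{Q}_{4(k+1)}$ is a self $S$-symmetric box in the coordinates $\psi_0$, chosen as in Remark \ref{rem:S_symmetry_cN} with $j\circ c_Q=c_Q\circ j$. It is placed on the section $\Sigma_0$ at distance of order $\lambda^{-k}$ from the diagonal $z_1+z_2=0$, which is the collision manifold, and on the correct side of it. This guarantees that it never meets the collision but approaches it as $k\to\infty$. The covering $\mathcal{R}_{4k+3}\xRightarrow{P_0}\mathcal{Q}_{4(k+1)}$ comes from the same uniform estimates, because $\mathcal{Q}_{4(k+1)}$ sits inside the $z_1$-range of $P_0(\mathcal{R}_{4k+3})$.

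\textbf{Main obstacle.} The hard part is the uniformity in $k$. A finite list of interval computations must certify infinitely many coverings, and it must also control the nonlinear drift of the invariant stable direction, so that the images do not creep past the collision diagonal for large $k$. I would first try to pass to a coordinate change that straightens the local stable manifold, with the nonlinear terms bounded by a Lipschitz constant. If that fails, I would instead verify the finitely many small-$k$ cases directly and handle the tail $k\ge k_0$ by the derivative-cone argument.
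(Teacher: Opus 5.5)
\textbf{Your $\mathcal{R}$-sets have the wrong shape, and the forward coverings you need cannot hold.} In $N\xRightarrow{f}M$ the image of $N$ must stretch across $M$ in the exit (first) coordinate. One loop $P_1\circ P_0\circ P_3\circ P_2$ multiplies the distance from the local stable manifold $W^s$ by roughly the multiplier $\lambda>1$. Suppose all of $\mathcal{R}_{4k+1}$ lies at distance at least $d$ to one side of $W^s$. Then the image of its left edge lies at distance $\gtrsim\lambda d$ on that side, and the left expansion condition forces $\mathcal{R}_{4(k+1)+1}$ to lie beyond it, i.e.\ $a_{k+1}\gtrsim\lambda a_k$. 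One-sided exit intervals in a forward chain are therefore pushed \emph{away} from $z_1=0$. They cannot approach it like $\lambda^{-k}$, and no mean-value argument changes this. Moreover, once $a_k$ falls below the fixed, $k$-independent offset of $W^s$ from $\{z_1=0\}$ over $z_2\in[-1,1]$, the left edge crosses $W^s$ and part of it is thrown far to the right. Sets converging to $W^s$ with fixed stable extent would describe a different starting set for each number of loops. They do not give a single chain $\mathcal{R}_1\Rightarrow\mathcal{R}_2\Rightarrow\cdots$ glued once to $S\mathcal{N}_4$, which is what the statement asserts.

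The same choice breaks $\mathcal{R}_{4k+3}\xRightarrow{P_0}\mathcal{Q}_{4(k+1)}$. You only check that $\mathcal{Q}_{4(k+1)}$ lies in the $z_1$-range of the image. The contraction condition also needs the $z_2$-range of $P_0(\mathcal{R}_{4k+3})$ to lie inside that of $\mathcal{Q}_{4(k+1)}$. A self $S$-symmetric box in the $\psi_0$ chart has equal $z_1$- and $z_2$-ranges $[\alpha_k,\beta_k]$. You need $\alpha_k>0$ to stay off $z_1+z_2=0$, and $\beta_k\to0$ so that every orbit through the box comes close to collision, which is what $\mathcal{O}_{OC}$ needs. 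But the image of a set with stable extent $[-1,1]$ has a $z_2$-range of fixed size containing both signs.

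What works is the transpose of your choice. This is the scheme of Section 5.1 of \cite{CAPINSKI2026109173}, which the present paper mirrors at the parabolic point in Theorem \ref{thm:M_covrel} and Corollary \ref{col:covrel_M_W}. Keep the exit range of $\mathcal{R}_{4k+j}$ independent of $k$, e.g.\ $[0,x_M]$ with the left edge lying left of $W^s$ so that it is pushed further left. Shrink the entry range to $[e_n,\zeta_n]$ with $0<e_n<\zeta_n\to0$ geometrically, so the sets collapse in the stable direction onto the local unstable manifold. Then take symmetric boxes $\mathcal{Q}_{4(k+1)}$ with both ranges equal to some $[\varepsilon_k,\zeta'_k]$, where $\varepsilon_k>0$ and $\zeta'_k\to0$. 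These are covered by $P_0$ from $\mathcal{R}_{4k+3}$: the fixed exit range is stretched across them, and $[e_{4k+3},\zeta_{4k+3}]$ is contracted into $[\varepsilon_k,\zeta'_k]$. The covering mechanism itself then forces the shadowing orbits to start exponentially close to $W^s$; the sets do not need to do this. Uniformity in $k$ comes from one Jacobian enclosure on a fixed neighbourhood plus an induction on $(e_n,\zeta_n)$ as in the proof of Theorem \ref{thm:M_covrel}. This is simpler here, since the hyperbolicity is already visible in the linear part. Your Step 1, the use of derivative enclosures, and the symmetric choice of $\mathcal{Q}$ via Remark \ref{rem:S_symmetry_cN} can all be kept.
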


\section{Main result}
\label{sec:main_result}
\subsection{Covering relations along the heteroclinic orbit from collision to the infinity}
\label{subsec:covrel_to_infinity}

In this section we present the construction of the sequence of the covering relations which begins with the h-set $\mathcal{N}_2$ (defined in (\ref{eq:N_k}) in chapter \ref{subsec:EC_orbit_and_homoclinic}) and which ends with the h-set $\mathcal{M}_0$ (see Figure \ref{fig:covering_relations_overview_detailed}), which intersects with the parabolic orbit and which is located at a significant distance from the center of the mass of the system. The piece of the trajectory which shadows these covering relations corresponds to the evolution of the position of the test particle from the vicinity of the ejection/collision orbit to the entring a near-parabolic orbit and a significant movement away from the center of the mass of the system.

The construction described here is analogous with respect to the construction described in the chapter \ref{subsec:EC_orbit_and_homoclinic}. Let $w_{19}, ..., w_{29}$ be the points of coordinates listed in the table \ref{table:wk} in the Appendix \ref{appendix:coordinate_systems}. We place Poincar\'e sections $\Sigma_k$ which contain the respective points. The sections are equipped with the local coordinate systems:
\[
    \psi_k : \mathbb{R}^2 \to \{ \Sigma_k = 0 \} \cap \{ \Gamma_{2,h_0} = 0 \} \quad\mathrm{for}\quad k = 19,...,29.
\]
We recall that each of the coordinate systems $\psi_k$ depends on the origin $w_k$ and the direction matrix $A_k$ (see chapter \ref{subsec:poincare_sections} for the detailed description).

The matrices $A_{19}... A_{29}$ are chosen in the following way:
\begin{equation}
    \label{eq:A_19_29}
    \begin{split}
        &A_{19} = A_3, \quad A_{20} = A_0, \quad A_{21} = A_1, \quad A_{22} = A_2, \\
        &A_{23} = A_3, \quad A_{24} = A_0, \quad A_{25} = A_1, \quad A_{26} = A_2, \\
        &A_{27} = A_3 \cdot \textrm{diag}(2.0, 1, 1, 1), \\
        &A_{28} = A_0 \cdot \textrm{diag}(4.0, 1, 1, 1), \\
        &A_{29} = A_1 \cdot \textrm{diag}(8.1, 1, 1, 1), \\
    \end{split}
\end{equation}
where $A_0, A_1, A_2, A_3$ are the matrices $\mathbb{R}^{4 \times 4}$ from the respective local coordinate systems $\psi_0, \psi_1, \psi_2, \psi_3$. The reason behind this choice is the fact that the points $w_{19}, ..., w_{29}$ are placed in the vicinity of the respective points $w_0, ..., w_3$ and the reuse of their direction matrices $A$ is sufficiently accurate for our needs. In case of the matrices $A_{27}, A_{28}, A_{29}$ we introduce an extra scaling of the first column (which corresponds to an unstable vector), which is necessary for the h-sets to be of the suitable size.

Let $k = 19, ..., 29$. We define the sequence of h-sets $\mathcal{N}_k$ on sections $\{ \Sigma_k = 0 \} \cap \{ \Gamma_{2,h_0} = 0 \}$, which are parameterized with $\psi_k$, that is
\[
    \mathcal{N}_k = (N_c, \psi_k),
\]
where $N_c = [-1,1]^2$ (see Definition \ref{def:h_set_on_section}).

\begin{theorem}
    \label{thm:covrel_periodic_to_infinity}
    We have the following coverings (here we use the short-hand notation (\ref{eq:sec-covering-1}--\ref{eq:sec-covering-2})):
    \begin{equation}
        \label{eq:covrel_periodic_to_infinity}
        \mathcal{N}_2 \xRightarrow{P_{19}} \mathcal{N}_{19} \xRightarrow{P_{20}} \mathcal{N}_{20} \xRightarrow{P_{21}} ... \xRightarrow{P_{29}} \mathcal{N}_{29},
    \end{equation}
    (see Theorem \ref{thm:covering_relations_1_3} for the definition of the h-set $\mathcal{N}_2$).
    \begin{proof}
        Existence of the covering relations (\ref{eq:covrel_periodic_to_infinity}) was proven by direct rigorous numeric validation performed with the CAPD package. Our local coordinates have been chosen in a way that the Poincar\'e maps expressed in the local coordinates are well aligned with the dynamics (see section \ref{subsec:poincare_sections}). The CAPD package implements the rigorous computation of Poincar\'e maps $P_{k}$ with the use of interval arithmetic. As discussed in section \ref{subsec:poincare_sections}, the coordinate changes $\psi_{k}$ and $\psi_{k}^{-1}$ are also computable in interval arithmetic. As a result we can compute the interval enclosures of the Poincar\'e maps in the local coordinates and validate the covering relation conditions.

        The computer assisted validation of the conditions takes less than 1 minute execution time. The code is available at \cite{pcr3bp_code}. 
    \end{proof}
\end{theorem}

Due to the fact that the covering relations in the Theorem \ref{thm:covrel_periodic_to_infinity} have been obtained by the evolution of the trajectories in the Levi-Civita coordinate system, there is a need to perform the verification to assert that the orbits which shadow these covering relations do not intersect with the collision manifold.

We define the function $\mathcal{C} : \mathbb{R}^4 \to \mathbb{R}^3$ by
\begin{equation}
    \label{eq:collision_set_condition}
    \mathcal{C} (u,v,p_u,p_v) := \left( u, v, p_u^2 + p_v^2 - 8\mu_2 \right).
\end{equation}
By (\ref{eq:collision-circle}) we see that the points $w \in \mathbb{R}^4$ satisfying $\mathcal{C}(w) = 0$ correspond to a collision with the regularized mass, which is the Earth in our case. 

\begin{theorem}
    Any orbit which shadows the sequence of covering relations (\ref{eq:covrel_periodic_to_infinity}) does not intersect with the collision manifold.
    \begin{proof}
        The proof follows from the direct numerical computations. The CAPD library provides the {\tt SolutionCurve} functionality, which allows us to obtain the rigorous bounds for the orbit enclosures.

        We therefore validate that $\mathcal{C} \ne 0$ for the entire bound of the orbits starting at $\psi_2(N_c)$, $\psi_{19} (N_c)$, $\psi_{20} (N_c), ...$ and ending at $\psi_{19} (N_c)$, $\psi_{20} (N_c)$, $\psi_{21} (N_c),...$ respectively.
        
        These computations are performed together with the numerical computation needed for the proof of Theorem \ref{eq:covrel_periodic_to_infinity}.
    \end{proof}
\end{theorem}

Let $L \in [ 0,1 ) $ and consider $\eta_L : \mathbb{R}^{2} \to \mathbb{R}^2$ defined as 
\begin{equation}
    \label{eq:eta-L}
    \eta_L(z)=\frac{1}{1+L}\left( 
    \begin{array}{cc}
        1 & -L \\ 
        -L & 1
        \end{array}
    \right)(z). 
\end{equation}
\begin{remark}
    The reason behind introducing the factor $1/(1+L)$ into the definition of $\eta_L$ is to ensure that $\eta_L (N_c) \subseteq N_c$ and $\eta^{-1}_L = \eta_{-L}$.
\end{remark}

We define the section $\Sigma_\theta$ as:
\begin{equation}
    \label{eq:sigma_theta}
    \Sigma_\theta =  \{ (\xi, \zeta, \theta, p_\theta) : \theta = -64000\pi \}
\end{equation}
and the h-set contained in that section (according to the definition \ref{def:h_set_on_section}):
\begin{equation}
    \label{eq:def_M_0}
    \mathcal{M}_0 = \left( \eta_{L} M_0, \psi_\theta \right), \quad
    M_0 = \left[ 0, 9 \cdot 2^{-20} \right] \times [75 \cdot 2^{-11}, 79 \cdot 2^{-11} ].
\end{equation}
where the mapping $\psi_\theta : N_c \to \mathbb{R}^4$ is defined as:
\begin{equation}
    \label{eq:psi_alpha_def}
    \psi_\theta (\xi, \zeta) = \left(\xi, \zeta, \theta, p_r \right), \quad p_r = E_H \left( h_0, \frac{1}{2} (\zeta + \xi), \theta, \frac{1}{2} (\zeta - \xi) \right),
\end{equation}
for $\theta = -64000\pi$ (see (\ref{eq:E_H_solution}) for the definition of the function $E_H$). The value $L$ was chosen as $21 \cdot 2^{-21}$, which is approximately $1/100000$.

The presence of the factor $\eta_{L}$ in our construction ensures that the placement of the h-set $\mathcal{M}_0$ takes into account the fact that the stable and unstable manifolds do not coindice with the directions $\xi$ and $\zeta$ exactly. As we shall see in the next chapter, h-sets defined with that factor shall correctly ``handle" the numerical errors originating from the computations.

\begin{remark}
    \label{rem:psi_theta_S_symmetry}
    The local coordiate system $\psi_\theta$ satisfies the relationship $S' \circ \psi_\theta \circ j = \psi_\theta$ (see (\ref{eq:Sp_symmetry}) for the definition of $S'$). It is implied by the symmetry of the function $E_H$ (see Remark \ref{rem:E_H_even}) and the fact that the coordinate $\theta = 64000\pi$ is equivalent $\theta = -64000\pi$ due to the $2\pi$-periodicity of this coordinate. This means that the self $S$-symmetric h-sets can be constructed in the intersection of $\Sigma_\theta$ and the constant energy level $\{ \Gamma_{2,h_0} = 0 \}$ according to the procedure described in the Remark \ref{rem:S_symmetry_cN}.
\end{remark}

\begin{theorem}
    \label{thm:covrel_jump_to_infinity}
    We have the following covering
    \begin{equation}
        \label{eq:covrel_jump_to_infinity}
        \mathcal{N}_{29} \xRightarrow{ P_{\Sigma_\theta} \circ\, \hat{\gamma} \,\circ\, \Phi_{t=1/4} \,\circ\, \gamma_2 } \mathcal{M}_0,
    \end{equation}
    where $\gamma_2$ is the Levi-Civita coordinate change defined in (\ref{eq:lc_coord_change}), $\Phi_{t=1/4}$ is the PCR3BP flow in standard coordinates for the fixed time equal $1/4$, $\hat{\gamma}$ is the coordinate change to the McGehee coordinates (see (\ref{eq:hat_gamma})) and $P_{\Sigma_\theta}$ is the Poincar\'e map onto section $\Sigma_\theta$ along the PCR3BP flow in the McGehee coordinartes.
    \begin{proof}
        Existence of the covering relation has been established with the computer assisted proof in a way analogous to the one for the Theorem \ref{thm:covrel_periodic_to_infinity}.

        The computer assisted validation of the condition takes less than 5 minutes execution time. The code is available at \cite{pcr3bp_code}. 
    \end{proof}
\end{theorem}

\begin{remark}
    In case of the orbits which shadow the covering relation (\ref{eq:covrel_jump_to_infinity}), there is no need to perform a separate proof to check that these orbits do not intersect with the collision manifold. This comes from the fact that when we perform the numerical integration to compute the values of the functions $\Phi_{t=1/4}$ and $P_{\Sigma_\theta}$ we do not use the regularized coordinates. An intersection with the collision manifold cannot take place, because otherwise, numerical computations performed with the CAPD package would not be successful. They would be terminated due to encountering the singularity.
\end{remark}

\begin{figure}
    \begin{center}
        \begin{tikzpicture}[scale=0.6]

            \begin{scope}[thick,decoration={
                markings,
                mark=at position 0.55 with {\arrowreversed{Latex[length=3mm]}}}
                ] 
                \draw[blue, postaction={decorate}] (-10,0) -- (-10,4);
            \end{scope}

            \begin{scope}[thick,decoration={
                markings,
                mark=at position 0.5 with {\arrow{Latex[length=3mm]}}}
                ] 
                \draw[red, postaction={decorate}] (-10,0) -- (-5.5,0);
            \end{scope}

            \begin{scope}[thick,decoration={
                markings,
                mark=at position 0.55 with {\arrowreversed{Latex[length=3mm]}}}
                ] 
                \draw[blue, postaction={decorate}] (-10,0) -- (-10,-2.5);
            \end{scope}

            \begin{scope}[thick,decoration={
                markings,
                mark=at position 0.5 with {\arrow{Latex[length=3mm]}}}
                ] 
                \draw[red, postaction={decorate}] (-10,0) -- (-14.0,0);
            \end{scope}

            \begin{scope}[thick,decoration={
                markings,
                mark=at position 0.5 with {\arrow{Latex[length=3mm]}}}
                ] 
                \draw[red, postaction={decorate}] (0,0) -- (4.0,0);
            \end{scope}

            \begin{scope}[thick,decoration={
                markings,
                mark=at position 0.55 with {\arrowreversed{Latex[length=3mm]}}}
                ] 
                \draw[blue, postaction={decorate}] (0,0) -- (0,4);
            \end{scope}

            \begin{scope}[thick,decoration={
                markings,
                mark=at position 0.1 with {\arrowreversed{Latex[length=3mm]}}}
                ] 
                \draw[red, postaction={decorate}] (-0.0,-2.5) -- (0,0);
            \end{scope}

            \begin{scope}[thick,decoration={
                markings,
                mark=at position 0.5 with {\arrow{Latex[length=3mm]}}}
                ] 
                \draw[blue, postaction={decorate}] (-4.0,-0.0) -- (0,0);
            \end{scope}

            \draw[thin, gray] (2.5,-2.5) -- (-2.5,2.5);

            \draw[thin, gray] (-7.5,-2.5) -- (-12.5,2.5);

            \draw[-Latex, thick] (-6.5,0.1) .. controls (-4.5,3.5) and (-1.5, 2.5) .. (-0.5,1.3);
            \draw[-Latex, thick] (-7.5,0.1) .. controls (-4.5,3.9) and (-1.2, 2.9) .. (0.5,0.9);

            \fill (-7.5,0) circle (2pt);
            \fill (-6.5,0) circle (2pt);

            \fill (-0.5,1.2) circle (2pt);
            \fill ( 0.5,0.8) circle (2pt);

            \draw[line width=1pt] (-0.5, 1.2) -- (0.5, 0.8);
            
            
            
            \draw[thin, gray, dash dot] (-2.5,-2.5) -- (2.5,2.5);

            \draw[thin, gray, dash dot] (-12.5,-2.5) -- (-7.5,2.5);

            \node[anchor=south] at (4,0) {$\xi$};
            \node[anchor=west] at (0,4) {$\zeta$};
            \node[anchor=south] at (-5.5,0) {$\hat{u}$};
            \node[anchor=west] at (-10,4) {$\hat{s}$};

            \node[anchor=west] at (-6.7,3.5) {$P_{\Sigma_\theta} \circ\, \hat{\gamma} \,\circ\, \Phi_{t=1/4} \,\circ\, \gamma_2$};

        \end{tikzpicture}
    \end{center}
    \caption{An illustration of the covering relation present in the Theorem \ref{thm:covrel_jump_to_infinity}: unstable manifold of the ejection-collision orbit intersects transversally with the stable manifold of the limit set of the parabolic orbits.}
\end{figure}

\subsection{Covering relations in the neighborhood of the parabolic fixed point}
\label{subsec:covrel_at_infinity}
In this section we present the construction of the sequence of h-sets $(\mathcal{M}_k)_{k \in \mathbb{N}}$, which intersect with the parabolic orbit and which are located arbitrarily far from the center of mass of the system for $k$ going to infinity. We also show that for each $k$ there exists an $S$-symmetric h-set $\mathcal{W}_k$, which is also located arbitrarily far from the center of the mass of the system for $k$ going to infinity and which is covered by the h-set $\mathcal{M}_{k-1}$.

The idea for the construction presented in this section is analogous to the construction described in section 5.1 in \cite{CAPINSKI2026109173}. However, the characterization of the fixed point in the neighborhood of which we constructed the sets $\mathcal{R}$ and $\mathcal{Q}$ is significantly different from the characterization of the fixed point in the neighborhood of which we shall construct the sets $\mathcal{M}$ and $\mathcal{W}$.

In case of the collision point, the hyperbolic dynamic was visible already in the derivative of the Poincar\'e map: rigorous numerical computations of the Jacobi matrix revealed that the dynamic along the first coordinate is unstable and the dynamic along the second coordinate is stable. In case of the parabolic limit point $(\xi, \zeta) = (0,0)$, the derivative of the flow with respect to the initial position is equal identity.

In order to solve this problem, we formulate the auxiliary theorem, which allows us to ``extract" the information about the dynamic of the integrated differential equation in the neighborhood of the invariant manifold, by introducing suitable additional variables. Then, we apply this theorem to the mapping between the h-sets $\mathcal{M}$ and $\mathcal{W}$, which are the subsets of section $\Sigma_\theta$ (see (\ref{eq:sigma_theta})), which asserts the existence of the suitable covering relations.

\begin{theorem}
    \label{thm:exp_trick}
    Let $n$ and $m$ be the positive integer values. We consider an ordinary differential equation:
    \begin{equation}
        \label{eq:ode_initial}
        \begin{cases}
            \dot{X} &= \mathcal{B}(X) \mathcal{A}(X,Y) X, \\
            \dot{Y} &= \mathcal{F}(X,Y), \\
        \end{cases}
    \end{equation}
    with the initial condition $X(0) = X_0 \in \mathbb{R}^n$, $Y(0) = Y_0 \in \mathbb{R}^m$, where:
    \[
        \mathcal{B} : \mathbb{R}^n \to \mathbb{R}, \quad
        \mathcal{A} : \mathbb{R}^{n+m} \to \mathbb{R}^{n \times n}, \quad
        \mathcal{F} : \mathbb{R}^{n+m} \to \mathbb{R}^m.
    \]
    The solution $X(t)$ of the equation (\ref{eq:ode_initial}) satisfies the equation:
    \begin{equation}
        \label{eq:solution_x}
        X(t) - X(0) = \mathcal{B}(X_0) U(t),
    \end{equation}
    where the functions $C(t)$ and $U(t)$ are the solution of the system of the ordinary differential equations:
    \begin{align}
        \dot{C} &= \big( D \mathcal{B}(X) \mathcal{A}(X,Y) X \big) C, \label{eq:ode_c} \\
        \dot{U} &= C \cdot \mathcal{A}(X,Y) \big( X_0 + \mathcal{B}(X_0) U \big), \label{eq:ode_gamma}
    \end{align}
    with the initial condition $C(0) = 1$ and $U(0) = 0 \in \mathbb{R}^n$.
\end{theorem}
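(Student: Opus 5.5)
The plan is to show that the function $V(t) := X_0 + \mathcal{B}(X_0) U(t)$ solves the same initial value problem as $X(t)$, and then conclude by uniqueness of solutions.

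First I identify $C(t)$. Along a solution of (\ref{eq:ode_initial}), the chain rule gives
\[
\frac{d}{dt}\mathcal{B}(X(t)) = D\mathcal{B}(X)\dot X = \big(D\mathcal{B}(X)\mathcal{A}(X,Y)X\big)\,\mathcal{B}(X).
\]
So $\mathcal{B}(X(t))$ satisfies the scalar linear equation (\ref{eq:ode_c}), with initial value $\mathcal{B}(X_0)$. The equation (\ref{eq:ode_c}) is scalar and linear in $C$, with coefficient $a(t) = D\mathcal{B}(X(t))\mathcal{A}(X(t),Y(t))X(t)$ known along the trajectory. By uniqueness for linear ODEs,
\[
\mathcal{B}(X(t)) = \mathcal{B}(X_0)\,C(t) = \mathcal{B}(X_0)\exp\Big(\int_0^t a\Big).
\]

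Second, I compute the derivative of $V$:
\[
\dot V = \mathcal{B}(X_0)\dot U = \mathcal{B}(X_0)C\,\mathcal{A}(X,Y)V = \mathcal{B}(X)\mathcal{A}(X,Y)V.
\]
Here $X, Y$ are the given solution components. So $V$ solves the linear, nonautonomous system $\dot V = M(t)V$ with $M(t) = \mathcal{B}(X(t))\mathcal{A}(X(t),Y(t))$, and $V(0) = X_0$. The function $X(t)$ solves the same linear system with the same initial condition. Uniqueness for linear ODEs with continuous coefficients then gives $V \equiv X$ on the common interval of existence, which is exactly (\ref{eq:solution_x}).

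The main obstacle is making sure the logic is not circular. The system (\ref{eq:ode_c})--(\ref{eq:ode_gamma}) depends on $(X,Y)$, so I interpret it as being integrated jointly with (\ref{eq:ode_initial}). Regularity assumptions are also needed: $\mathcal{B}$ should be $C^1$, and $\mathcal{A}, \mathcal{F}$ locally Lipschitz, so that all solutions exist and are unique. Under these assumptions both equations for $C$ and $U$ are linear with continuous coefficients, so they exist on the whole interval where $(X,Y)$ exists, and the uniqueness arguments above apply.

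A remark worth recording: $\mathcal{B}(X_0)$ may be tiny, as with $\chi^3$ near infinity, but $U$ stays of order one. This is what makes the identity useful for rigorous enclosures.
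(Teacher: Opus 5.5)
Your proof is correct and follows the same route as the paper. You first identify $\mathcal{B}(X(t)) = \mathcal{B}(X_0)C(t)$ from the scalar linear equation, then differentiate $X_0 + \mathcal{B}(X_0)U$ and compare with the equation for $X$; your explicit appeal to uniqueness for the linear system $\dot V = \mathcal{B}(X(t))\mathcal{A}(X(t),Y(t))V$ makes rigorous the paper's brief, slightly circular-sounding remark that the differentiated equation ``is equivalent to the original one''.
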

\begin{proof}
    We notice that the function $t \mapsto \mathcal{B}(X(t))$ is the solution of the differential equation:
    \begin{equation}
        \label{eq:ode_b}
        \frac{d}{dt} \mathcal{B}(X(t)) = \big( D \mathcal{B}( X ) \mathcal{A}(X,Y) X \big) \mathcal{B}(X(t)),
    \end{equation}
    which means that $\mathcal{B}(X(t))$ can be expressed as
    \begin{equation}
        \label{eq:b_solution}
        \mathcal{B}(X(t)) = \mathcal{B}(X_0) C.
    \end{equation}

    We differentiate the equation (\ref{eq:solution_x}) with respect to $t$ and we substitute (\ref{eq:ode_gamma}) into the result of the differentiation, which leads us to
    \begin{equation}
        \label{eq:ode_x_2}
        \dot{X}(t) = \mathcal{B}(X_0) C \cdot \mathcal{A}(X,Y) \big( X_0 + \mathcal{B}(X_0) U(t) \big).
    \end{equation}

    We notice that the equation above is equivalent to the original one (due to the relationships (\ref{eq:b_solution}) and (\ref{eq:solution_x})), which ends our proof.
\end{proof}

\begin{corollary}
    \label{col:exp_trick_Poincare}
    Let $P_\Sigma (X,Y)$ be the Poincar\'e map onto section
    \[
        \Sigma (X,Y) = 0,
    \]
    along the flow $\phi_t (X,Y)$, which is induced by the system of differential equations (\ref{eq:ode_initial}), i.e.:
    \[
        \phi_t (X_0, Y_0) = \left( X(t), Y(t) \right).
    \]
    Let $\hat{P}_{\hat{\Sigma}} (X,Y,U,C)$ be the Poincar\'e map onto section
    \[
        \hat{\Sigma} (X,Y,U,C) = \Sigma(X,Y) = 0,
    \]
    along the flow $\hat{\phi}_t (X,Y,U,C)$, which is induced by the system of differential equations (\ref{eq:ode_initial}, \ref{eq:ode_c}, \ref{eq:ode_gamma}), i.e.:
    \[
        \hat{\phi}_t (X_0, Y_0, C_0, U_0) = \left( X(t), Y(t), C(t), U(t) \right).
    \]
    It is true that:
    \begin{equation}
        \label{eq:poincare_from_poincare}
        \pi_X P_\Sigma (X, Y) = X + \mathcal{B}(X) \pi_U \hat{P}_{\hat{\Sigma}} (X, Y, 1, 0),
    \end{equation}
    where $\pi_X$ and $\pi_U$ denote the projections onto the coordinates $X$ and $U$ respectively.
\end{corollary}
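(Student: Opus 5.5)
The plan is to derive the corollary directly from Theorem \ref{thm:exp_trick}. The key observation is that the extended system (\ref{eq:ode_initial}, \ref{eq:ode_c}, \ref{eq:ode_gamma}) with initial data $(X_0,Y_0,1,0)$ has $(X,Y)$-components that solve (\ref{eq:ode_initial}) exactly. The equations for $\dot X$ and $\dot Y$ do not involve $C$ or $U$, so they decouple. By uniqueness of solutions, the projection of $\hat\phi_t(X_0,Y_0,1,0)$ onto $(X,Y)$ therefore equals $\phi_t(X_0,Y_0)$ on the common domain of definition. The auxiliary equations (\ref{eq:ode_c}) and (\ref{eq:ode_gamma}) are linear in $C$ and in $U$ respectively, with coefficients continuous along the fixed trajectory $(X(t),Y(t))$. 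Hence $C$ and $U$ exist on the whole interval of existence of $(X,Y)$, and no blow-up of the auxiliary variables can shrink the domain.

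Next I will compare the return times. Because $\hat\Sigma(X,Y,U,C)=\Sigma(X,Y)$ and the $(X,Y)$-trajectories coincide, the set $\{t>0 : \hat\Sigma(\hat\phi_t(X_0,Y_0,1,0))=0\}$ equals $\{t>0 : \Sigma(\phi_t(X_0,Y_0))=0\}$. Consequently $T_{\hat\Sigma}(X_0,Y_0,1,0)=T_\Sigma(X_0,Y_0)=:T$, and the two Poincar\'e maps share a domain, namely the point lies in $\dom P_\Sigma$ iff its extension lies in $\dom\hat P_{\hat\Sigma}$. This gives $\pi_X P_\Sigma(X_0,Y_0)=X(T)$ and $\pi_U\hat P_{\hat\Sigma}(X_0,Y_0,1,0)=U(T)$.

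Finally I will evaluate (\ref{eq:solution_x}) at $t=T$, which yields $X(T)=X_0+\mathcal{B}(X_0)U(T)$. Substituting the two identities from the previous step gives (\ref{eq:poincare_from_poincare}).

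The only delicate point is the domain and uniqueness argument: that the extended flow is defined exactly as long as the original one, and that the infimum defining the return time is unaffected. I expect this to follow from the linearity of the auxiliary equations, assuming enough smoothness of $\mathcal{A},\mathcal{B},\mathcal{F}$ (at least $C^1$, with $\mathcal{B}$ differentiable so that (\ref{eq:ode_c}) makes sense).
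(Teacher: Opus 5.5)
Your proposal is correct and follows the route the paper intends. The paper gives no separate proof of Corollary \ref{col:exp_trick_Poincare} and treats it as an immediate consequence of Theorem \ref{thm:exp_trick}; your argument (the $(X,Y)$-equations decouple, the return times coincide because $\hat{\Sigma}$ depends only on $(X,Y)$, and (\ref{eq:solution_x}) is evaluated at that common return time) is exactly the omitted verification, and you implicitly read $\hat{P}_{\hat{\Sigma}}(X,Y,1,0)$ as $C=1$, $U=0$, which is the right reading despite the argument-order mismatch in the statement (it matches the initial conditions of Theorem \ref{thm:exp_trick} and the map $\hat{\psi}_\theta$ in Lemma \ref{lem:f_theta_in_D}).
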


Let $f_\theta$ be the covering map for the h-sets contained in $\Sigma_\theta \cap \{ \Gamma_{2,h_0} = 0\}$, i.e.:
\begin{equation}
    f_\theta = \eta_{-L} \circ \psi_\theta^{-1} \circ P_{\Sigma_\theta} \circ \psi_\theta \circ \eta_{L},
\end{equation}
(see (\ref{eq:psi_alpha_def}) for the definition of $\psi_\theta$).

The inverse of $\psi_\theta$ is simply the projection onto the first two coordinates, as it was the case for the other mappings $\psi$ which parameterize other sections in our construction.

Presence of the factor $\eta_{L}$ is due to the fact that all sets contained in the section $\Sigma_\theta \cap \{ \Gamma_{2,h_0} = 0\}$ shall be scaled by it (see the definition of the set $\mathcal{M}_0$ in (\ref{eq:def_M_0})).

In the lemma below we present the outcome of the application of the corollary \ref{col:exp_trick_Poincare} to the mapping $P_{\Sigma_\theta}$, which enables us to obtain the information about the dynamic of the mapping $f_\theta$ in the neighborhood of the point $(0,0)$, which corresponds to the parabolic limit point.

\begin{lemma}
    \label{lem:f_theta_in_D}
    For $\xi, \zeta \in \mathcal{D} = \left[ -41 \cdot 2^{-11}, 19 \cdot 2^{-21} \right] \times [0, 80 \cdot 2^{-11}]$, the mapping $f_\theta$ satisfies the relationship:
    \[
        f_\theta (\xi, \zeta) \in 
        \left(Id + (\xi + \zeta)^3 \begin{pmatrix} a_\xi & -b_\xi \\ b_\zeta & -a_\zeta \end{pmatrix} \right) \begin{pmatrix} \xi \\ \zeta \end{pmatrix},
    \]
    where $a_\xi, b_\xi, a_\zeta, b_\zeta$ are the functions of $\xi, \zeta$ and their images satisfy the inclusions:
    \[
        a_\xi \left( \mathcal{D} \right), a_\zeta \left( \mathcal{D} \right) \subset \hat{a} = \left[ \frac{1608}{8192}, \frac{1609}{8192} \right],
    \]
    \[
        b_\xi \left( \mathcal{D} \right), b_\zeta \left( \mathcal{D} \right) \subset \hat{b} = \left[ \frac{3}{2^{20}}, \frac{43}{2^{20}} \right].
    \]
    \begin{proof}
        Let $(\xi, \zeta, \theta, p_\theta, \varsigma, \xi', \zeta')$ be the solutions of the extended system of the ordinary equations (\ref{eq:vector_field_aligned}) according to the Theorem \ref{thm:exp_trick}, i.e.:
        \[
            X = (\xi, \zeta), \quad Y = (\theta, p_\theta), \quad C = \varsigma, \quad U = (\xi', \zeta').
        \]
        Similarily, we extend the mapping $\psi_\theta$, by the introduction of the mapping $\hat{\psi}_\theta$:
        \[
            \hat{\psi}_\theta (\xi, \zeta) = \left(\xi, \zeta, \theta, E_H \left( h_0, \frac{1}{2} (\zeta + \xi), \theta, \frac{1}{2} (\zeta - \xi) \right), 1, 0, 0 \right), \quad \theta = 0
        \]
        and we make use of the relationship (\ref{eq:poincare_from_poincare}) to notice that:
        \[
            f_\theta (\xi, \zeta) =
                (\xi, \zeta) + (\xi + \zeta)^3 \left( \pi_{\xi', \zeta'} \circ \hat{P}_{\hat{\Sigma}_\theta} \circ \hat{\psi}_\theta \right) (\xi, \zeta).
        \]
        The fact that $\left( \pi_{\xi', \zeta'} \circ \hat{P}_{\hat{\Sigma}_\theta} \circ \hat{\psi}_\theta \right)(0,0) = (0,0)$ implies that:
        \[
            \left( \pi_{\xi', \zeta'} \circ \hat{P}_{\hat{\Sigma}_\theta} \circ \hat{\psi}_\theta \right) (\xi, \zeta) \in 
            \underbrace{D_{\xi', \zeta'} \left( \pi_{\xi', \zeta'} \circ  \hat{P}_{\hat{\Sigma}_\theta} \circ \hat{\psi}_\theta \right) (\xi, \zeta)}_{\subset \begin{pmatrix} \hat{a} & -\hat{b} \\ \hat{b} & -\hat{a} \end{pmatrix} } \begin{pmatrix} \xi \\ \zeta \end{pmatrix}.
        \]
        Numeric values of the intervals $\hat{a}$ oraz $\hat{b}$ are determined with the use of rigorous numeric computations provided by the CAPD library, which concludes our proof.

        The computer assisted validation of the conditions takes less than 1 minute execution time. The code is available at \cite{pcr3bp_code}. 
    \end{proof}
\end{lemma}

We can perform an additional nonrigorous verification of the correctness of the lemma \ref{lem:f_theta_in_D}. The differential equation (\ref{eq:ode_c}) implies that the value $C = \varsigma$ is approximately constant and equal to $1$. This is the consequence of the presence of the term $D\mathcal{B}(X) = (3\xi^2, 3\zeta^2)$, which approaches zero quickly in the neighborhood of the point $(\xi,\zeta) = (0,0)$. This means that the expected value of the coordinate $U = (\xi', \zeta')$ is approximately equal $1/32 \cdot \tau \cdot (\xi,\zeta)$, where $\tau$ is the integration time. Presence of the factor $1/32$ comes from the form of the differential equations for variables $\xi$ and $\zeta$ (see (\ref{eq:vector_field_aligned})).

Integration time for the Poincar\'e mapping from the section $\Sigma_\theta$ onto $\Sigma_\theta$ is approximately equal $2\pi$, which comes from the form of the differential equation for the variable $\theta$ (see (\ref{eq:vector_field_aligned})). As a result, we get the expected approximate value of the interval $\hat{a}$ equal $\pi / 16$, which agrees with the value obtained in the lemma \ref{lem:f_theta_in_D}.

We notice that the bounds for the off-diagonal terms (interval $\hat{b}$) are not symmetric: the value $0$ is not contained in them. This is the consequence of the use of the term $\eta_{L}$, which ``tilts" the stable and unstable directions of the point $(\xi,\zeta) = (0,0)$. We shall see that this is an essential element of our construction to get the correct image bounds which shall imply the existence of the covering relations.

Before we present our construction of the sequence of the h-sets which approaches the point $(\xi,\zeta) = (0,0)$, we present an auxiliary lemma.

\begin{lemma}
    \label{lem:u_sequence}
    Let $u_n$ be the sequence of real numbers defined with the formula:
    \[
        u_n^{-\kappa} - u_0^{-\kappa} = \kappa \nu n,
    \]
    where $\kappa > 0$, $\nu > 0$, $u_0 > 0$. Then
    \begin{equation}
        \label{eq:un_1}
        u_{n+1} \geq u_n - \nu u_n^{\kappa+1}.
    \end{equation}
    \begin{proof}
        Recursive formula for the sequence $u_n$ can be obtained straight from the equation
        \[
            u_{n+1}^{-\kappa} - u_n^{-\kappa} = \kappa \nu,
        \]
        which gives:
        \[
            u_{n+1} = u_n \left( 1 + \kappa \nu u_n^\kappa \right)^{-1/\kappa}.
        \]
        We define the function $\delta(z) = (1+\kappa \nu z)^{-1/\kappa}$ and we notice that it is convex. The inequality (\ref{eq:un_1}) is directly implied by the inequality 
        \[
            \delta(z) \geq \delta(0) + D\delta(0) z, \quad z \in \mathbb{R},
        \]
        which is the general property of the convex functions.
    \end{proof}
\end{lemma}

\begin{theorem}
    \label{thm:M_covrel}
    Let $\mathcal{M}$ be the h-set on section $\Sigma_\theta$ defined by:
    \[
        \mathcal{M} = \big( \eta_{L} [0, \xi_M] \times [\zeta_e, \zeta_M], \psi_\theta \big),
    \]
    where $[0, \xi_M] \times [\zeta_e, \zeta_M] \subset \mathcal{D}$ and:
    \begin{equation}
        \label{eq:w_b_a_xi}
        \xi_M > \frac{\hat{b}}{\hat{a}} \zeta_M,
    \end{equation}
    \begin{equation}
        \label{eq:zeta_e_M}
        0 < \zeta_e < \zeta_M,
    \end{equation}
    \begin{equation}
        \label{eq:a_w_xi_3}
        \hat{a} \left( \xi_M + \zeta_M \right)^3 \subset (0, 1).
    \end{equation}
    Let $\lambda$ be the fixed positive real value such that:
    \begin{equation}
        \label{eq:a_lambda_b}
        \hat{a}\lambda > \hat{b},
    \end{equation}
    \begin{equation}
        \label{eq:y_y3_inc}
        3 \zeta_M^2 \left( \hat{a}\lambda - \hat{b} \right) < 1,
    \end{equation}
    \begin{equation}
        \label{eq:a_lambda_b_w_xi}
        1 - \left( \hat{a} \lambda - \hat{b} \right) \xi_M \zeta_M^2 > 0,
    \end{equation}
    \begin{equation}
        \label{eq:lambda_xi_M_3}
        \lambda + \xi_M^3 \left( \lambda + 1 \right)^3 \hat{b} < 1.
    \end{equation}
    Then, there exists a positive real value $\zeta_e'$ and the finite sequence of the covering relations:
    \[
        \mathcal{M} \xRightarrow{P_\theta} ... \xRightarrow{P_\theta} \mathcal{M'},
    \]
    where $\mathcal{M'} = \left( \eta_{L} \left[ 0, \xi_M \right] \times \left[\zeta_e', \left(\lambda + \xi_M^3 (\lambda + 1)^3 \hat{b} \right) \xi_M \right], \psi_\theta \right)$ (see the Figure \ref{fig:covrel_at_infinity_single} for the depiction of this theorem).
    \begin{proof}
        The assumption (\ref{eq:a_lambda_b}) implies that there exists a positive real value $\lambda_C$, which satisfies
        \begin{equation}
            \label{eq:c_lambda}
            \lambda_C < \left( \hat{a}\lambda - \hat{b} \right) \xi_M.
        \end{equation}
        The assumptions (\ref{eq:a_w_xi_3}) and (\ref{eq:a_lambda_b_w_xi}) imply that there exists a positive real value $\lambda_E$, which satisfies
        \begin{equation}
            \label{eq:lambda_2}
            \lambda_E < 1 - \hat{a} \left( \xi_M+\zeta_M \right)^3,
        \end{equation}
        \begin{equation}
            \label{eq:lambda_2_E}
            \lambda_E < 1 - \lambda_C \xi_M \zeta_M^2.
        \end{equation}
        We define the sequence of h-sets $\mathcal{M}'_n$ by:
        \[
            \mathcal{M}'_n = \big( \eta_{L} [0, \xi_M] \times [e_n, \zeta_n], \psi_\theta \big),
        \]
        where $e_n$ and $\zeta_n$ are the sequences of the real numbers which satisfy
        \[
            \zeta_n^{-2} - \zeta_0^{-2} = 2 \lambda_C  n, \quad \zeta_0 = \zeta_M, \quad
            e_n = \lambda_E^n \zeta_e.
        \]
        According to the lemma \ref{lem:u_sequence}, the sequence $\zeta_n$ satisfies the inequality:
        \[
            \zeta_{n+1} > \zeta_n - \lambda_C \zeta_n^3.
        \]
        The sequence of h-sets $\mathcal{M}'_n$ is well defined as long as:
        \[
            \zeta_n > e_n
        \]
        is satisfies for each $n$. This is indeed the case, because $\zeta_0 > e_0$ according to the assumption (\ref{eq:zeta_e_M}), and for the next elements, the following recursive relation is true:
        \[
            \zeta_n > e_n \implies \zeta_{n+1} >
            \zeta_n - \lambda_C \zeta_n^3 >
            e_n \left( 1 - \lambda_C \zeta_M^2 \right) >
            e_n \lambda_E = e_{n+1},
        \]
        where the last inequality is implied by the inequality (\ref{eq:lambda_2_E}).

        We show that $\mathcal{M}'_n \xRightarrow{P_\theta} \mathcal{M}'_{n+1}$ for $\zeta_n \geq \left(\lambda + \xi_M^3 (\lambda + 1)^3 \hat{b} \right) \xi_M$ by the direct verification of the conditions of the covering relation. We introduce variables $\tilde{\xi} ,\tilde{\zeta}$ such that:
        \[
            \tilde{\xi} \in [0,\xi_M] \quad\text{oraz}\quad \tilde{\zeta} \in [e_n, \zeta_n].
        \]
        We simplify the notation by skipping the arguments of the functions $a_\xi$, $b_\xi$, $a_\zeta$ and $b_\zeta$. In each case they are identical with the arguments of the function $f_\theta$.

        Left expansion condition is directly implied by the estimate:
        \[
            \pi_\xi f_\theta \left( 0,\tilde{\zeta} \right) = \tilde{\zeta}^3 \cdot \left(-b_\xi \tilde{\zeta} \right) < 0.
        \]
        We verify the right expansion condition in an anlogous way:
        \[
            \pi_\xi f_\theta \left( \xi_M,\tilde{\zeta} \right) =
            \xi_M + \left( \xi_M+\tilde{\zeta} \right)^3 \left( a_\xi \xi_M-b_\xi \tilde{\zeta} \right) > \xi_M,
        \]
        where we made use of the assumption (\ref{eq:w_b_a_xi}). The contraction condition takes form:
        \begin{equation}
            \label{eq:m_covrel_contraction}
            e_{n+1} < \pi_\zeta f_\theta \big( [0,\xi_M], [e_n, \zeta_n] \big) < \zeta_{n+1}.
        \end{equation}
        We verify the left inequality in (\ref{eq:m_covrel_contraction}) by performing the following estimate:
        \begin{equation*}
            \begin{split}
                \pi_\zeta f_\theta \left( \tilde{\xi},\tilde{\zeta} \right) &= 
                \tilde{\zeta} + \left(\tilde{\xi}+\tilde{\zeta} \right)^3 \left(b_\zeta \tilde{\xi} - a_\zeta \tilde{\zeta} \right)\\ &\geq 
                \tilde{\zeta} - a_\zeta \left( \tilde{\xi}+\tilde{\zeta} \right)^3 \tilde{\zeta} \geq
                \tilde{\zeta} \left(1 - a_\zeta \left(\xi_M+\tilde{\zeta} \right)^3 \right) >
                e_n \lambda_E = e_{n+1},
            \end{split}
        \end{equation*}
        where the last inequality is directly implied by the inequality (\ref{eq:lambda_2}).
        
        In order to verify the right inequality in (\ref{eq:m_covrel_contraction}) we consider two cases. If $\tilde{\zeta} < \lambda \xi_M$, then we get:
        \begin{equation}
            \label{eq:pi_zeta_contr}
            \begin{split}
                \pi_\zeta f_\theta \left( \tilde{\xi},\tilde{\zeta} \right) &\leq
                \tilde{\zeta} + \left( \tilde{\xi}+\tilde{\zeta} \right)^3 \cdot b_\zeta \tilde{\xi}\\ &\leq
                \tilde{\zeta} + \left( \tilde{\zeta}+\xi_M \right)^3 \cdot b_\zeta \xi_M <
                \xi_M \left( \lambda + \xi_M^3 (\lambda + 1)^3 b_\zeta \right).
            \end{split}
        \end{equation}
        If $\tilde{\zeta} \geq \lambda \xi_M$:
        \begin{equation*}
            \begin{split}
                \pi_\zeta f_\theta \left( \tilde{\xi},\tilde{\zeta} \right) &\leq
                \tilde{\zeta} + \left( \tilde{\xi}+\tilde{\zeta} \right)^3 \left( b_\zeta - a_\zeta \lambda \right) \xi_M \\ &\leq
                \tilde{\zeta} - \tilde{\zeta}^3 \left( a_\zeta \lambda - b_\zeta \right)\xi_M \leq
                \zeta_n - \zeta_n^3 \left(a_\zeta \lambda - b_\zeta \right) \xi_M,
            \end{split}
        \end{equation*}
        where the last inequality is the consequence of the fact that the function
        \[
            \tilde{\zeta} \mapsto \tilde{\zeta} - \tilde{\zeta}^3 (a_\zeta \lambda - b_\zeta)\xi_M
        \]
        is strictly increasing, which is implied by the assumption (\ref{eq:y_y3_inc}). We notice that the inequality (\ref{eq:c_lambda}) implies that:
        \[
            \zeta_n - \zeta_n^3 (a_\zeta \lambda - b_\zeta )\xi_M < \zeta_n - \lambda_C \zeta_n^3 < \zeta_{n+1}.
        \]
        The sequence $\zeta_n$ converges to zero, which means that the sufficiently large $n$, we get:
        \[
            \pi_\zeta f_\theta \left( \tilde{\xi},\tilde{\zeta} \right) <
            \xi_M \left( \lambda + \xi_M^3 (\lambda + 1)^3 b_\zeta \right)
        \]
        according to (\ref{eq:pi_zeta_contr}), what together with the assumption (\ref{eq:lambda_xi_M_3}) concludes the proof.
    \end{proof}
\end{theorem}

\begin{figure}
    \begin{center}
        \begin{tikzpicture}[scale=0.6]

            \filldraw[thick, blue!15] (0,0) -- (0,7.5) -- (1.5,7.5) -- cycle;
            \filldraw[thick, blue!15] (0,0) -- (-4.0,0.0) -- (-4.0,-0.8) -- cycle;
            
            \filldraw[thick, red!15] (0,0) -- (5.0,0) -- (5.0,1.0) -- cycle;
            \filldraw[thick, red!15] (0,0) -- (0,-2.5) -- (-0.5,-2.5) -- cycle;

            \draw[thin, black, dashed] (-2.5,0) rectangle (4.5,6.5);
            \node[anchor=south west] at (4.5,6.5) {$\mathcal{D}$};

            \begin{scope}[thick,decoration={
                markings,
                mark=at position 0.5 with {\arrow{Latex[length=3mm]}}}
                ] 
                \draw[red, postaction={decorate}] (0,0) -- (5.0,0.5);
            \end{scope}

            \begin{scope}[thick,decoration={
                markings,
                mark=at position 0.55 with {\arrowreversed{Latex[length=3mm]}}}
                ] 
                \draw[blue, postaction={decorate}] (0,0) -- (0.75,7.5);
            \end{scope}

            \begin{scope}[thick,decoration={
                markings,
                mark=at position 0.1 with {\arrowreversed{Latex[length=3mm]}}}
                ] 
                \draw[red, postaction={decorate}] (-0.25,-2.5) -- (0,0);
            \end{scope}

            \begin{scope}[thick,decoration={
                markings,
                mark=at position 0.5 with {\arrow{Latex[length=3mm]}}}
                ] 
                \draw[blue, postaction={decorate}] (-4.0,-0.4) -- (0,0);
            \end{scope}

            \node[anchor=center, rotate=-45] at (2.0, -2.5){ $\chi = 0$ };
            \node[anchor=center, rotate=45] at (-2.0, -2.5){ $\xi = \zeta$ };

            \draw[thin, -] (0, 0) -- (5.0, 1.0) node[above right]{$\zeta = \lambda \xi$};
            \draw[thin, -] (0, 0) -- (1.5, 7.5) node[right]{$\xi = \lambda \zeta$};
            \draw[thin, -] (3, 5.5) -- (3, 0) node[below]{$\xi_M$};

            \draw[thin, gray, dashed] (0, 0) -- (3, 5.5);

            \draw[thin, gray, dashed] (2.5,-2.5) -- (-2.5,2.5);
            
            \draw[thick, -Latex] (-5.0, 0) -- (6.5, 0) node[right]{$\xi$};
            
            \draw[thick, -Latex] (0, -3,0) -- (0, 8.5) node[right]{$\zeta$};
            
            \draw[thin, gray, dash dot] (-2.5,-2.5) -- (5,5);

            \node[anchor=east] at (0, 5.5){ $\zeta_M$ };
            \node[anchor=east] at (0, 4.0){ $\zeta_e$ };
            
            \node[anchor=east] at (-3.0, 2.0){ $\left(\lambda + \xi_M^3 (\lambda + 1)^3 \hat{b} \right) \xi_M$ };
            \draw[thin, -Latex] (-3.0, 2.0) -- (-0.2, 2.0);
            
            \node[anchor=east] at (-3.0, 0.5){ $\zeta'_e$ };
            \draw[thin, -Latex] (-3.0, 0.5) -- (-0.2, 0.5);
            
            \node[anchor=west] at (3, 5.0){ $\mathcal{M}$ };
            \hsetrect{0}{3.0}{4.0}{5.5}{red}

            \node[anchor=west] at (3, 1.7){ $\mathcal{M}'$ };
            \hsetrect{0}{3.0}{0.4}{2.1}{green}

        \end{tikzpicture}
    \end{center}
    \caption{An illustration of the Theorem \ref{thm:M_covrel}: we construct the finite sequence of the h-sets connected with the covering relations. The sequence begins with the h-set $\mathcal{M}$ and ends with the h-set $\mathcal{M}'$.}
    \label{fig:covrel_at_infinity_single}
\end{figure}

\begin{corollary}
    \label{col:covrel_M_W}
    There exists a positive real sequence $\varepsilon_n$ which converges to zero, and the sequences of h-sets $\mathcal{M}_n$ and $\mathcal{W}_n$ defined with the formulas:
    \[
        \mathcal{M}_n = \Big( \eta_{L} \left[ 0, \lambda_S^n \xi_M \right] \times \left[ \varepsilon_n, \lambda_S^n \zeta_M \right], \psi_\theta \Big),
    \]
    \[
        \mathcal{W}_n = \Big( \eta_{L} \left[ \varepsilon_n, \lambda_S^n \zeta_M \right] \times \left[ \varepsilon_n, \lambda_S^n \zeta_M \right], \psi_\theta \Big),
    \]
    where $\lambda_S = \sup \left(\lambda + \xi_M^3 (\lambda + 1)^3 \hat{b} \right) \xi_M / \zeta_M$ and $\mathcal{M}_0 = [0, \xi_M] \times [\varepsilon_0, \zeta_M]$ is the h-set defined in (\ref{eq:def_M_0}), such that there exist sequences of covering relations:
    \[
        \mathcal{M}_n \xRightarrow{P_\theta} ... \xRightarrow{P_\theta} \mathcal{M}_{n+1},
    \]
    \[
        \mathcal{M}_n \xRightarrow{P_\theta} ... \xRightarrow{P_\theta} \mathcal{W}_{n+1}
    \]
    (see the Figure \ref{fig:covrel_at_infinity_sequence} for the depiction of this theorem).
    \begin{proof}
        We take $\lambda = 7 \cdot 2^{-15}$ and we make use of the rigorous numeric computations to verify that the assumptions of the Theorem \ref{thm:M_covrel} are fulfilled for the h-set $\mathcal{M}_0$, i.e. the inequalities (\ref{eq:w_b_a_xi})--(\ref{eq:lambda_xi_M_3}) are satisfied.

        We notice that if the assumptions are satified for the h-set $\mathcal{M}_0$ then they are satisfied for each $\mathcal{M}_n$, for the chosen value of $\lambda$. 

        The existence of the sequence of the covering relations for some positive real value $\varepsilon_n'$:
        \[
            \mathcal{M}_n \xRightarrow{P_\theta} ... \xRightarrow{P_\theta} \Big( \eta_{L} \left[ 0, \lambda_S^n \xi_M \right] \times \left[ \varepsilon_n', \lambda_S^{n+1} \zeta_M \right], \psi_\theta \Big),
        \]
        for each $\mathcal{M}_n$ is directly implied by the Theorem \ref{thm:M_covrel} which concludes our proof.
    \end{proof}
\end{corollary}

\begin{remark}
    Every h-set $\mathcal{W}_n$ is self $S$-symmetric which is due to the fact that its underlying local coordinate system is $\psi_\theta$ (see Remark \ref{rem:psi_theta_S_symmetry} for details).
\end{remark}

\begin{figure}[H]
    \begin{center}
        \begin{tikzpicture}[scale=0.7]

            \filldraw[thick, blue!15] (0,0) -- (0,7.5) -- (1.5,7.5) -- cycle;
            \filldraw[thick, blue!15] (0,0) -- (-4.0,0.0) -- (-4.0,-0.8) -- cycle;
            
            \filldraw[thick, red!15] (0,0) -- (5.0,0) -- (5.0,1.0) -- cycle;
            \filldraw[thick, red!15] (0,0) -- (0,-2.5) -- (-0.5,-2.5) -- cycle;

            \draw[thin, black, dashed] (-2.5,0) rectangle (4.5,6.5);
            \node[anchor=south west] at (4.5,6.5) {$\mathcal{D}$};

            \begin{scope}[thick,decoration={
                markings,
                mark=at position 0.5 with {\arrow{Latex[length=3mm]}}}
                ] 
                \draw[red, postaction={decorate}] (0,0) -- (5.0,0.5);
            \end{scope}

            \begin{scope}[thick,decoration={
                markings,
                mark=at position 0.55 with {\arrowreversed{Latex[length=3mm]}}}
                ] 
                \draw[blue, postaction={decorate}] (0,0) -- (0.75,7.5);
            \end{scope}

            \begin{scope}[thick,decoration={
                markings,
                mark=at position 0.1 with {\arrowreversed{Latex[length=3mm]}}}
                ] 
                \draw[red, postaction={decorate}] (-0.25,-2.5) -- (0,0);
            \end{scope}

            \begin{scope}[thick,decoration={
                markings,
                mark=at position 0.5 with {\arrow{Latex[length=3mm]}}}
                ] 
                \draw[blue, postaction={decorate}] (-4.0,-0.4) -- (0,0);
            \end{scope}

            \node[anchor=center, rotate=-45] at (2.0, -2.5){ $\chi = 0$ };
            \node[anchor=center, rotate=45] at (-2.0, -2.5){ $\xi = \zeta$ };

            \draw[thin, -] (0, 0) -- (5.0, 1.0) node[above right]{$\zeta = \lambda \xi$};
            \draw[thin, -] (0, 0) -- (1.5, 7.5) node[right]{$\xi = \lambda \zeta$};
            \draw[thin, -] (3, 5.5) -- (3, 0) node[below]{$\xi_M$};
            \draw[thin, -] (6.3/5.5, 2.1) -- (6.3/5.5, 0) node[below]{$\lambda_S \xi_M$};

            \draw[thin, gray, dashed] (0, 0) -- (3, 5.5);

            \draw[thin, gray, dashed] (2.5,-2.5) -- (-2.5,2.5);
            
            \draw[thick, -Latex] (-5.0, 0) -- (6.5, 0) node[right]{$\xi$};
            
            \draw[thick, -Latex] (0, -3.0) -- (0, 8.5) node[right]{$\zeta$};
            
            \draw[thin, gray, dash dot] (-2.5,-2.5) -- (5,5);

            \node[anchor=east] at (0, 5.5){ $\zeta_M$ };
            \node[anchor=east] at (0, 4.0){ $\varepsilon_0$ };
            
            \node[anchor=east] at (-3.0, 2.2){ $\lambda_S \zeta_M$ };
            \draw[thin, -Latex] (-3.0, 2.1) -- (-0.2, 2.1);

            \node[anchor=east] at (-3.0, 0.5){ $\varepsilon_1$ };
            \draw[thin, -Latex] (-3.0, 0.5) -- (-0.2, 0.5);
            
            \node[anchor=west] at (3, 5.0){ $\mathcal{M}_0$ };
            \hsetrect{0}{3.0}{4.0}{5.5}{red}

            \node[anchor=east] at (0.0, 1.3){ $\mathcal{M}_1$ };
            \hsetrect{0.4}{2.1}{0.4}{2.1}{green}

            \node[anchor=south] at (2.1, 2.1){ $\mathcal{W}_1$ };
            \hsetrect{0}{6.3/5.5}{0.4}{2.1}{blue}

        \end{tikzpicture}
    \end{center}
    \caption{An illustration of the Corollary \ref{col:covrel_M_W} for the initial elements of the sequences $\mathcal{M}_k$ and $\mathcal{W}_k$: h-set $\mathcal{M}_0$ (red) $f_\theta$-covers h-set $\mathcal{M}_1$ (blue) and an $S$-symmetric h-set $\mathcal{W}_1$ (green). We emphasize that all sets $\mathcal{M}_k$ and $\mathcal{W}_k$ are the subsets of the domain $\mathcal{D}$.}
    \label{fig:covrel_at_infinity_sequence}
\end{figure}

We conclude this section by discussing the subject of the hyperbolic evolution. We consider left exit set for the h-set $\mathcal{M}_0$, which is the disc $\mathcal{M}_0^l$. We shall show that the image of the set $\mathcal{M}_0^l$ under the composition of the function $f_\theta$ an arbitrary number of times shall be the subset of a suitable cone. This shall imply that the trajectories are the hyperbolic trajectories.

\begin{theorem}
    \label{lem:hyperbolic_evolution}
    For each point $\left( 0,\tilde{\zeta} \right) \in M_0^l$ there exists $\omega > 0$ such that:
    \[
        f_\theta^n \left( 0,\tilde{\zeta} \right) \xrightarrow{n \to \infty} (-\omega,\omega)
    \]
    \begin{proof}
        We notice that
        \[
            \cot \measuredangle \left( f_\theta (\xi,\zeta) - (\xi,\zeta) \right) \in \frac{\hat{a} \xi/\zeta - \hat{b}}{\hat{b} \xi/\zeta - \hat{a}},
        \]
        where $\measuredangle$ is the argument of the vector on the plane (we recall Lemma \ref{lem:hyperbolic_evolution} for the definition of $\hat{a}$ and $\hat{b}$). The inclusion $\xi/\zeta \in [-1,0]$ is true for the points $(\xi,\zeta)$ which satify
        \[
            \measuredangle(\xi,\zeta) \in \left[ \frac{\pi}{2}, \frac{3\pi}{4} \right], \quad\text{for}\quad \zeta > 0.
        \]
        We make use of this fact and we use rigorous numerical computations to get the bounds:
        \[
            \cot \measuredangle \left( f_\theta (\xi,\zeta) - (\xi,\zeta) \right) \in \left[ \frac{15}{2^{20}}, 1 +\frac{885}{2^{20}} \right],
        \]
        which means that for each point $\left( 0,\tilde{\zeta} \right) \in M_0^l$, the orbit $f_\theta^n \left( 0,\tilde{\zeta} \right)$ is the subset of the cone
        \[
            \frac{15}{2^{20}} \, \left(\zeta - \tilde{\zeta}\right) \geq \xi \geq \left( 1 +\frac{885}{2^{20}} \right) \left(\zeta - \tilde{\zeta}\right) \quad\text{for}\quad \xi \leq 0,
        \]
        which concludes our proof (see Figure \ref{fig:hyperbolic_evolution}).
    \end{proof}
\end{theorem}

\begin{figure}[h]
    \begin{center}
        \begin{tikzpicture}[scale=0.5]

            \filldraw[thick, blue!15] (0,0) -- (0,7.5) -- (1.5,7.5) -- cycle;
            \filldraw[thick, blue!15] (0,0) -- (-5.0,0.0) -- (-5.0,-1.0) -- cycle;
            
            \filldraw[thick, orange!25] (0,4.0) -- (0,5.5) -- (-4.0,4.0) -- (-1.2,1.2) -- cycle;

            \filldraw[thick, green!25] (0,4.5) -- (-3.3,3.3) -- (-1.4,1.4) -- cycle;
            
            \filldraw[thick, red!15] (0,0) -- (5.0,0) -- (5.0,1.0) -- cycle;
            \filldraw[thick, red!15] (0,0) -- (0,-2.5) -- (-0.5,-2.5) -- cycle;

            \draw[thin, black, dashed] (-4.7,0) rectangle (4,6.5);
            \node[anchor=south west] at (4,6.5) {$\mathcal{D}$};

            \begin{scope}[thick,decoration={
                markings,
                mark=at position 0.5 with {\arrow{Latex[length=3mm]}}}
                ] 
                \draw[red, postaction={decorate}] (0,0) -- (5.0,0.5);
            \end{scope}

            \begin{scope}[thick,decoration={
                markings,
                mark=at position 0.55 with {\arrowreversed{Latex[length=3mm]}}}
                ] 
                \draw[blue, postaction={decorate}] (0,0) -- (0.75,7.5);
            \end{scope}

            \begin{scope}[thick,decoration={
                markings,
                mark=at position 0.1 with {\arrowreversed{Latex[length=3mm]}}}
                ] 
                \draw[red, postaction={decorate}] (-0.25,-2.5) -- (0,0);
            \end{scope}

            \begin{scope}[thick,decoration={
                markings,
                mark=at position 0.6 with {\arrow{Latex[length=3mm]}}}
                ] 
                \draw[blue, postaction={decorate}] (-5.0,-0.5) -- (0,0);
            \end{scope}

            \draw[thick, -Latex] (-6,0, 0) -- (6.0, 0) node[right]{$\xi$};
            
            \draw[thick, -Latex] (0, -3,0) -- (0, 8.5) node[right]{$\zeta$};
            
            \draw[thin, gray, dash dot] (-2.5,-2.5) -- (5,5);
            
            \draw[thin, black] (2.5,-2.5) -- (-6.0,6.0);
            
            \hsetrect{0}{3.0}{4.0}{5.5}{red}
            \node[anchor=south] at (1.5,5.5) {$\mathcal{M}_0$};
            
            \node[anchor=south] at (-1.5,7.0) {$\mathcal{M}^l_0$};
            \draw[thin, black, -Latex] (-1.5,7.0) -- (0,5.0);

            \node[anchor=south] at (-3.5,7.0) {$(0, \tilde{\zeta})$};
            \draw[thin, black, -Latex] (-3.5,7.0) -- (0,4.5);

            \node[anchor=south east] at (-6.0,6.5) {$\xi = \left( 1 +\frac{885}{2^{20}} \right) \left( \zeta - 79 \cdot 2^{-11} \right)$};
            \draw[thin, black, -Latex] (-6.0,6.5) -- (-2.0,4.8);

            \node[anchor=east] at (-6.0,4.5) {$\xi = \left( 1 +\frac{885}{2^{20}} \right) \left( \zeta - \tilde{\zeta} \right)$};
            \draw[thin, black, -Latex] (-6.0,4.5) -- (-2.0,3.8);

            \node[anchor=east] at (-7.0,3.0) {$\xi = \frac{15}{2^{20}} \left( \zeta - \tilde{\zeta} \right)$};
            \draw[thin, black, -Latex] (-7.0,3.0) -- (-0.7,3.0);

            \node[anchor=east] at (-5.0,1.5) {$\xi = \frac{15}{2^{20}} \left( \zeta - 75 \cdot 2^{-11} \right)$};
            \draw[thin, black, -Latex] (-5.0,1.5) -- (-0.7,2.5);
            
        \end{tikzpicture}
    \end{center}
    \caption{An illustration of the Theorem \ref{lem:hyperbolic_evolution}. The orbit which begins at the point $\left( 0, \tilde{\zeta}\right)$ is the subset of the green cone, orbits which begin in the set $\mathcal{M}_0^l$ are the subsets of the orange cone. We emphasize that both cones are the subsets of the domain $\mathcal{D}$, which asserts the correctness of the numeric computations (see the assumptions of the Lemma \ref{lem:f_theta_in_D}). The values $75 \cdot 2^{-11}$ and $79 \cdot 2^{-11}$ are the $\zeta$ coordinates of the exit set $\mathcal{M}_0^l$ (see (\ref{eq:def_M_0}))}
    \label{fig:hyperbolic_evolution}
\end{figure}

\subsection{The proof of Theorem \ref{thm:primary_result}}
\label{subsec:main_proof}
We are ready to present the proof of the main result contained in this work.

We introduce the notation $\mathcal{N}_2 \rightarrow \mathcal{W}_n$ for the sequence of the covering relations:
\[
    \mathcal{N}_2 \xRightarrow{P} \mathcal{N}_{19} \xRightarrow{P} \mathcal{N}_{20} \xRightarrow{P} ... \xRightarrow{P} \mathcal{N}_{29} \xRightarrow{P}
    \mathcal{M}_0 \xRightarrow{P} \mathcal{M}_1 \xRightarrow{P} ... \xRightarrow{P} \mathcal{M}_{n-1} \xRightarrow{P} \mathcal{W}_n,
\]
which is the combination of the sequences of the covering relations (\ref{eq:covrel_periodic_to_infinity}), (\ref{eq:covrel_jump_to_infinity}) and the sequence of the covering relations described in the Corollary \ref{col:covrel_M_W}. We introduce the notation $\mathcal{N}_2 \rightarrow \mathcal{Q}_{4k}$ for the sequence of the covering relations:
\[
    \mathcal{N}_2 \xRightarrow{P} ... \xRightarrow{P} \mathcal{N}_K \xRightarrow{P} ... \xRightarrow{P} S\mathcal{N}_4 \xRightarrow{P} \mathcal{R}_1 \xRightarrow{P} \mathcal{R}_2 \xRightarrow{P} ... \xRightarrow{P} \mathcal{Q}_{4k},
\]
which is the combination of the suitable contiguous subsequences of the covering relations (\ref{eq:covering_sequence_o1}), (\ref{eq:covering_sequence_o2}) and the sequences of the covering relations (\ref{eq:R-Q-coverings}) and (\ref{eq:gluing}).
Time reversal symmetry of the system implies the existence of the sequence of the covering relations
\[
    \mathcal{W}_n \rightarrow \mathcal{N}_2 \quad\text{and}\quad \mathcal{Q}_{4k} \rightarrow \mathcal{N}_2.
\]

For each set of orbits $\mathcal{O}^+_H, \mathcal{O}^+_P, \mathcal{O}^+_{OI}, \mathcal{O}^+_{OC}, \mathcal{O}^+_{OB}, \mathcal{O}^+_B, \mathcal{O}^+_C$ there exists a sequence of covering relations which begins at the h-set $\mathcal{N}_2$, and which implies the existence of the trajectory which belongs to that set of orbits:
\begin{itemize}
    \item for $\mathcal{O}_H^+$:
    \[
        \mathcal{N}_2 \xRightarrow{P} \mathcal{N}_{19} \xRightarrow{P} \mathcal{N}_{20} \xRightarrow{P} ... \xRightarrow{P} \mathcal{N}_{29} \xRightarrow{P}
        \mathcal{M}_0,
    \]
    which is the consequence of Theorem \ref{thm:covrel_periodic_to_infinity}, Theorem \ref{thm:covrel_jump_to_infinity} and Theorem \ref{lem:hyperbolic_evolution};
    \item for $\mathcal{O}_P^+$:
    \[
        \mathcal{N}_2 \xRightarrow{P} \mathcal{N}_{19} \xRightarrow{P} \mathcal{N}_{20} \xRightarrow{P} ... \xRightarrow{P} \mathcal{N}_{29} \xRightarrow{P}
        \mathcal{M}_0 \xRightarrow{P} \mathcal{M}_1 \xRightarrow{P} ...
    \]
    which is the consequence of Theorem \ref{thm:covrel_periodic_to_infinity}, Theorem \ref{thm:covrel_jump_to_infinity} and Corollary \ref{col:covrel_M_W};
    \item for $\mathcal{O}_{OI}^+$:
    \[
        \mathcal{N}_2 \rightarrow \mathcal{W}_1 \rightarrow \mathcal{N}_2 \rightarrow \mathcal{W}_2 \rightarrow \mathcal{N}_2 \rightarrow \mathcal{W}_3 \rightarrow ...
    \]
    \item for $\mathcal{O}_{OC}^+$:
    \[
        \mathcal{N}_2 \rightarrow \mathcal{Q}_4 \rightarrow \mathcal{N}_2 \rightarrow \mathcal{Q}_8 \rightarrow \mathcal{N}_2 \rightarrow \mathcal{Q}_{12} \rightarrow ...
    \]
    \item for $\mathcal{O}_{OB}^+$:
    \[
        \mathcal{N}_2 \rightarrow \mathcal{W}_1 \rightarrow \mathcal{N}_2 \rightarrow \mathcal{Q}_4 \rightarrow \mathcal{N}_2 \rightarrow \mathcal{W}_2 \rightarrow \mathcal{N}_2 \rightarrow \mathcal{Q}_8 \rightarrow ...
    \]
    \item for $\mathcal{O}_B^+$:
    \[
        \mathcal{N}_2 \xRightarrow{P} ... \xRightarrow{P} \mathcal{N}_K \xRightarrow{P} ... \xRightarrow{P} \mathcal{N}_2 \xRightarrow{P} ... ,
    \]
    which the combination of the suitable contiguous subsequences of (\ref{eq:covering_sequence_o1}) and (\ref{eq:covering_sequence_o2})
    \item for $\mathcal{O}_C^+$:
    \[
        \mathcal{N}_2 \xRightarrow{P} \mathcal{N}_3 \xRightarrow{P} \mathcal{N}_0,
    \]
    what is the consequence of Theorem \ref{thm:cover-discs} and the fact that the collision manifold is both a horizontal and a vertical disc of the h-sets $\mathcal{N}_0$ (see Definitions \ref{def:horizontal-disc} and \ref{def:vertical-disc}).
\end{itemize}

The time reversal symmetry of the system implies that for each set of orbits $\mathcal{O}^-_H$, $\mathcal{O}^-_P$, $\mathcal{O}^-_{OI}$, $\mathcal{O}^-_{OC}$, $\mathcal{O}^-_{OB}$, $\mathcal{O}^-_B$, $\mathcal{O}^-_C$ there exists a sequence of covering relations which ends at $\mathcal{N}_2$, and which implies the existence of the trajectory which belongs to that set of orbits.

An arbitrary sequence from the first group can be connected to an arbitrary set from the second group which implies the existence of the orbit which belongs to $\mathcal{O}_X^- \cap \mathcal{O}_Y^+$ for any $X,Y \in \{H,P,OI,OC,OB,B,C\}$. 

The sequence of the covering relations
\[
    \mathcal{W}_n \rightarrow \mathcal{N}_2 \rightarrow \mathcal{Q}_{4k} \rightarrow \mathcal{N}_2 \rightarrow \mathcal{W}_n
\]
implies the existence of the periodic orbit, which for the sufficiently large $n$ and $k$ attains an arbitrarily small but positive distance from the collision and an arbirarily large distance from the collision.

The sequence of the covering relations
\[
    \mathcal{N}_0 \xRightarrow{P}
    \mathcal{N}_1 \xRightarrow{P}
    \mathcal{N}_2 \rightarrow \mathcal{W}_n \rightarrow
    \mathcal{N}_2 \xRightarrow{P}
    \mathcal{N}_3 \xRightarrow{P}
    \mathcal{N}_0
\]
implies the existence of the ejection/collision orbit which for the sufficiently large $n$ is arbitrarily large (i.e. reaches arbitrarily far from the center of mass of the system), which concludes our proof.

\section{Conclusions}
We have shown an explicit construction, which leads the connections between the forward and backward time motions exhibiting: a collision, spatially bounded oscillatory motion to collision, spatially bounded motion away from collision, oscillatory motion to infinity bounded away from the collision, oscillatory motion to both collision and infinity, parabolic motion to infinity and the hyperbolic motion to infinity in the Earth-Moon planar circular restricted three body problem. The construction was based on exploiting the homoclinic connections for the family of Lyapunov orbits, which persist in the Levi-Civita coordinates for the ejection/collision orbit and the heteroclinic connection from the ejection/collision orbit to the limit set of the parabolic motion, which persists in the McGehee coordinates. 

We use the Earth-Moon system as an example, but we believe that the method is applicable to other mass parameters, provided that the Lyapunov family collides with one of the primaries. We emphasise though that as the mass parameter is changed, the computer assisted proof needs to be re-run and most likely adjusted. In particular, the method of covering relations relies on the good alignment of the local coordinates, which changes with the change of parameter.

This work is the realization of the target which was set in \cite{CAPINSKI2026109173}, where we discussed about the possibility of the combination of the results contained in that paper with the results from \cite{MR4391693}, which involved the hyperbolic, parabolic and oscillatory motions to infinity. As a consequence, we extend the results from \cite{MR4890942}, which are applicable to sufficiently small $\mu$. Such outcome goes towards the understanding the evolution of the three-body problem over long timescales, which is an important question in dynamical systems theory.

\appendix

\section{Coordinate systems}
\label{appendix:coordinate_systems}

Let $\epsilon:= 8.5 \cdot 10^{-10}$. This coefficient will play the role of a rescaling factor for our coordinate changes. Its role is to have the h-sets $\mathcal{N}_0,\ldots, \mathcal{N}_{29}$ in the local coordinates given by $\psi_0,\ldots, \psi_{29}$, respectively, to be of the same size and of the form $N_c=[-1,1]^2\subset \mathbb{R}^2$. In short, the h-sets in the local coordinates will be of order one, but their size in the coordinates of the PCR3BP will be of order $10^{-10}$.
 
Let $\mathcal{U}$ denote the vector normalization i.e. $\mathcal{U}(w) = w / \| w \|$.  We choose the matrices $A_k\in\mathbb{R}^{4\times4}$ for the definition of the coordinate changes $\psi_k$ (see (\ref{eq:local-parametrization-choice}) and (\ref{eq:A-form})) to be
\begin{equation}
    A_k:= \, \left[
        \epsilon\,\hat{u}_k \quad
        \epsilon\,\hat{s}_k \quad
        \epsilon\,\mathcal{U} \left( J\nabla \Gamma(w_k) \right) \quad
        \epsilon\,\mathcal{U} \left( \nabla \Gamma(w_k) \right) \right], \quad \mbox{for }k=1,\ldots,29, \label{eq:Ak-choice}
\end{equation}
where the vectors $w_k$, $\hat{u}_k$ and $\hat{s}_k$ are chosen as written out in Tables \ref{table:wk}, \ref{table:uk} and \ref{table:sk}, respectively. Note that the choice of $A_k$ in (\ref{eq:Ak-choice}) is of the form (\ref{eq:A-form}); with a rescaling of the columns. The matrices $A_{19}, ..., A_{29}$ are chosen according to (\ref{eq:A_19_29}).

We use the matrices $A_k$ from (\ref{eq:Ak-choice}) to define $\psi_k$ using (\ref{eq:local-parametrization-choice}) for $k=1,\ldots,29$.

\begin{table}[H]
    \centering
    \scriptsize
    \begin{tabular}{lllll}
        \hline
         & $u$ & $\phantom{-}v$ & $\phantom{-}p_u$ & $\phantom{-}p_v$ \\
        \hline
        $w_{0}$ & $\phantom{-}0$ & $\phantom{-}0$ & $\phantom{-}0$ & $\phantom{-}2.81112771399$ \\
        $w_{1}$ & $\phantom{-}1.01939532911$ & $\phantom{-}0.0352455355111$ & $-1.57992203844$ & $-0.220188078268$ \\
        $w_{2}$ & $\phantom{-}0.997511358555$ & $\phantom{-}0$ & $\phantom{-}0$ & $-3.10035066234$ \\
        $w_{3}$ & $\phantom{-}1.01939532911$ & $-0.0352455355111$ & $\phantom{-}1.57992203844$ & $-0.220188078268$ \\
        $w_{4}$ & $\phantom{-}1.82311329298$e-10 & $\phantom{-}0$ & $\phantom{-}3.0389849377$e-09 & $\phantom{-}2.81112771399$ \\
        $w_{5}$ & $\phantom{-}1.01939364734$ & $\phantom{-}0.0352429461228$ & $-1.57993713742$ & $-0.220199770725$ \\
        $w_{6}$ & $\phantom{-}0.997511359537$ & $\phantom{-}0$ & $\phantom{-}3.81728719867$e-07 & $-3.1003514216$ \\
        $w_{7}$ & $\phantom{-}1.01939365573$ & $-0.0352429413345$ & $\phantom{-}1.57993769304$ & $-0.22019938229$ \\
        $w_{8}$ & $\phantom{-}1.23284045903$e-07 & $\phantom{-}0$ & $\phantom{-}2.05626985632$e-06 & $\phantom{-}2.81112771099$ \\
        $w_{9}$ & $\phantom{-}1.01939456403$ & $\phantom{-}0.0352429632775$ & $-1.57993456608$ & $-0.220196017823$ \\
        $w_{10}$ & $\phantom{-}0.997512022626$ & $\phantom{-}0$ & $\phantom{-}0.000258254271245$ & $-3.10086432733$ \\
        $w_{11}$ & $\phantom{-}1.01940023627$ & $-0.0352397234978$ & $\phantom{-}1.58031042535$ & $-0.219933213027$ \\
        $w_{12}$ & $\phantom{-}8.34102256631$e-05 & $\phantom{-}0$ & $\phantom{-}0.00139148280123$ & $\phantom{-}2.8111253283$ \\
        $w_{13}$ & $\phantom{-}1.02000949836$ & $\phantom{-}0.0352470440096$ & $-1.57821685002$ & $-0.21769836291$ \\
        $w_{14}$ & $\phantom{-}0.997948108217$ & $\phantom{-}0$ & $\phantom{-}0.196225509832$ & $-3.49133960833$ \\
        $w_{15}$ & $\phantom{-}1.023984317$ & $-0.0326239190489$ & $\phantom{-}1.83944484044$ & $-0.0123000584495$ \\
        $w_{16}$ & $\phantom{-}0.0828113236996$ & $-1.13042936454$ & $\phantom{-}0.0931363222648$ & $\phantom{-}1.32036952924$ \\
        $w_{17}$ & $\phantom{-}0.0742904549562$ & $\phantom{-}0$ & $\phantom{-}1.24626303185$ & $\phantom{-}2.51265379059$ \\
        $w_{18}$ & $\phantom{-}1.26583072872$ & $\phantom{-}0$ & $\phantom{-}0$ & $\phantom{-}0.120135068527$ \\
        $w_{19}$ & $\phantom{-}1.01946005922$ & $-0.0353451461152$ & $\phantom{-}1.57934194996$ & $-0.219739806648$ \\
        $w_{20}$ & $\phantom{-}8.05409106162$e-10 & $\phantom{-}0.000120876807692$ & $-2.93479781138$e-06 & $\phantom{-}2.81112769945$ \\
        $w_{21}$ & $\phantom{-}1.01933056828$ & $\phantom{-}0.0351459199114$ & $-1.5805038784$ & $-0.220639737798$ \\
        $w_{22}$ & $\phantom{-}0.997514967299$ & $-0.00021753533232$ & $\phantom{-}0.16783494776$ & $-3.09301169811$ \\
        $w_{23}$ & $\phantom{-}1.01946010464$ & $-0.0353451286972$ & $\phantom{-}1.57934436095$ & $-0.219738054317$ \\
        $w_{24}$ & $\phantom{-}5.44613493013$e-07 & $\phantom{-}0.000118282132727$ & $\phantom{-}6.19225850923$e-06 & $\phantom{-}2.81112768679$ \\
        $w_{25}$ & $\phantom{-}1.01933301393$ & $\phantom{-}0.0351435326991$ & $-1.58050695231$ & $-0.220634296622$ \\
        $w_{26}$ & $\phantom{-}0.997518133399$ & $-0.000223007181604$ & $\phantom{-}0.173506906544$ & $-3.09488025629$ \\
        $w_{27}$ & $\phantom{-}1.01949083414$ & $-0.0353333277483$ & $\phantom{-}1.5809755598$ & $-0.218551655939$ \\
        $w_{28}$ & $\phantom{-}0.000364884317097$ & $-0.00163993493724$ & $\phantom{-}0.00619232212021$ & $\phantom{-}2.81110927758$ \\
        $w_{29}$ & $\phantom{-}1.02098553866$ & $\phantom{-}0.0335275406906$ & $-1.58257591259$ & $-0.21661007693$ \\
    \end{tabular}
    \caption{Approximate values of the components of the vectors $w_k$.\label{table:wk}}
\end{table}

\begin{table}[H]
    \scriptsize
    \centering
    \begin{tabular}{lllll}
        \hline
         & $\phantom{-}u$ & $\phantom{-}v$ & $\phantom{-}p_u$ & $\phantom{-}p_v$ \\
        \hline
        $\hat{u}_{0}$ & $\phantom{-}0.0598653990819$ & $\phantom{-}0.0243392345095$ & $\phantom{-}0.99790861489$ & $-0.00146013168443$ \\
        $\hat{u}_{1}$ & $\phantom{-}0.0512722628998$ & $-0.0539645641977$ & $-0.0790749932687$ & $\phantom{-}0.106664891545$ \\
        $\hat{u}_{2}$ & $\phantom{-}0.0123990872795$ & $\phantom{-}0.00623386910969$ & $\phantom{-}8.06085056107$e-06 & $-9.58885003297$ \\
        $\hat{u}_{3}$ & $\phantom{-}0.0655510635754$ & $-0.0519328405322$ & $\phantom{-}0.994851864788$ & $\phantom{-}1.26355518097$ \\
        $\hat{u}_{4}$ & $\phantom{-}0.0598653990819$ & $\phantom{-}0.0243392345095$ & $\phantom{-}0.99790861489$ & $-0.00146013168443$ \\
        $\hat{u}_{5}$ & $\phantom{-}0.0625757321266$ & $-0.0658560755461$ & $-0.0965279955578$ & $\phantom{-}0.13018905303$ \\
        $\hat{u}_{6}$ & $\phantom{-}0.0184674816754$ & $\phantom{-}0.00928486590201$ & $\phantom{-}1.20031980776$e-05 & $-14.2818597745$ \\
        $\hat{u}_{7}$ & $\phantom{-}0.119128113811$ & $-0.0943647041922$ & $\phantom{-}1.80852788038$ & $\phantom{-}2.2967467384$ \\
        $\hat{u}_{8}$ & $\phantom{-}0.132804359411$ & $\phantom{-}0.0539941945251$ & $\phantom{-}2.21374370854$ & $-0.0032407835347$ \\
        $\hat{u}_{9}$ & $\phantom{-}0.138815104952$ & $-0.146096387381$ & $-0.214131083267$ & $\phantom{-}0.288817842031$ \\
        $\hat{u}_{10}$ & $\phantom{-}0.0409636350452$ & $\phantom{-}0.0205951178031$ & $\phantom{-}2.24064629998$e-05 & $-31.6930288739$ \\
        $\hat{u}_{11}$ & $\phantom{-}0.264355726553$ & $-0.209336295197$ & $\phantom{-}4.01213882195$ & $\phantom{-}5.0966647523$ \\
        $\hat{u}_{12}$ & $\phantom{-}0.294792860317$ & $\phantom{-}0.120543706008$ & $\phantom{-}4.91486295091$ & $-0.00967322389548$ \\
        $\hat{u}_{13}$ & $\phantom{-}0.305475938325$ & $-0.327843230549$ & $-0.468071313644$ & $\phantom{-}0.654208911006$ \\
        $\hat{u}_{14}$ & $\phantom{-}0.0842120960402$ & $\phantom{-}0.0422016823499$ & $-0.00607062017782$ & $-89.0241271111$ \\
        $\hat{u}_{15}$ & $\phantom{-}0.731763394696$ & $-0.458286780463$ & $\phantom{-}8.89272504938$ & $\phantom{-}13.9801270312$ \\
        $\hat{u}_{16}$ & $-3.39472774076$ & $-2.60519046749$ & $\phantom{-}6.3830932378$ & $-4.93809685454$ \\
        $\hat{u}_{17}$ & $\phantom{-}0.504441646028$ & $\phantom{-}0.387143559261$ & $\phantom{-}4.32240508069$ & $-2.22878232827$ \\
        $\hat{u}_{18}$ & $\phantom{-}0.0497937735344$ & $-0.534053889757$ & $\phantom{-}0.326462982573$ & $\phantom{-}0.0814565811784$ \\
        \hline
    \end{tabular}
    \caption{Approximate values of the components of the vectors $\hat{u}_k$.\label{table:uk}}
\end{table}

\begin{table}[H]
	\scriptsize
    \centering
    \begin{tabular}{lllll}
        \hline
         & $\phantom{-}u$ & $\phantom{-}v$ & $\phantom{-}p_u$ & $\phantom{-}p_v$ \\
        \hline
        $\hat{s}_{0}$ & $\phantom{-}0.0598653990819$ & $-0.0243392345095$ & $-0.99790861489$ & $-0.00146013168443$ \\
        $\hat{s}_{1}$ & $\phantom{-}0.0655510635754$ & $\phantom{-}0.0519328405322$ & $-0.994851864788$ & $\phantom{-}1.26355518097$ \\
        $\hat{s}_{2}$ & $\phantom{-}0.0123990872795$ & $-0.00623386910969$ & $-8.06085056107$e-06 & $-9.58885003297$ \\
        $\hat{s}_{3}$ & $\phantom{-}0.0512722628998$ & $\phantom{-}0.0539645641977$ & $\phantom{-}0.0790749932687$ & $\phantom{-}0.106664891545$ \\
        $\hat{s}_{4}$ & $\phantom{-}0.0598653990819$ & $-0.0243392345095$ & $-0.99790861489$ & $-0.00146013168443$ \\
        $\hat{s}_{5}$ & $\phantom{-}0.0536845559625$ & $\phantom{-}0.042525059308$ & $-0.815005301074$ & $\phantom{-}1.03501866893$ \\
        $\hat{s}_{6}$ & $\phantom{-}0.00832229054357$ & $-0.00418418326664$ & $-5.41171804734$e-06 & $-6.4360581837$ \\
        $\hat{s}_{7}$ & $\phantom{-}0.0281994747078$ & $\phantom{-}0.0296777546137$ & $\phantom{-}0.0434999318962$ & $\phantom{-}0.058669162844$ \\
        $\hat{s}_{8}$ & $\phantom{-}0.026978030238$ & $-0.0109685022423$ & $-0.449703262016$ & $-0.000657678209258$ \\
        $\hat{s}_{9}$ & $\phantom{-}0.0242011480547$ & $\phantom{-}0.0191715734179$ & $-0.367373745058$ & $\phantom{-}0.466569759349$ \\
        $\hat{s}_{10}$ & $\phantom{-}0.0037506919161$ & $-0.00188532541287$ & $-2.82255343343$e-06 & $-2.9018616866$ \\
        $\hat{s}_{11}$ & $\phantom{-}0.0127077970338$ & $\phantom{-}0.0133767067373$ & $\phantom{-}0.0196090894894$ & $\phantom{-}0.0264540615654$ \\
        $\hat{s}_{12}$ & $\phantom{-}0.0121439073914$ & $-0.0049838504505$ & $-0.202713054229$ & $-0.000197805387775$ \\
        $\hat{s}_{13}$ & $\phantom{-}0.0112900526678$ & $\phantom{-}0.00929003414053$ & $-0.161456580967$ & $\phantom{-}0.211692282532$ \\
        $\hat{s}_{14}$ & $\phantom{-}0.00144550242937$ & $-0.000618072920234$ & $-0.000105478869183$ & $-1.52819846002$ \\
        $\hat{s}_{15}$ & $\phantom{-}0.00455638237545$ & $\phantom{-}0.00563436828252$ & $\phantom{-}0.00813263074882$ & $\phantom{-}0.0134836469778$ \\
        $\hat{s}_{16}$ & $\phantom{-}0.0299785746856$ & $\phantom{-}0.00891662895616$ & $-0.0343147668569$ & $\phantom{-}0.0340399244557$ \\
        $\hat{s}_{17}$ & $-0.026049583212$ & $-0.0475477618468$ & $-0.409735409863$ & $\phantom{-}0.20805798649$ \\
        $\hat{s}_{18}$ & $\phantom{-}0.0497937735344$ & $\phantom{-}0.534053889757$ & $-0.326462982573$ & $\phantom{-}0.0814565811784$ \\
        \hline
    \end{tabular}
    \caption{Approximate values of the components of the vectors $\hat{s}_k$.}
    \label{table:sk}
\end{table}

\bibliographystyle{ieeetr} 
\bibliography{references}

\end{document}